\documentclass[11pt]{amsart}

\usepackage{amsmath, amsthm, amssymb}

\usepackage[dvipsnames]{xcolor}
\definecolor{darkgreen}{RGB}{0,150,0}
\definecolor{darkred}{RGB}{200,0,0}
\definecolor{darkblue}{RGB}{0,0,220}
\definecolor{gold}{RGB}{255,218,9}
\usepackage[colorlinks, linkcolor=darkblue, citecolor=darkgreen, urlcolor=darkgreen]{hyperref}

\usepackage{palatino}
\usepackage[T1]{fontenc}  

\usepackage{setspace}
\usepackage[margin=1.05in]{geometry}
             
\usepackage{comment}
\usepackage{thmtools, thm-restate}
\usepackage{overpic}
\usepackage{subcaption}
\usepackage{tikz}

\usepackage{aliascnt}
\usepackage[noabbrev,capitalize,nameinlink]{cleveref}

\newtheorem{theorem}{Theorem}[section]
\newtheorem{proposition}[theorem]{Proposition}
\newtheorem{lemma}[theorem]{Lemma}
\newtheorem{corollary}[theorem]{Corollary}
\newtheorem{conjecture}[theorem]{Conjecture}
\theoremstyle{definition}
\newtheorem{definition}[theorem]{Definition}
\newtheorem{example}[theorem]{Example}
\newtheorem{question}[theorem]{Question}
\theoremstyle{remark}
\newtheorem{remark}[theorem]{Remark}
\numberwithin{equation}{section}

\usepackage{mleftright}

\newcommand{\R}{\mathbb{R}}

\makeatletter 
 \def\l@subsection{\@tocline{2}{0pt}{2pc}{6pc}{}} \makeatother

\begin{document}

\title{Hard Legendrian unknots}

\author{Joseph Breen}
\address{University of Alabama, Tuscaloosa, AL 35401}
\email{jjbreen@ua.edu} \urladdr{https://sites.google.com/view/joseph-breen}

\author{Austin Christian}
\address{California Polytechnic State University, San Luis Obispo, CA 93407}
\email{achris66@calpoly.edu} \urladdr{https://sites.google.com/view/austin-christian}

\author{Angela Wu}
\address{Bucknell University, Lewisburg, PA 17837}
\email{ajw031@bucknell.edu} 
\urladdr{https://angelamath.com/}

\thanks{JB was partially supported by NSF Grant DMS-2038103 and an AMS-Simons Travel Grant. AC was partially supported by NSF Grant DMS-2532551 and an AMS-Simons Travel Grant.}

\begin{abstract}
    We initiate the study of Reidemeister hardness of Legendrian unknot front projections. Using normal rulings, we obstruct several infinite families of hard unknot diagrams from being drawn with max-tb unknot fronts, along with 1.7 million of the 2.6 million hard unknot diagrams studied in \cite{applebaum2024unknottingnumberhardunknot}. We construct infinitely many smoothly hard max-tb unknot diagrams, and bound their minimum possible writhe. With respect to these bounds, our constructions are conjecturally sharp.
\end{abstract}

\maketitle

\tableofcontents

\section{Introduction}

The study of hard diagrams of the unknot is a fixture of knot theory, dating back to the origins of the subject \cite{goeritz1934knot}. It has evolved throughout the years via cornerstone theoretical results \cite{hass2001reidemeister,hass2010quadratic,lackenby2015polynomial}, and continues today through modern techniques of computational analysis and reinforcement learning \cite{petronio2016unknots,applebaum2024unknottingnumberhardunknot,burton2024hard,lunel2024harddiagramssplitlinks}. 

More generally, the study of complexity measures for which the unknotting problem admits a monotonic solution has a rich history. In the 2010s, Henrich and Kauffman \cite{henrich2014unknotting} exhibited a quadratic upper bound on crossing number increase for hard unknot diagrams. They leveraged work of Dynnikov \cite{dynnikov2006arc} from the early 2000s, who showed that grid diagrams for the unknot can be simplified monotonically in grid dimension with a finite set of moves. In turn, Dynnikov was inspired by the work of Birman and Menasco \cite{birman1992unlink} in the 1990s and of Bennequin \cite{bennequin1983entrelacements} in the 1980s. The latter used contact topology and its imprint on Seifert surfaces to study braids, while the former proved a Markov-type theorem without stabilizations for braid representations of the unknot, marking the first complexity non-increasing unknotting theorem. 

There are various notions of Reidemeister hardness that have been studied in the literature, and we begin by establishing some clarifying language. Given a knot diagram $D$, let $\mathrm{cr}(D)$ denote its crossing number. If $\{D_i\}_{i=0}^n$ is a sequence of diagrams, we let $\overline{\mathrm{cr}}(\{D_i\}):= \sup_{i} \mathrm{cr}(D_i)$. By a \emph{planar} (resp.\ \emph{spherical}) \emph{unknotting Reidemeister sequence} we mean a sequence of diagrams $\{D_i\}_{i=0}^n$ such that $D_n$ is the trivial diagram of the unknot and, up to planar (resp.\ spherical) isotopy, $D_{i+1}$ is obtained from $D_i$ by a Reidemeister move (see \cref{fig:reid}). 

\begin{figure}[ht]
	\centering
    
	\begin{overpic}[scale=.3]{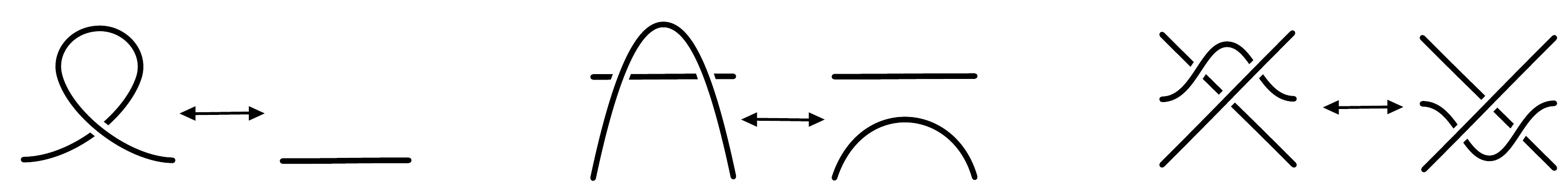}
      \put(12.5,9){\small RI}
      \put(48,9){\small RII}
      \put(85,9){\small RIII}
	\end{overpic}
	\caption{Reidemeister moves, up to rotations and mirroring.}
	\label{fig:reid}
\end{figure}

\begin{definition}\label{def:hardness}
Let $D$ be a diagram of the unknot.  
\begin{enumerate}
    \item We say that $D$ is \emph{weakly hard} if, in any unknotting Reidemeister sequence $\{D_i\}$ with $D_0 = D$, there exists an $i$ such that $\mathrm{cr}(D_i) < \mathrm{cr}(D_{i+1})$. 

    \item We say that $D$ is \emph{strongly hard} if any unknotting Reidemeister sequence $\{D_i\}$ with $D_0 = D$ satisfies $\overline{\mathrm{cr}}(\{D_i\}) > \mathrm{cr}(D)$.
\end{enumerate}
When the distinction is important, we include the adjective \emph{planar} or \emph{spherically} to indicate we are considering unknotting Reidemeister sequences on the plane or sphere. If a distinction between weak and strong hardness is unimportant, we will simply say \emph{hard}.
\end{definition}

In other words, weak hardness means that there is no unknotting Reidemeister sequence which is monotonically non-increasing in crossing number, while strong hardness means that the crossing number must increase before it can decrease. Note that strong hardness implies weak hardness, and that one may produce a weakly but not strongly hard diagram by performing crossing-increasing Reidemeister moves on a strongly hard diagram. 

The earliest example of a hard unknot diagram, drawn in \cref{fig:goeritz}, was discovered by Goeritz in 1934 \cite{goeritz1934knot}. By inspecting each region of the diagram, including the compactified outer region, one can verify that there are no available Reidemeister moves which preserve or decrease the crossing number. Taking for granted that the Goeritz knot is in fact the unknot, it follows that the diagram is strongly spherically hard. Over time, many examples have been discovered and their complexities studied; see \cite{burton2024hard} for a recent survey and \cite{applebaum2024unknottingnumberhardunknot} for a dataset with 2.6 million diagrams. 

\begin{figure}[ht]
	\centering
	\begin{overpic}[scale=.21]{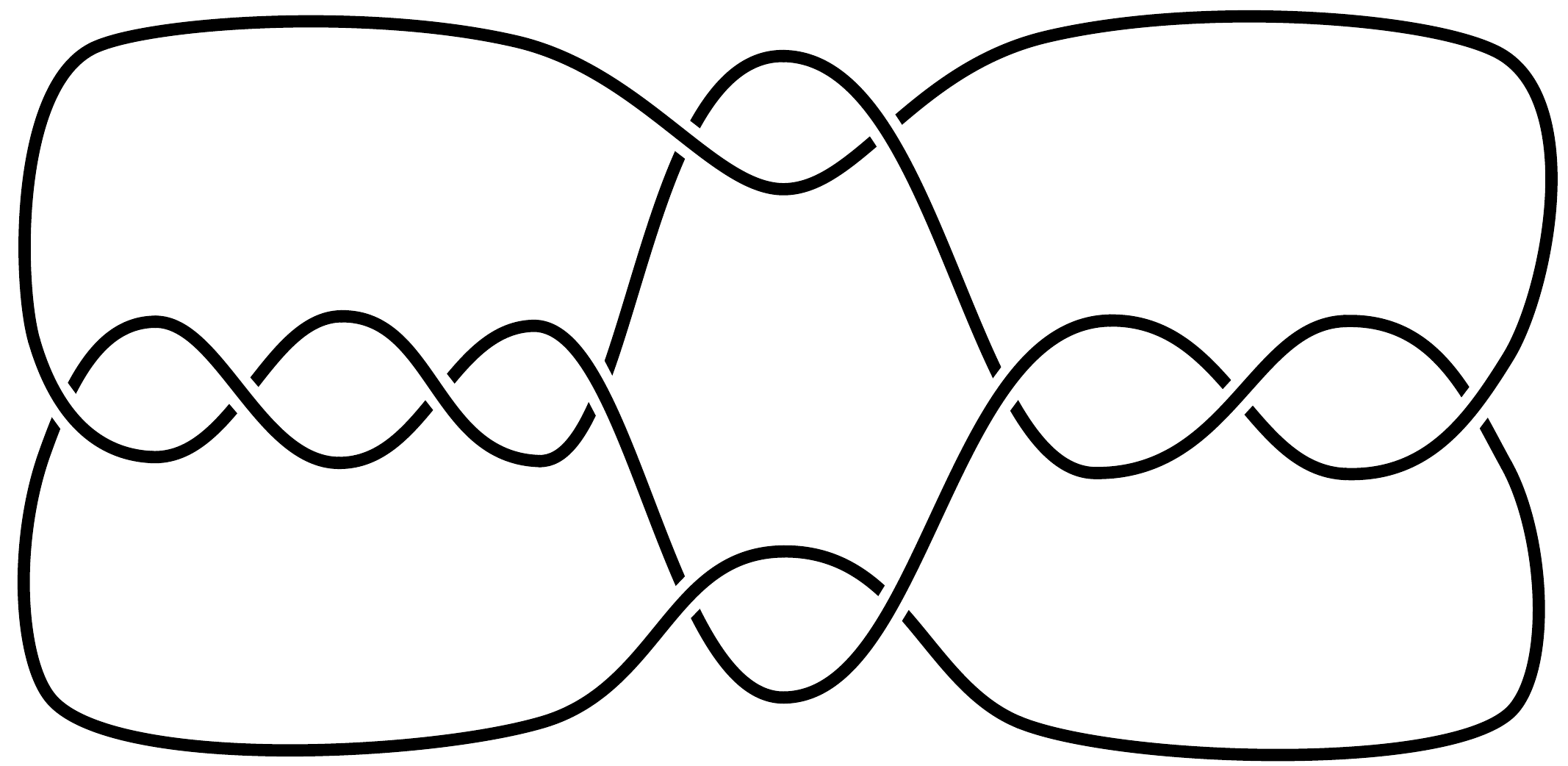}
 
	\end{overpic}
	\caption{The Goeritz unknot.}
	\label{fig:goeritz}
\end{figure}

\subsection{Legendrian knots}\label{subsec:leg_knots}

Consider $\R^3$ with the contact structure $\xi = \ker (dz - y\, dx)$. A knot $\Lambda \subset \R^3$ is \emph{Legendrian} if it is everywhere tangent to $\xi$. Legendrian knot theory is central to modern contact and symplectic topology, leading to rich theories of surgery \cite{gompf1998handlebody,ding2001symplectic,casals2024steintrace}, classification \cite{etnyre2001knots,chekanov2002dga,chongchitmate2013atlas}, cobordism \cite{chantraine2010concordance,ekholm2012exactcobordisms,bourgeois2015cobordisms,casals2022infinitely}, and more \cite{etnyre2005surveyknots}.

A Legendrian knot is completely determined by its \emph{front projection} $\mathcal{F}$ to the $(x,z)$-plane, as the $y$-coordinate can be recovered from $y=\tfrac{dz}{dx}$. Generic front projections have semicubical cusps in place of vertical tangencies, and strands with more negative $(x,z)$-slope always cross over strands with more positive $(x,z)$-slope. Any two front projections of the same Legendrian knot are related by a set of \emph{Legendrian Reidemeister moves}; see \cref{fig:reid_leg}. 

\begin{figure}[ht]
	\centering
	\begin{overpic}[scale=.3]{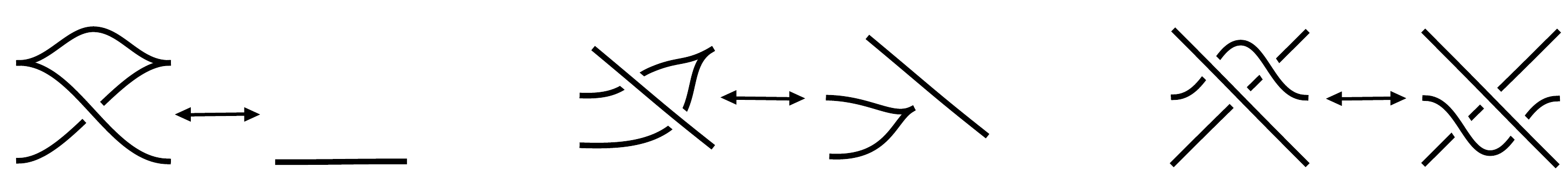}
\put(12.5,9){\small RI}
      \put(47,9){\small RII}
      \put(85,9){\small RIII}
	\end{overpic}
	\caption{Legendrian Reidemeister moves, up to natural symmetries.}
	\label{fig:reid_leg}
\end{figure}

A Legendrian knot $\Lambda$ has three classical invariants: its topological knot type, its \emph{Thurston-Bennequin invariant} $\mathrm{tb}(\Lambda)$, and its \emph{rotation number} $\mathrm{rot}(\Lambda)$. After the knot type, the former is the integral framing induced by the contact structure, and can be computed combinatorially from the front projection $\mathcal{F}$ as follows: 
\begin{align*}
\mathrm{tb}(\Lambda) &= \mathrm{writhe}(\mathcal{F}) - \frac{1}{2}(\#\text{ of cusps})\\ 
&= (\#\text{ of pos. crossings}) - (\#\text{ of neg. crossings}) - \frac{1}{2}(\#\text{ of cusps}).     
\end{align*}
Every knot type $K$ has a maximum Thurston-Bennequin invariant $\overline{\mathrm{tb}}(K)$, a consequence of the Bennequin inequality $\mathrm{tb}(\Lambda) + |\mathrm{rot}(\Lambda)| \leq - \chi(\Sigma)$ where $\Sigma$ is any Seifert surface \cite{bennequin1983entrelacements,eliashberg1992contact}. Consequently, the maximum Thurston-Bennequin invariant of the unknot is $-1$, witnessed by the front projection with two cusps and no crossings. 

\begin{figure}[ht]
	\centering
    \vskip-0.5cm
	\begin{overpic}[scale=.25]{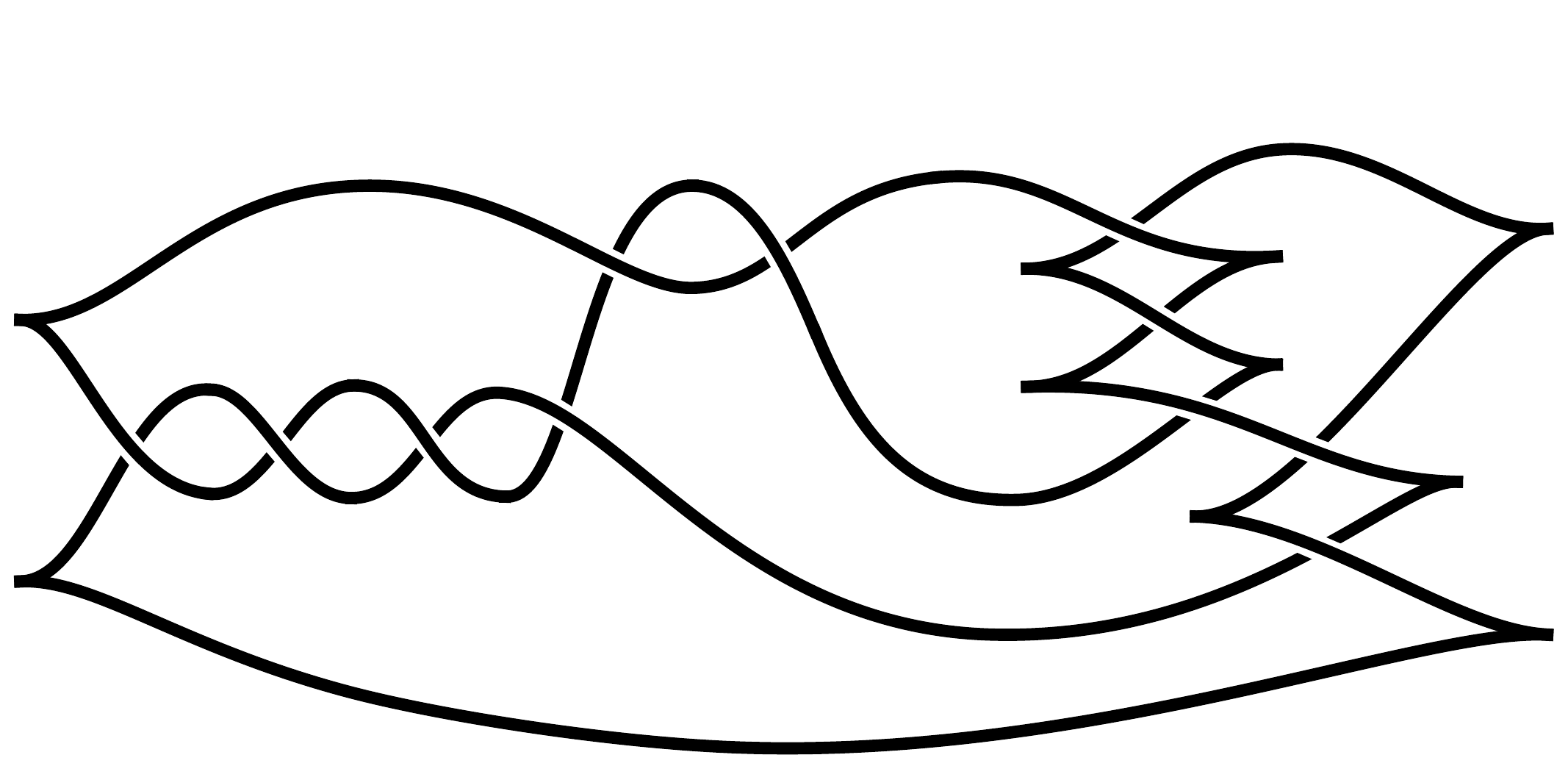}
 
	\end{overpic}
	\caption{A Legendrian realization of the Goeritz unknot with $\mathrm{tb} = -4$.}
	\label{fig:goeritz_leg}
\end{figure}

\subsection{Hard Legendrian unknots}\label{subsec:hard_leg_un}
Given a smooth knot diagram $D$, it is straightforward to find a Legendrian representative of the knot with a front projection planar isotopic to $D$. For example, after a planar isotopy of $D$ we may assume that all overcrossings have negative slope, all undercrossings have positive slope, and all vertical tangencies are nondegenerate. Turning every vertical tangency into a cusp produces a front projection for a Legendrian representative $\Lambda$ of the knot type. \cref{fig:goeritz_leg} gives a Legendrian representative of the Goeritz unknot. Another example is the Legendrian representative of Adams' ``nasty unknot" \cite[Figure 1.29]{adams1994knot} given in \cref{fig:nasty}. 

\begin{figure}[ht]
	\centering
    \vskip-.7cm
	\begin{overpic}[scale=.24]{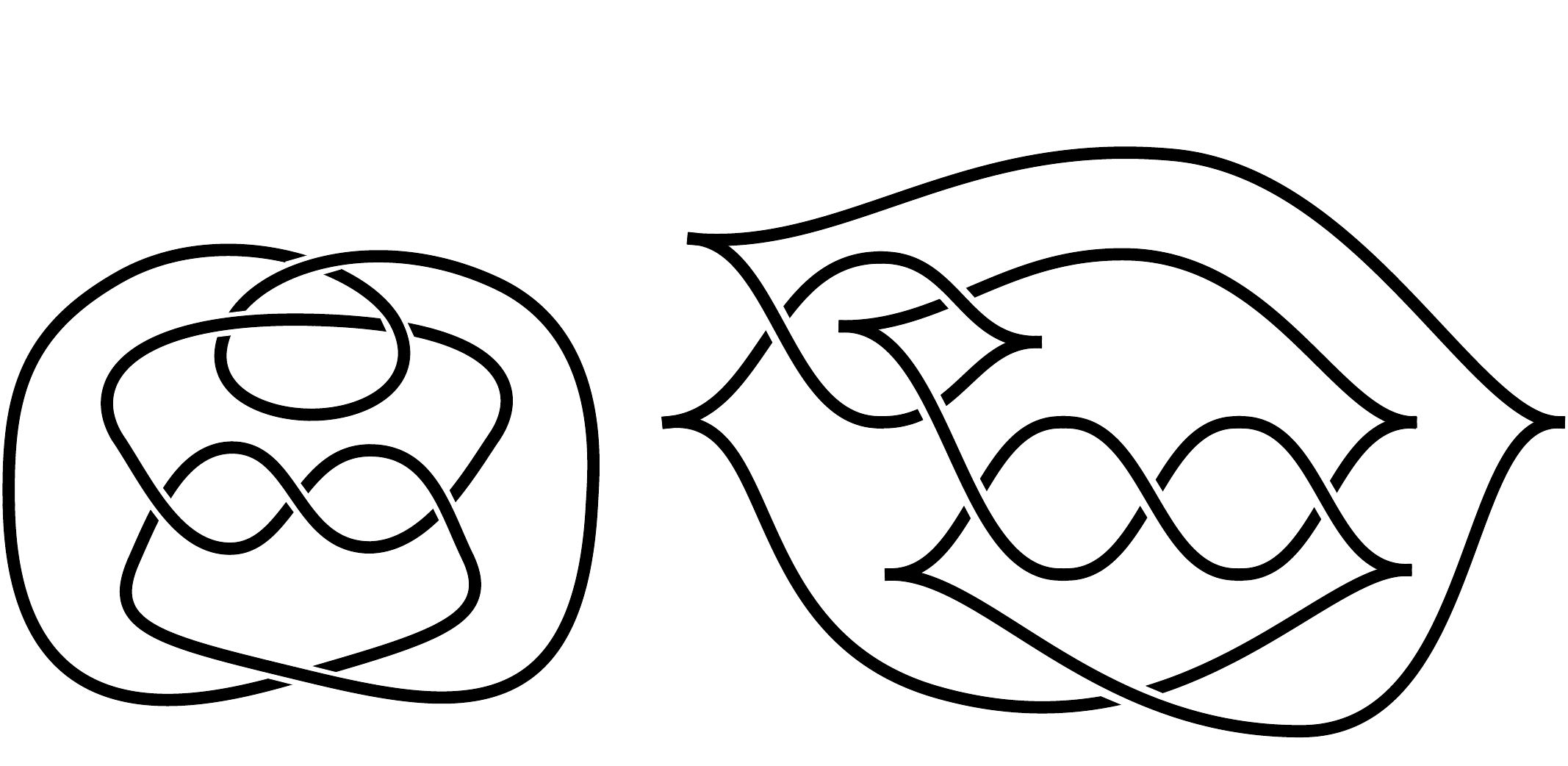}
 
	\end{overpic}
	\caption{Adams' nasty unknot on the left, and a Legendrian realization with $\mathrm{tb} = -3$ on the right. (The smooth diagram is strongly hard on the plane, but not even weakly hard on the sphere.)}
	\label{fig:nasty}
\end{figure}

Neither of these Legendrian unknots achieve the maximum Thurston-Bennequin invariant of $-1$: the Legendrian Goeritz unknot has $\mathrm{tb} = -4$, and the Legendrian nasty unknot has $\mathrm{tb} = -3$. In general, as the reader may freely experiment with and confirm, approximating known hard diagrams (c.f. \cite{burton2024hard}) by fronts with near-maximal Thurston-Bennequin invariant is difficult. The question of hardness as it relates to the rigidity of a Legendrian knot imposed by the contact structure therefore appears interesting. In particular, it is natural to ask whether some notion of unknot hardness can be achieved with $\mathrm{tb} = -1$. 

\begin{definition}[Legendrian hardness]
By a \emph{Legendrian unknotting Reidemeister sequence} we mean an unknotting Reidemeister sequence of front projections $\{\mathcal{F}_i\}_{i=0}^n$ where, up to planar isotopy of fronts, $\mathcal{F}_{i+1}$ is obtained from $\mathcal{F}_{i}$ via a Legendrian Reidemeister move, for $1\leq i\leq n$. We say that a front projection $\mathcal{F}$ of a Legendrian unknot is
\begin{enumerate}
    \item \emph{weakly Legendrian hard} if in any Legendrian unknotting Reidemeister sequence $\{\mathcal{F}_i\}$ with $\mathcal{F}_0 = \mathcal{F}$, there exists an $i$ such that $\mathrm{cr}(\mathcal{F}_i) < \mathrm{cr}(\mathcal{F}_{i+1})$. 

    \item \emph{strongly Legendrian hard} if any Legendrian unknotting Reidemeister sequence $\{\mathcal{F}_i\}$ with $\mathcal{F}_0 = \mathcal{F}$ satisfies $\overline{\mathrm{cr}}(\{\mathcal{F}_i\}) > \mathrm{cr}(\mathcal{F})$.
\end{enumerate}
When the distinction is unimportant, we simply say \emph{Legendrian hard}.
\end{definition}

\begin{definition}[Smooth hardness]
We say that a front projection $\mathcal{F}$ of a Legendrian unknot is \emph{smoothly $\ast$-hard} (or \emph{smoothly hard} when a distinction is unimportant) if its \emph{smooth resolution}, the diagram $D$ obtained by smoothing all cusps, is $\ast$-hard. Here ``$\ast$-hard" stands for any notion of hardness in \cref{def:hardness}.
\end{definition}

Smooth hardness is strictly stronger than Legendrian hardness, as smoothed Legendrian Reidemeister moves comprise a strict subset of smooth Reidemeister moves. A max-tb example witnessing the difference is given in \cref{fig:hard}. 

\begin{figure}[ht]
	\centering
	\begin{overpic}[scale=.25]{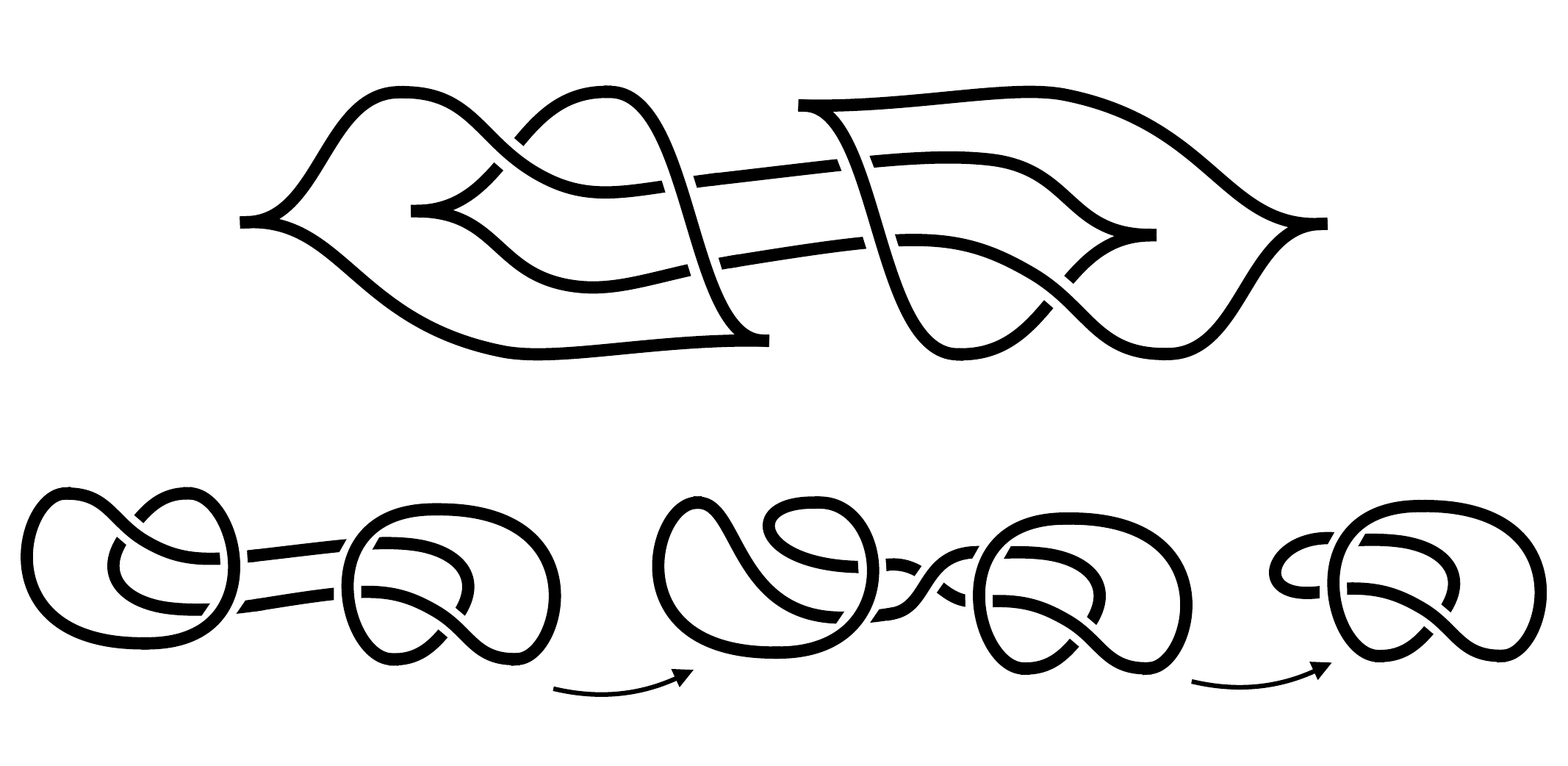}
 
	\end{overpic}
	\caption{A Legendrian hard max-tb unknot which is not smoothly hard. The second row depicts the smooth resolution and a sequence of RIII and RI moves; the resulting diagram clearly monotonically simplifies.}
	\label{fig:hard}
\end{figure}

We arrive at the main question of this article. Despite the historical trajectory from Bennequin's seminal work in contact topology, to Dynnikov's (naturally Legendrian) grid complexity, to the study of hard unknots, it appears to not have been asked before. 

\begin{question}\label{question:main}
Is there a smoothly hard front projection of the max-tb unknot?     
\end{question}

Beyond experimental intrigue, investigating \cref{question:main} is attractive from the point of view of the unknotting problem. Max-tb unknot fronts form a well-defined class of unknot diagrams which are geometrically motivated and have additional rigidity; one might speculate that there are improved polynomial bounds on the number of unknotting Reidemeister moves for such a class of diagrams (c.f.\ \cite{lackenby2015polynomial}).

On the other hand, hardness is also well-motivated from a contact topological point of view. For example, a Legendrian knot is \emph{Lagrangian slice} if it bounds a Lagrangian disk in the standard symplectic $4$-ball \cite{chantraine2010concordance,cornwell2016concordance}. One may further ask for such a Lagrangian disk to be \emph{decomposable}, a combinatorial condition on the front projection \cite{ekholm2012exactcobordisms}, or \emph{regular}, a type of geometric compatibility with a Weinstein structure on the $4$-ball \cite{eliashberg2018flexiblelagrangians}. While decomposable implies regular \cite{ConwayEtnyreTosun2021Disks}, it is an open question as to whether the converse holds in this setting \cite{Breen2024Regularly}. However, by work of Conway, Etnyre, and Tosun \cite{ConwayEtnyreTosun2021Disks}, a Legendrian knot is regularly Lagrangian slice if an only if it can be presented as a max-tb unknot in the boundary of a Weinstein Kirby diagram for the $4$-ball. By understanding hardness of max-tb unknots, one might hope to better understand the relationship between regularity and decomposability.

\subsection{Main results}\label{subsec:main_results}

We first formalize the experimental observation that it is difficult to draw hard unknot diagrams with max-tb fronts. Using the theory of normal rulings, we establish the following obstructions. 

\begin{theorem}\label{thm:obstructions}
Let $\mathcal{F}$ be a front projection of the max-tb unknot. 
\begin{enumerate}
    \item If $\mathrm{writhe}(\mathcal{F})\leq 1$, then $\mathcal{F}$ is not weakly Legendrian hard. 
    \item The smooth resolution of $\mathcal{F}$ cannot contain any of the oriented tangles, up to rotation and reversal of total orientation, depicted in \cref{fig:bad-tangles}.
\end{enumerate}
In particular, if $D$ is a hard diagram of the unknot and either $\mathrm{writhe}(D) > 1$ or $D$ contains one of the tangles in \cref{fig:bad-tangles}, then $D$ cannot be drawn as a max-tb unknot front. 
\end{theorem}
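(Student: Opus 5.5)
Both parts will come from one structural fact about normal rulings: the max-tb unknot (tb $=-1$, rot $=0$) has ruling polynomial equal to $1$. Ruling polynomials are Legendrian isotopy invariants (Chekanov--Pushkar, Fuchs), and the trivial two-cusp front has exactly one normal ruling, with no switches. So every front $\mathcal{F}$ of the max-tb unknot has exactly one normal ruling $\rho$ with an even number of switches, and its contribution $z^{j(\rho)}$, where $j = \#\text{switches} - \#\text{right cusps} + 1$, must equal $1$. In particular $\#\text{switches}(\rho) = \#\text{right cusps}(\mathcal{F}) = \tfrac12\#\text{cusps}$. Since $\mathrm{tb}=-1$, this gives
\[
\#\text{switches}(\rho) = \tfrac12\#\text{cusps} = \mathrm{writhe}(\mathcal{F}) + 1 .
\]
Switches of a graded ruling occur only at crossings of Maslov degree $0$, and for a rotation-zero unknot these are exactly the positive crossings. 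So $\rho$ switches at exactly $\mathrm{writhe}+1$ of the $p$ positive crossings.

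For part (1), suppose $\mathrm{writhe}(\mathcal{F}) \le 1$. Then $\rho$ has at most $2$ switches, and the number of right cusps is at most $2$. I would argue directly from the ruling that some crossing-non-increasing Legendrian move is available.

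In the case writhe $=-1$, there is one right cusp and hence also one left cusp. The front is then a single pair of strands with no switches, which forces zero crossings, so $\mathcal{F}$ is already trivial.

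In the case writhe $=0$, there are two right cusps and one switch. For the remaining writhe $\le 1$ cases the cusp count is small: with one or two right cusps, every strand pair of the ruling is a ``disk'' bounded by two cusps. I would then argue that an innermost left or right cusp together with its ruling companion bounds a region admitting a Legendrian RI or RII move, or an RIII move that does not increase the number of crossings. This lets us reduce crossings, or at least not increase them, while keeping the form.

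Concretely, I would induct on the number of crossings. I would use the normality conditions at switches to locate a cusp whose adjacent crossings can be removed by RI, or rearranged by RIII so that RI applies. The main obstacle I expect is exactly this step: making the local analysis rigorous near the few switches, and ensuring the RIII moves terminate, perhaps by also tracking the position of switches in the $x$-order.

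For part (2), I would show that each tangle in \cref{fig:bad-tangles}, when drawn as part of a front, forces extra cusps or forces a structure incompatible with the unique ruling. The approach is to take any Legendrian front realization of the tangle and use the over/under slope constraint to deduce where cusps must occur and the orientations of the strands at each crossing. The counting identity above then gives a contradiction: either some positive crossing is forced to be a switch that violates normality, or the local cusp count exceeds what the switch count allows.

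The final sentence of the theorem follows from (1) and (2). For a hard diagram $D$ drawn as a max-tb front, smooth hardness implies Legendrian hardness, so (1) forces $\mathrm{writhe}(D) > 1$, and (2) excludes the listed tangles.
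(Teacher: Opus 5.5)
\textbf{Gap in part (1).} Your counting identity is off by one, and there is no actual argument for part (1). From $z^{j(\rho)}=1$ you get $j(\rho)=0$, i.e.\ $\#\text{switches}=\#\text{right cusps}-1$. With $\mathrm{tb}=-1$ this gives $\#\text{switches}=\mathrm{writhe}(\mathcal{F})$, not $\mathrm{writhe}(\mathcal{F})+1$. The trivial front (no switches, writhe $0$) already contradicts your version, and your claimed ``even number of switches'' fails for every writhe-$1$ front. Consequently:
\begin{itemize}
\item writhe $-1$ never occurs, since a front has at least two cusps;
\item writhe $0$ is just the trivial front (one ruling disk, no switches, hence no crossings);
\item the only real case is writhe $1$: two ruling disks joined by a single positive switch.
\end{itemize}
For that case you describe a hoped-for local analysis and flag it as the main obstacle. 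The missing idea is to use \emph{uniqueness} of the ruling locally, not just normality. Normality only constrains the front near the switch; it does not stop the two ruling disks from forming a clasp elsewhere. Uniqueness does, because both crossings of such a clasp could be switched to give a second normal ruling. With clasps excluded, cut $\mathcal{F}$ at the switch into a left cusp region, a switch region and a right cusp region. Split according to whether the two disks are nested or disjoint at the switch. Then follow the successive crossings between the two disks from the switch into a cusp region containing crossings. Each step either exhibits a simplifying Legendrian RII, or passes a region where one disk threads through the other as a pure braid, or returns you to the other case. By compactness this terminates in a simplifying RII, or an RI if a cusp region has no crossings. This finds the simplifying move directly, with no RIII moves whose termination you must control. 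Your induction on crossing number then works, since RI and RII keep you among max-tb unknot fronts of writhe at most $1$.

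\textbf{Gap in part (2).} A global relation between switches and cusps cannot produce a contradiction here, because the rest of the front can absorb any number of cusps and switches. The constraints must be local:
\begin{itemize}
\item[(i)] Switches occur only at positive crossings. Your grading remark gives this, though degree-$0$ crossings are only a subset of the positive ones, not all of them. So in tangles (A) and (B) nothing is switched.
\item[(ii)] Hence each strand through such a tangle is a monochromatic arc on the boundary of a single ruling disk. It therefore carries at most two cusps, and it cannot cross an arc of the same colour at an unswitched crossing.
\end{itemize}
Point (ii) kills (B) at once, since its self-crossing would be an unswitched same-colour crossing. For (A), split according to whether one strand has $0$, $1$ or $2$ cusps. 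In each case the slope constraints at the three negative crossings, together with the orientations, force the other strand to carry more than two cusps. For (C), the positive self-crossing must be a switch, the switched loop must be an eye, and the strand passing through it again has at most two cusps. In each resulting minimal configuration one exhibits a second (non-orientable) normal ruling, possibly after Legendrian Reidemeister moves, contradicting uniqueness. Your sketch mentions ``extra cusps'' and ``incompatible structure'' but never isolates the two-cusp bound on monochromatic arcs or the extraneous-ruling argument, and these are what make the proof work.

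Your deduction of the final sentence from (1) and (2) is fine.
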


\begin{figure}[ht]
	\centering
	\begin{overpic}[scale=.6]{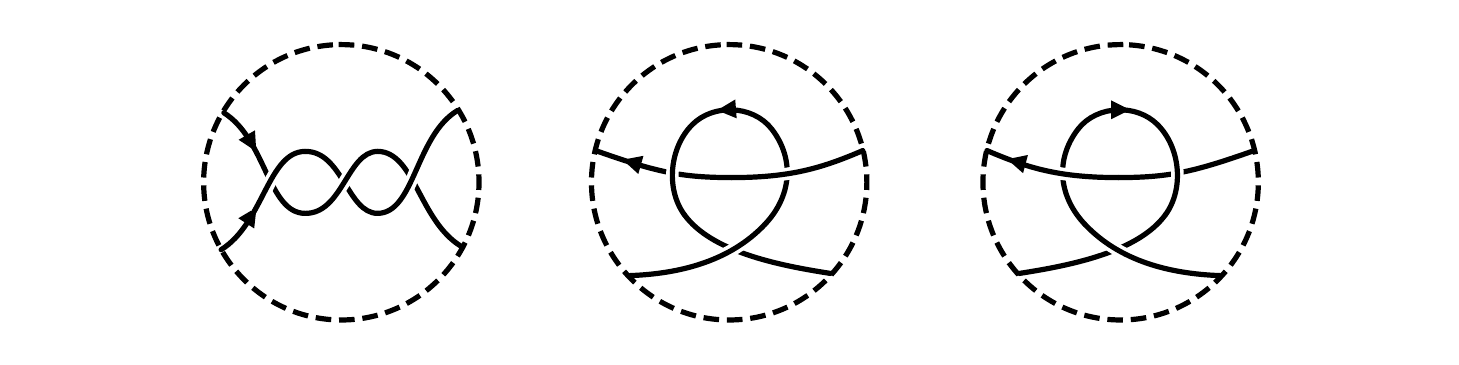}
     \put(21,0){(A)}
     \put(48.5,0){(B)}
     \put(75,0){(C)}
	\end{overpic}
	\caption{Tangles prohibited in a max-tb unknot front.}
	\label{fig:bad-tangles}
\end{figure}

There are several known recipes for constructing complicated diagrams of the unknot. \cref{thm:obstructions} prevents many of these from being max-tb unknots. 

\begin{corollary}\label{cor:families}
The following families of unknots cannot be witnessed by a max-tb front:
\begin{enumerate}
    \item Hard generalized Freedman-He-Wang unknots \cite[\S 4.1]{burton2024hard}.
    \item Generalized Goeritz unknots \cite[\S 4.2]{burton2024hard}.
    \item Hass-Nowik unknots \cite[Figure 2]{hass2010quadratic}.
\end{enumerate}    
\end{corollary}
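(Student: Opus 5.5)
The plan is to deduce \cref{cor:families} directly from \cref{thm:obstructions}, treating each family in turn. For each family the goal is to show that every diagram in it either has writhe at most $1$ while being hard, or contains one of the forbidden tangles (A), (B), (C) of \cref{fig:bad-tangles}, up to rotation and reversal of total orientation. Note that a diagram witnessed by a max-tb front must be the smooth resolution of that front up to planar isotopy. Hence either condition suffices: the writhe condition rules out the front via part (1), since a front with writhe $\le 1$ is not even weakly Legendrian hard and so its resolution is not hard, and the tangle condition rules it out via part (2). Because the writhe of an oriented diagram is independent of the choice of orientation, the writhe criterion can be checked on any orientation.

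\textbf{Freedman--He--Wang.} For the generalized Freedman--He--Wang unknots of \cite[\S 4.1]{burton2024hard}, I would use the explicit parametrized construction: a twisted band pattern whose crossings come in blocks. I would compute the writhe as a function of the parameters. The expectation is that hardness in that section is only asserted for parameter ranges where the twisting blocks cancel in sign, so that the writhe is at most $1$ (presumably $0$ or negative). Failing that, I would locate a clasp region that, after orienting, matches one of the tangles in \cref{fig:bad-tangles}.

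\textbf{Goeritz and Hass--Nowik.} For the generalized Goeritz unknots of \cite[\S 4.2]{burton2024hard} and the Hass--Nowik unknots of \cite[Figure 2]{hass2010quadratic}, I would instead look for a forbidden tangle directly. Each member of these families contains a fixed local piece that is independent of the parameter. For Goeritz, this is the central twisted region visible in \cref{fig:goeritz}; for Hass--Nowik, it is the twisted end of the spiral. I would then check that, for either orientation of the knot, this piece is one of (A)--(C) after a rotation. Since the piece is the same throughout the family, a single verification handles all parameters. As a backup, I would compute the writhe of each family. For Goeritz-type diagrams I expect the writhe to be small, because the crossings pair off with opposite signs, so part (1) may also apply.

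\textbf{Main obstacle.} The hard step is the case analysis of orientations and rotations. It requires matching the tangles of \cref{fig:bad-tangles} inside diagrams whose orientation propagates globally. This is especially delicate for the Freedman--He--Wang family, where the writhe depends on the parameters and only a sub-range is hard. I would first attempt the writhe computation, since it needs no pattern matching, and fall back to tangle-hunting only where the writhe exceeds $1$.
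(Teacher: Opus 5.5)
Your proposal follows the paper's proof. The hard generalized Freedman--He--Wang unknots are excluded by the writhe bound of \cref{thm:obstructions}(1): the paper notes the writhe is $0$ for every diagram in that construction, since the $K$ and $\bar K$ boxes cancel, so no analysis of parameter ranges is needed. The generalized Goeritz and Hass--Nowik unknots are excluded by part (2), and the paper identifies the prohibited tangle as specifically (A), visible directly in their diagrams.
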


\noindent Using additional obstructions and more sophisticated arguments, we can obstruct a family of hard unknots due to Kauffman and Lambropoulou. 

\begin{theorem}\label{thm:KL}
The hard unknots of Kauffman-Lambropoulou \cite{kauffman2011collapsing} constructed by rational tangles cannot be witnessed by a max-tb front.
\end{theorem}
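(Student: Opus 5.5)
The plan is to argue by contradiction. Suppose a Kauffman--Lambropoulou hard unknot $D$ is the smooth resolution of a front $\mathcal{F}$ with $\mathrm{tb}(\mathcal{F})=-1$. The main tools are \cref{thm:obstructions} and normal rulings. Since $\mathrm{tb}=-1$ is maximal for the unknot, the ruling polynomial of the max-tb unknot is $1$: $\mathcal{F}$ admits exactly one normal ruling, and that ruling has $\#\text{switches} - \#\text{right cusps} = -1$, i.e.\ it is the unique ruling with $\chi=1$. Writing $c$ for the number of right cusps, we have $\mathrm{tb} = \mathrm{writhe}(D) - c = -1$. This gives $\mathrm{writhe}(D) = c-1$, so the crossing structure and the cusp count of the front are tightly linked.

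\textbf{Step 1: writhe.} Recall that the Kauffman--Lambropoulou unknots are built as numerator closures of the form $N(T + T')$ (or a twisted variant), where $T, T'$ are rational tangles with fractions $p/q$ and $-p/q'$ satisfying $pq' - p'q = \pm1$, drawn in alternating continued-fraction form. I would compute $\mathrm{writhe}(D)$ directly from the continued-fraction expansions. Each twist box contributes $\pm a_i$ according to whether its strands are parallel or antiparallel, and the orientation pattern is determined by parities of numerators and denominators. By \cref{thm:obstructions}(1), every case with $\mathrm{writhe}\le 1$ is immediately excluded.

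\textbf{Step 2: local tangles.} For the remaining cases, I would look for the tangles of \cref{fig:bad-tangles}. A twist box with $|a_i|\ge 2$ and antiparallel strands, or a clasp between adjacent boxes, should contain tangle (A), (B), or (C) up to rotation and reversal; if so, \cref{thm:obstructions}(2) finishes those cases.

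\textbf{Step 3: residual cases.} What should remain is a small family, probably all twist regions parallel, with large positive writhe and no forbidden subtangle. Here I would use the normal ruling directly. A front whose smooth resolution contains a twist box of $n$ parallel positive crossings must place those crossings between two strands that are either paired together by the ruling, or switched, or interlaced in a restricted way. Counting switches against right cusps using $\mathrm{writhe} = c-1$, I expect to show that the number of crossings that can be switches is too small. That would force some pair of companion strands to cross an odd number of times without switching, violating the normality conditions at the ends of the twist box. An alternative for these cases is the ruling-polynomial or Kauffman-polynomial bound $\mathrm{tb}\le -\max\deg_a F - 1$: computing the Kauffman polynomial of $D$ as a diagram (not as the knot type, which is trivial) does not help. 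So I would instead restrict the ruling polynomial to subfronts obtained by cutting along the rational-tangle decomposition, and show that each rational subtangle with its induced writhe admits no ruling compatible with the global count.

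\textbf{Main obstacle.} Step 3 is where I expect the real difficulty. The case analysis on orientations of rational tangles is finite in type but involves all continued fractions, so it must be organized by parity classes $(p,q) \bmod 2$. The key lemma I would try to prove first is: \emph{in a max-tb unknot front, any rational subtangle whose resolution has writhe $w$ uses at least $w+1$ right cusps that are paired within the subtangle by the ruling.} Summing this over $T$ and $T'$ should contradict $\mathrm{writhe} = c-1$.
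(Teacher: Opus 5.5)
Your Step 1 agrees with the paper: the boxes outside the center cancel in pairs, so the writhe is $\pm a_{2n}$, and \cref{thm:obstructions} forces it to equal $a_{2n}\ge 2$. Steps 2 and 3, however, do not work.

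\textbf{Step 2.} The claim that an antiparallel twist box with $|a_i|\ge 2$ contains a prohibited tangle is false. Tangle (A) requires three negative crossings, and an antiparallel box of positive crossings contains none of (A), (B), (C). What \cref{prop:two-switch} shows instead is that every crossing in such a box must be a switch. This is exactly the configuration that survives here. The case of parallel strands through the central box is eliminated by a pure orientation argument (tracking the permutation of the $3$-braid $\sigma_2^{-a_1}\sigma_1^{a_2}\cdots$). The real case is the central box $-a_{2n}$ with antiparallel strands. So your guess that the residue is ``all twist regions parallel'' is backwards.

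\textbf{Step 3.} The key lemma is false, and in fact proves too much. Combined with $\mathrm{writhe}=c-1$, it would show that no max-tb unknot front has smoothing of the form $N(T+T')$ with $T,T'$ rational. Yet the standard two-cusp front is $N([0]+[\infty])$: both tangles have writhe $0$, the lemma demands $c\ge 2$, but $c=1$. Likewise, the one-crossing front produced by a Legendrian RI is $N([1]+[0])$ with $c=2<3$. The writhe of a subtangle does not control how many cusps lie inside it. Separately, two distinct ruling disks crossing an odd number of times inside a box without switching is not a normality violation.

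\textbf{What is missing.} The paper combines $\#\{\text{switches}\}=\mathrm{writhe}=a_{2n}$ with \cref{prop:two-switch}:
\begin{itemize}
\item All $a_{2n}$ central crossings are switched, so no other crossing is. Hence no two strands of the same ruling color may pass through any other box.
\item Propagating colors through the two $3$-braid regions yields parity constraints: $a_2,\dots,a_{2n-2}$ even and $a_{2n-1}$ odd.
\item Orientations together with \cref{prop:two-switch} then force $a_1=1$, and for $n\ge 3$ give a contradiction at the $-a_3$ box.
\end{itemize}
For $n=2$, this local reasoning, plus tangle (A) to get $a_2=2$, leaves the family $a_1=a_3=1$, $a_2=2$, $a_4=2k$, where no local contradiction appears. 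The paper finishes with a genuinely global obstruction: every vertical line meets each ruling disk at most twice. This becomes a walk condition on the colored dual graph, which fails at one specific region. Nothing in your counting-and-tangles approach can replace this step.
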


Beyond the families above, there are $21$ diagrams of the unknot highlighted in the survey \cite[Appendix A]{burton2024hard}. Not all of them are hard in the sense of \cref{def:hardness}, but they are a representative sampling of well-known complicated diagrams. With \cref{thm:obstructions} and additional ad hoc arguments using normal rulings, we can obstruct almost all of them from being max-tb unknots. 

\begin{theorem}\label{thm:burton}
Of the 21 unknot diagrams (up to mirroring) surveyed in \cite[Appendix A]{burton2024hard}, at least 20 of them cannot be witnessed by a max-tb front.    
\end{theorem}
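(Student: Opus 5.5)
The approach is case-by-case: go through the 21 diagrams of \cite[Appendix A]{burton2024hard}, one mirror class at a time, and for each find an obstruction showing it cannot arise as the smooth resolution of a max-tb unknot front. The key point is that a smooth diagram $D$ can be drawn as a front only in finitely many combinatorial ways up to planar isotopy. These are determined by choosing where the vertical tangencies (cusps) go, subject to the slope rule at each crossing. Any such front has $\mathrm{tb} = \mathrm{writhe}(D) - \tfrac{1}{2}\#\text{cusps}$, and mirroring or reorientation only changes the convention. So the statement reduces to showing that for each diagram, no admissible cusp placement gives $\mathrm{tb}=-1$, which is equivalent to $\#\text{cusps} = 2\,\mathrm{writhe}(D)+2$.

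First I apply the cheap filters coming from \cref{thm:obstructions}. For each diagram I compute the writhe. If $\mathrm{writhe}(D)\le 0$, then $\mathrm{tb}=-1$ would force $\#\text{cusps}\le 2$, so $D$ would have to be a front with only two cusps. Such a front has crossings with at most two strands over each $x$-coordinate, so essentially none, and this case is quickly excluded whenever $D$ has crossings. Writhe $1$ with hardness is ruled out directly by part (1) of \cref{thm:obstructions}. In addition, I scan each diagram for the tangles (A), (B), (C) of \cref{fig:bad-tangles}, up to rotation and orientation reversal. I expect these two checks together to dispose of the large majority of the 21 diagrams, including the Goeritz, Freedman--He--Wang, and Hass--Nowik-type entries already covered by \cref{cor:families}.

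For the remaining diagrams I argue directly with normal rulings. A max-tb unknot front must admit a normal ruling; indeed its ruling polynomial equals $1$, which is the key input behind \cref{thm:obstructions}. For each surviving diagram I first bound the number of cusps any front realization can have. The crossing-slope constraints force local cusp patterns: around each crossing, the over-strand must have negative slope, which creates constraints of the form ``between these two crossings along this arc there must be a cusp.'' I then compare this lower bound against $2\,\mathrm{writhe}+2$. When the forced count is too large, the diagram is obstructed immediately. When it is not, I enumerate the few minimal cusp placements and check that none admits a normal ruling with ruling polynomial $1$. This amounts to checking, switch by switch, that the normality conditions fail at some crossing.

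The main obstacle is the ad hoc last step: for the hardest diagrams the number of cusp placements may be sizable, and the ruling analysis must be organized, for example via forced switches near clasps, to stay tractable. I expect exactly one diagram to resist all these arguments, which is why the theorem claims only ``at least 20'' of the 21.
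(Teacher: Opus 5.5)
You use the same two filters as the paper: writhe, then the tangles of \cref{fig:bad-tangles}. But the statement is a finite case check, and the proposal never carries it out. You do not compute the writhes, locate the tangles, or say which diagrams survive, so ``I expect'' carries the whole argument. In the paper these filters dispose of sixteen diagrams. That leaves \cite[Figs.~9, 16, 17, 27]{burton2024hard}, plus Haken's Gordian unknot (Fig.~25), which stays open. Fig.~16 has writhe $1$ but is not known to be hard, so your writhe-$1$ filter does not reach it. Your writhe-$0$ step also needs a correct reason. ``At most two strands over each $x$'' does not rule out crossings, since figure-eight two-cusp fronts exist. The real reason is that the two branches of a two-cusp front run in opposite $x$-directions. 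Hence every crossing is negative, and writhe $0$ forces no crossings.

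The genuine gap is your method for the surviving diagrams. Suppose you produce an admissible front realization of $D$ with $2\,\mathrm{writhe}(D)+2$ cusps. Since $D$ is an unknot diagram, that front has $\mathrm{tb}=-1$ and automatically admits a unique normal ruling. So ``checking that none admits a normal ruling'' can never yield an obstruction. In your framework the only possible obstruction is that no such realization exists at all. Showing that needs an exhaustive enumeration of front realizations, which you have not done and which is impractical for, e.g., the $43$-crossing $D_{43}$ diagram.

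The missing idea is to treat the hypothetical ruling as necessary structure on the smooth diagram itself, without choosing any cusps:
\begin{enumerate}
\item there are exactly $\mathrm{writhe}(D)$ switches, all at positive crossings;
\item negative crossings are never switched, so their two strands carry different colors;
\item colors propagate along arcs until they reach a positive crossing;
\item each color class must close up into a single eye.
\end{enumerate}
This is how the paper finishes the four cases:
\begin{enumerate}
\item Fig.~27 contains $16$ pairwise disjoint copies of the tangle in \cref{fig:switch-tangle}, each forcing a switch, but its writhe is only $4$.
\item For Fig.~16 (writhe $1$), switching any single one of its $8$ positive crossings fails to split the diagram into two planar disks.
\item For Figs.~17 and $9$, propagating colors from the negative crossings eventually forces a same-colored crossing that cannot be orientably switched.
\end{enumerate}
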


\noindent The one diagram we have not obstructed is Haken's infamous Gordian unknot (see \cite[Figure 25]{burton2024hard}, \cite{petronio2016unknots}, \cite{stewart2009professor}, or the discussion in \cite{gowers2011hard}) with $141$ crossings and writhe $37$. See \Cref{table1} for an accounting of the diagrams considered by \Cref{thm:burton}.

The recent article \cite{applebaum2024unknottingnumberhardunknot} applied reinforcement learning to generate and study 2.6 million hard unknot diagrams. Using  \cref{thm:obstructions} and a Python script which computes writhes and searches for prohibited tangles via PD codes, we can obstruct about $67\%$ of them from being max-tb fronts. 

\begin{corollary}\label{cor:data}
Of the 2,623,588 hard diagrams (up to mirroring) of the unknot studied in \cite{applebaum2024unknottingnumberhardunknot}, at least 1,770,489 cannot be witnessed by a max-tb front. 
\end{corollary}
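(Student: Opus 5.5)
This corollary is computational, and I would prove it by applying \cref{thm:obstructions} mechanically to each diagram in the dataset of \cite{applebaum2024unknottingnumberhardunknot}. The first task is to fix conventions. The dataset gives each hard diagram as a PD code, and the statement is phrased up to mirroring, so for each diagram $D$ I would consider both $D$ and its mirror $\overline{D}$. A diagram is counted as obstructed only if \emph{both} $D$ and $\overline{D}$ fail to be realizable as max-tb fronts. Orientation reversal is harmless: writhe is invariant under reversal, and \cref{fig:bad-tangles} is already stated up to reversal of total orientation. Mirroring, however, negates writhe and mirrors tangles, so it genuinely matters.

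Next, for each PD code I would orient the diagram and compute its writhe as the sum of crossing signs read off the PD tuples. Since each diagram is hard (taken from the dataset), part (1) of \cref{thm:obstructions} shows that $D$ cannot be a max-tb front whenever $\mathrm{writhe}(D)\le 1$. Equivalently, and as recorded in the ``in particular'' clause, for the smooth resolution to be realizable we need $\mathrm{writhe}(D)>1$. For the mirror this requires $\mathrm{writhe}(D)<-1$, so a diagram with $|\mathrm{writhe}(D)|\le 1$ is obstructed in both chiralities at once. For the remaining diagrams I only need to treat the chirality with writhe greater than $1$ (the other chirality is already killed), and there I search for the tangles (A), (B), (C).

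The tangle search is done combinatorially on the PD code. Each tangle in \cref{fig:bad-tangles} is a small oriented configuration of crossings joined by specified arcs bounding specified faces. I would encode each tangle, together with all its rotations and its orientation reversal, as a pattern: a list of crossings with prescribed over/under data and signs, plus a list of edge identifications and face-adjacency constraints. I would then enumerate candidate crossing tuples in $D$ and check whether each pattern embeds. Face data, which is needed to ensure that the configuration really appears as a planar sub-tangle rather than just abstractly linked crossings, can be recovered from the PD code by tracing the boundary cycles of complementary regions.

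The main obstacle is making this tangle matching correct. In particular, every allowed symmetry of each tangle must be enumerated, and the tangle must be required to sit in a disk with exactly the prescribed boundary. Otherwise the count would be inflated by spurious matches. I would validate the script against hand-checked examples, such as diagrams from \cref{cor:families}, before running it on all 2,623,588 codes and tallying the 1,770,489 that are obstructed. Because a false negative only weakens the bound, while a false positive would invalidate it, the matching should be conservative: it should require an exact local configuration.
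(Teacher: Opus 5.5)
Your plan is correct and matches the paper's own argument. The paper also obtains the corollary by applying \cref{thm:obstructions} through a script that computes writhes from PD codes: it passes to the chirality of nonnegative writhe (as in \cref{table1}), discards everything of writhe $\le 1$, and searches the remaining diagrams for the tangles (A), (B), (C) of \cref{fig:bad-tangles}; your treatment of mirroring, orientation reversal, and the requirement that a matched tangle genuinely bound a disk is the bookkeeping such a script needs, and the paper gives no more detail than this.
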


\begin{remark}
Below (\cref{conj:main}) we state a conjectural sharp writhe obstruction that a smoothly spherically hard max-tb unknot front must have writhe $\geq 4$. This, together with the prohibited tangles, would obstruct 2,227,501 (about $85\%$) of the 2,623,588 hard diagrams (up to mirroring) from being witnessed by a max-tb front.
\end{remark}

These obstructions confirm that it is difficult, and in many cases impossible using known examples, to draw a complicated unknot diagram with a max-tb front. Nevertheless, we construct infinitely many smoothly hard max-tb unknots. To state our main result in an even stronger form, note that if $\mathcal{F}$ is a front projection of a max-tb unknot, then $\mathrm{writhe}(\mathcal{F}) = -1 + \frac{1}{2}(\# \text{ of cusps}) \geq 0$. Let $\mathrm{minwr}_{\R^2}$ and $\mathrm{minwr}_{S^2}$ denote the minimum writhe of a max-tb unknot front projection which is smoothly hard on the plane and sphere, respectively. \cref{question:main} may then be rephrased as asking whether either of these quantities is finite, and then further refined by asking for the minimum value. 

\begin{theorem}\label{thm:main}
Smoothly hard Legendrian unknots exist on both the plane and sphere, and 
\begin{align*}
    2 \leq \mathrm{minwr}_{\R^2} \leq 3, \\
    2\leq \mathrm{minwr}_{S^2} \leq 4.
\end{align*}
\end{theorem}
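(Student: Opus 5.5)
The theorem has two halves. The lower bounds come from \cref{thm:obstructions}; the upper bounds need explicit constructions.

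\textbf{Lower bounds.} Let $\mathcal{F}$ be a max-tb unknot front that is smoothly hard, on either the plane or the sphere. Smooth hardness of either kind implies planar Legendrian hardness in the weak sense. Indeed, every Legendrian unknotting sequence, after smoothing cusps, is a smooth unknotting sequence on the plane, and hence also on the sphere. It has the same crossing numbers, since smoothing cusps does not change crossings. Hardness on the sphere forbids monotone sequences on the sphere, and these include the planar ones.

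By \cref{thm:obstructions}(1), weak Legendrian hardness forces $\mathrm{writhe}(\mathcal{F})\geq 2$. Since $\mathrm{tb}=-1$, we have $\mathrm{writhe}(\mathcal{F}) = -1+\tfrac12(\#\text{cusps})$, which is an integer. So $\mathrm{minwr}_{\R^2},\mathrm{minwr}_{S^2}\geq 2$, and this part is formal.

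\textbf{Upper bounds.} I will exhibit two explicit fronts.
\begin{enumerate}
    \item $\mathcal{F}_{\R^2}$: a max-tb front with $8$ cusps and writhe $3$ whose smooth resolution is strongly hard on the plane.
    \item $\mathcal{F}_{S^2}$: a max-tb front with $10$ cusps and writhe $4$ whose smooth resolution is strongly hard on the sphere.
\end{enumerate}
Strong hardness implies weak hardness, so these give the stated bounds and also prove existence. To find such fronts, I would start from a hard smooth diagram that avoids the tangles of \cref{fig:bad-tangles}; for instance, one can modify a diagram from \cite{applebaum2024unknottingnumberhardunknot} that survives the obstructions. I would then realize it as a front by placing cusps at its vertical tangencies and arranging the crossings so that negative slope passes over positive slope. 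Finally I would adjust, using Legendrian isotopies that add or remove pairs of crossings, until $\mathrm{tb}=-1$. Equivalently, a convenient route is to begin with the standard max-tb unknot and apply Legendrian RII and RIII moves in a crossing-increasing way that ends in a locally rigid configuration.

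For each front, three things must be verified:
\begin{enumerate}
    \item It is the Legendrian unknot with $\mathrm{tb}=-1$. I would check this by an explicit Legendrian simplifying sequence, which must necessarily increase crossings at some step. Alternatively, a smooth unknotting together with the $\mathrm{tb}$ computation suffices.
    \item Its writhe equals $3$ (respectively $4$), computed directly from $\mathrm{writhe}=\#\text{pos}-\#\text{neg}$.
    \item Its smooth resolution is strongly hard. I would check this the way Goeritz's example is checked: inspect every face of the diagram, including the unbounded face in the spherical case, and confirm that no RI move or RII move decreasing crossings is available, and that no RIII move is available.
\end{enumerate}
For the third item, note that if no crossing-non-increasing move is available at all, then every unknotting sequence must begin with a crossing-increasing move. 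Hence $\overline{\mathrm{cr}}>\mathrm{cr}$.

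\textbf{Main obstacle.} The hard step is finding the constructions, especially at writhe $3$ on the plane. The rulings obstructions show that the tangles most commonly used to build hard diagrams are unavailable. In particular, small bigon and trigon faces with the wrong orientation pattern tend to be forced by cusps. I would first design a ``core'' planar-hard tangle in which every face is either a monogon-free region of degree at least $4$ or a trigon whose crossings do not admit RIII. I would then close it up with the fewest cusps.

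For the spherical case, the outer face also needs degree at least $4$ with no available move. This appears to cost one extra positive twist, which explains the bound $4$ rather than $3$. Certifying that the front really is an unknot is routine but could be tedious; I would delegate it to a computer check, such as SnapPy or a PD-code Reidemeister search, if needed.
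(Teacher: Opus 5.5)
Your lower-bound argument is correct and is essentially the paper's. \cref{prop:writhe1obstruct} shows that a max-tb unknot front of writhe $\le 1$ always has a simplifying Legendrian RI or RII move, and iterating this gives a monotone sequence. Such a front is therefore not smoothly hard in any sense, so your route through weak Legendrian hardness and \cref{thm:obstructions}(1) is a valid rephrasing.

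\textbf{The upper bounds are not proved.} The entire content of this half of the theorem, including the existence claim, is that specific fronts exist. You describe a search but never exhibit a front, nor any mechanism guaranteeing that one exists. The search recipe also has a concrete error. $\mathrm{tb}$ is a Legendrian isotopy invariant, so realizing a hard smooth diagram as a front and then ``adjusting by Legendrian isotopies until $\mathrm{tb}=-1$'' cannot work. If the first realization has $\mathrm{tb}<-1$, no Legendrian isotopy will fix it. Moreover, \cref{sec:obstruct} shows that most known hard diagrams cannot be realized with $\mathrm{tb}=-1$ at all. Your alternative, building up from the standard front by Legendrian moves, does keep $\mathrm{tb}=-1$, but it simply restates the open problem of finding a rigid configuration.

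Your hardness criterion, that no RIII move be available at all, is sufficient but stronger than needed, and it may be unattainable here. The paper's writhe-$3$ planar example necessarily contains an RIII move, created by the extra positive crossing. The paper handles this by exhausting the RIII orbit and checking that no good move ever appears, rather than by forbidding RIII.

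\textbf{The missing idea is a modular, composite construction.} A building block (\cref{def:LBB}) is a Legendrian arc with the following properties:
\begin{enumerate}
\item Closing it with one right cusp gives a max-tb unknot.
\item The smoothing of that closure has exactly one good move, an RII$^-$ at the closing cusp.
\item This remains true after exhausting RIII moves.
\end{enumerate}
\cref{lemma:blocks} exhibits writhe-$2$ blocks and verifies these properties by hand.

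\cref{prop:recipe} then glues two blocks by a Legendrian surgery, which is smoothly a connected sum:
\begin{enumerate}
\item The result is an unknot, being a connected sum of unknots.
\item It is max-tb, because Legendrian surgery on fillable knots yields a Lagrangian filling.
\item The gluing destroys both RII$^-$ moves.
\item The RIII orbits on the two sides do not interact.
\end{enumerate}
This gives spherical hardness at writhe $2+2=4$. Closing a \emph{strong} block with one extra positive crossing instead gives planar hardness at writhe $2+1=3$.

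Without this construction, or some other explicit and verified example, the inequalities $\mathrm{minwr}_{\R^2}\le 3$ and $\mathrm{minwr}_{S^2}\le 4$, and the existence statement, remain unestablished.
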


An example of a smoothly (strongly, spherically) hard max-tb unknot of writhe $4$ is given in \cref{fig:writhe4hard}. In \cref{sec:construct} we verify its smooth hardness and give other constructions of infinitely many smoothly hard max-tb unknots, establishing the upper bounds of the theorem. 

\begin{figure}[ht]
	\centering
	\begin{overpic}[scale=.3]{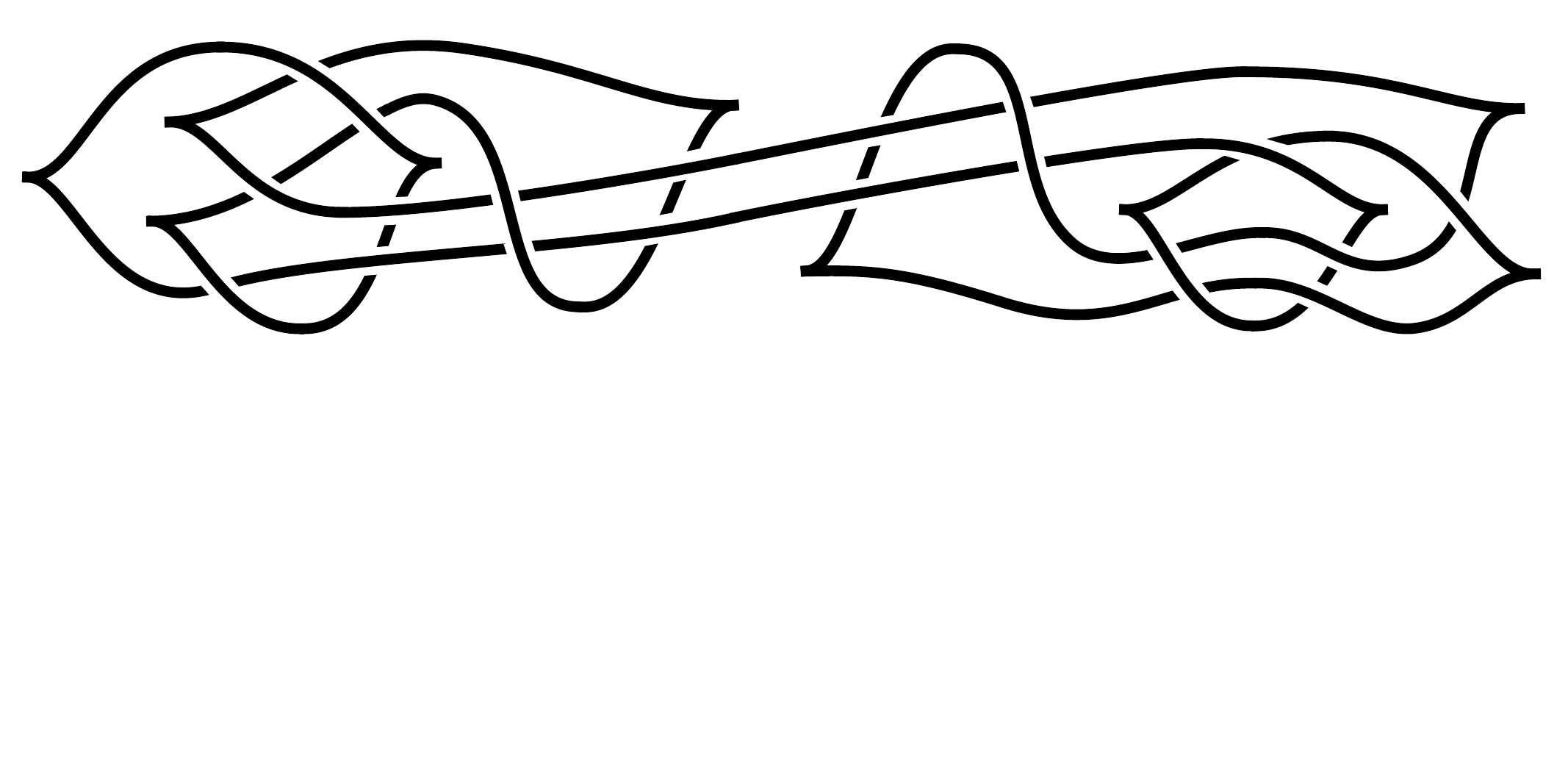}
 
	\end{overpic}
    \vskip-3cm
	\caption{A smoothly strongly hard max-tb unknot of writhe $4$ on the sphere.}
	\label{fig:writhe4hard}
\end{figure}

Our work suggests the following sharp version of \cref{thm:main}. 

\begin{conjecture}\label{conj:main}
For smoothed max-tb Legendrian unknots,
\begin{align*}
    \mathrm{minwr}_{\R^2} = 3, \\
    \mathrm{minwr}_{S^2} = 4.
\end{align*}
\end{conjecture}

\noindent To prove \cref{conj:main}, one must increase the lower bounds in \cref{thm:main}. As alluded to above, our lower bounds are obtained by appealing to the existence and uniqueness of \emph{normal rulings} of a max-tb unknot front, a decomposition of the front into certain planar disks first considered by Eliashberg \cite{eliashberg1987wave} and formalized by Fuchs \cite{fuchs2003rulings} and Chekanov and Pushkar \cite{chekanov2007pushkar}. The \emph{ruling polynomial} is a Legendrian isotopy invariant which collects graded counts of normal rulings, and is closely related to the Chekanov-Eliashberg DGA \cite{fuchs2003rulings,fuchs2004invariants,sabloff2005augmentations,leverson2014augmentations}.

Attempting to use this strategy to prove that $\mathrm{minwr}_{\R^2}>2$ leads one to consider writhe-$2$ diagrams which admit unique normal rulings. Unlike the writhe-$1$ case, there are many such fronts representing nontrivial knot types.

\begin{definition}\label{def:impostor}
An \emph{impostor} is a front projection $\mathcal{F}$ which satisfies the following properties:  
\begin{enumerate}
    \item The knot represented by $\mathcal{F}$ is not the unknot. 
    \item The front $\mathcal{F}$ admits a unique normal ruling. 
    \item Up to smooth RIII moves, there are no available crossing-decreasing smooth Reidemeister moves.
\end{enumerate}
\end{definition}

\begin{theorem}\label{thm:impostor}
There are infinitely many writhe-$2$ impostors.   
\end{theorem}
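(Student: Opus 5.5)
We will prove the theorem by exhibiting an explicit infinite family $\{\mathcal{F}_n\}_{n\geq 1}$ of writhe-$2$ fronts and checking the three conditions of \cref{def:impostor} for each member.

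\textbf{Construction.} Each $\mathcal{F}_n$ will have six cusps and writhe $2$, so that $\mathrm{tb}(\mathcal{F}_n)=-1$. We begin with a base front whose smooth resolution is a nontrivial knot, for instance a twist-type knot drawn as a front with a small number of crossings. We then insert $n$ extra pairs of crossings, one positive and one negative in each pair, inside a fixed twist region or braid-like box. This keeps the writhe equal to $2$ and the cusp count fixed. The pairs are placed so that smoothly they form a genuine twist region, which makes the knot type vary with $n$ (for example through a family of twist knots or $2$-bridge knots), and so that no crossing-decreasing move is available.

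\textbf{Verification.} The three conditions are checked as follows.

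\emph{Nontriviality (condition (1)).} We compute a knot invariant of the smooth resolution: the determinant, or the Alexander polynomial read off from a rational-tangle or continued-fraction description. Showing it is nontrivial for every $n$ proves (1). Showing it depends nontrivially on $n$ also proves that infinitely many of the $\mathcal{F}_n$ are distinct.

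\emph{Unique normal ruling (condition (2)).} A normal ruling is determined by its set of switches. Every switch must be a crossing whose two strands belong to distinct ruling disks, and the normality condition must hold near each switch. Because there are only six cusps, there are three ruling disks, so the combinatorics is small. We plan to argue inductively in $n$. Each inserted twist pair forms a local configuration whose switch status is forced by the switches immediately to its left, and that forced status is then propagated to its right. Concretely, adjacent crossings between the same two ruling disks must be either both switched or both unswitched, and normality excludes one of these two options. This reduces the count to that of the base front, which we enumerate by hand and find to be exactly one. Alternatively, one can compute the ungraded ruling polynomial via the skein relations of Rutherford and show it equals $1$ for every $n$. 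The ruling polynomial is an isotopy invariant, but it is also a count for a fixed front, so this yields uniqueness directly.

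\emph{No crossing-decreasing moves (condition (3)).} We inspect each complementary region of the smooth resolution. We check that no monogon gives an RI move, and that no bigon has an alternating-compatible configuration that would give an RII move. We then check that this remains true after every RIII move, which requires enumerating the triangular regions and their RIII images. The repeated twist region is a chain of bigons. The crossings are arranged so that these bigons are alternating, meaning each one has one over and one under at its ends, so they cannot be removed by RII. The RIII orbit is checked on the base pattern and then seen to be unchanged by the insertion of more twist pairs, since the relevant triangles are local.

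\textbf{Main obstacle.} The hardest part is balancing conditions (2) and (3) with writhe fixed at $2$. Inserted crossings of opposite signs tend to create RII-reducible bigons or extra rulings, while keeping the diagram rigid tends to introduce additional switch possibilities. I expect the key work to be choosing the twist-pair pattern, likely clasps where the strand slopes force the crossing signs, and then proving the propagation lemma for rulings across that pattern. I would first search computationally among small six-cusp fronts for a base example, and then identify a repeatable box that preserves all three properties.
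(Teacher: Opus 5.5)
\textbf{Genuine gap: no front is ever constructed, and the sketched construction cannot work.} The theorem is an existence statement, but you never exhibit a base front or the repeatable box. Both are deferred to a computer search, so conditions (1)--(3) are not verified for any front.

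The mechanism you describe is also self-contradictory. Two crossings bounding a bigon between the same two strands have the same sign exactly when the bigon is alternating. (In a front, two cusp-free strands crossing twice automatically form an alternating bigon: both crossings are positive if the strands run in the same $x$-direction and negative otherwise.) Consequences:
\begin{itemize}
\item A ``genuine twist region'' of alternating bigons has all crossings of one sign, so it changes the writhe.
\item An adjacent $\pm$ pair bounds an RII-removable bigon, which violates condition (3).
\end{itemize}
The writhe-neutral operation the paper uses is to twist two \emph{bands} around each other. The blue and yellow bands of the writhe-$2$, six-cusp $11n_{139}$ front in Figure~\ref{fig:pretzel-family} each consist of the two oppositely oriented boundary strands of a ruling eye. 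Each full twist therefore adds $8$ crossings of total writhe $0$ and no cusps. Proposition~\ref{prop:pretzels} then identifies the resulting fronts as $P(-2m-5,-3,3)$, which gives nontriviality.

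\textbf{The knot types you propose are ruled out, and your uniqueness lemma is false.} Choosing the knot type is the real content of the proof.
\begin{itemize}
\item A front with a normal ruling is max-tb. So a six-cusp writhe-$2$ impostor must represent a knot with $\overline{\mathrm{tb}}=-1$.
\item By Rutherford \cite{rutherford2006thurston}, its ungraded ruling count depends only on the smooth type and $\mathrm{tb}$ (it is an extreme Kauffman coefficient). That coefficient must therefore equal the unknot's.
\item No nontrivial twist knot has $\overline{\mathrm{tb}}=-1$. For instance, the two chiralities of $5_2$ have $\overline{\mathrm{tb}}=1,-8$, and those of $6_1$ have $-3,-5$ (the HOMFLY bound already gives this).
\item For $2$-bridge knots you give no reason to expect the unknot's coefficient.
\end{itemize}
The paper avoids computing anything here. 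Cornwell--Ng--Sivek \cite{cornwell2016concordance} exhibited Legendrian $P(-n,-3,3)$ with $\mathrm{tb}=-1$, $\mathrm{rot}=0$, and DGA stable tame isomorphic to the unknot's, hence ruling polynomial $1$. Once the smooth type of the twisted fronts is identified, Rutherford's theorem transfers ruling polynomial $1$ to them.

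Your combinatorial fallback for condition (2) rests on a false lemma. Adjacent crossings between the same two ruling disks need not share switch status. The standard $\mathrm{tb}=1$ right-handed trefoil front has three crossings between its two lower strands, all between the same two ruling disks. It has a normal ruling that switches only the first crossing, and another that switches only the last.
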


In other words, a writhe-$2$ impostor is a front which is immune to the normal ruling proof strategy used for obtaining the lower bounds in \cref{thm:main}. A proof of \cref{conj:main} will therefore need to appeal to other topological features of the diagram, which is a subtle problem to overcome. 

\subsection{Further questions}\label{subsec:questions}

Beyond \cref{conj:main}, we include some other questions of interest. First, as referenced above, normal ruling invariants may be extracted from the Chekanov-Eliash\-berg DGA, a deep and widely used tool in Legendrian knot theory. It is known that there are nontrivial Legendrian knots whose DGA is stable-tame isomorphic to that of the max-tb unknot (see \cite[p. 821]{cornwell2016concordance} or \cite[Appendix A]{etnyre2020legendrian}), which in particular implies that the ruling invariants are identical to that of the unknot. Referencing \cref{def:impostor}, a \emph{strong impostor} is a front satisfying (1) and (3), and in place of (2), the requirement that the DGA is stable-tame isomorphic to that of the max-tb unknot. Our impostors constructed in \cref{thm:impostor} are smoothly isotopic to a subset of the pretzel knots referenced by \cite{cornwell2016concordance} and \cite{etnyre2020legendrian}. We do not know if they are Legendrian isotopic, nor have we directly computed the stable-tame isomorphism class of the DGA. Thus: 

\begin{question}
Are there infinitely many writhe-$2$ strong impostors?     
\end{question}

Finally, recall that a knot diagram $D$ is \emph{composite} if there is an embedded planar circle $C$ such that $|D \cap C| = 2$, and there are crossings of $D$ both inside and outside of $C$. A diagram is \emph{prime} otherwise. All of our constructions of smoothly hard Legendrian unknots are composite, and crucially so. 

\begin{question}
Is there a smoothly hard prime front projection of the max-tb unknot?     
\end{question}

\subsection{Organization} 

In \cref{sec:obstruct} we review background on normal rulings of Legendrian knots and use them to prove the various obstruction theorems and corollaries stated above. In \cref{sec:construct}, we construct hard unknots, obtaining the upper bounds of \cref{thm:main}. Finally, in \cref{sec:impostors} we construct writhe-$2$ impostors to prove \cref{thm:impostor} and discuss their implications and complications toward a complete proof of \cref{conj:main}.

\subsection{Acknowledgments} The authors thank Josh Sabloff for interest in our work, and James Hughes for feedback, corrections, and discussion on a preliminary draft. 

\section{Obstructing hardness}\label{sec:obstruct}

We recall the definition of a normal ruling of a Legendrian knot. Given a front projection $\mathcal{F}$, assume that the $x$-coordinates of all crossings and cusps are distinct. A \emph{normal ruling} of $\mathcal{F}$ is a subset of the crossings, called \emph{switches}, or \emph{switched crossings}, such that: 
\begin{enumerate}
    \item Performing $0$-resolutions of all switches yields a link of $2$-cusp max-tb unknots. We refer to the planar disks bounded by the unknots as \emph{ruling disks}. 

    \item Each strand of the $0$-resolution near a switch belongs to a different ruling disk.

    \item If $x_0$ is the $x$-coordinate of a switch, then the ruling disks adjacent to the switch are either disjoint or nested in an arbitrarily small interval $(x_0-\epsilon, x_0+\epsilon)$; see \cref{fig:rulingdef}. This condition is referred to as \emph{normality}.
\end{enumerate}

\begin{figure}[ht]
	\begin{overpic}[scale=.4]{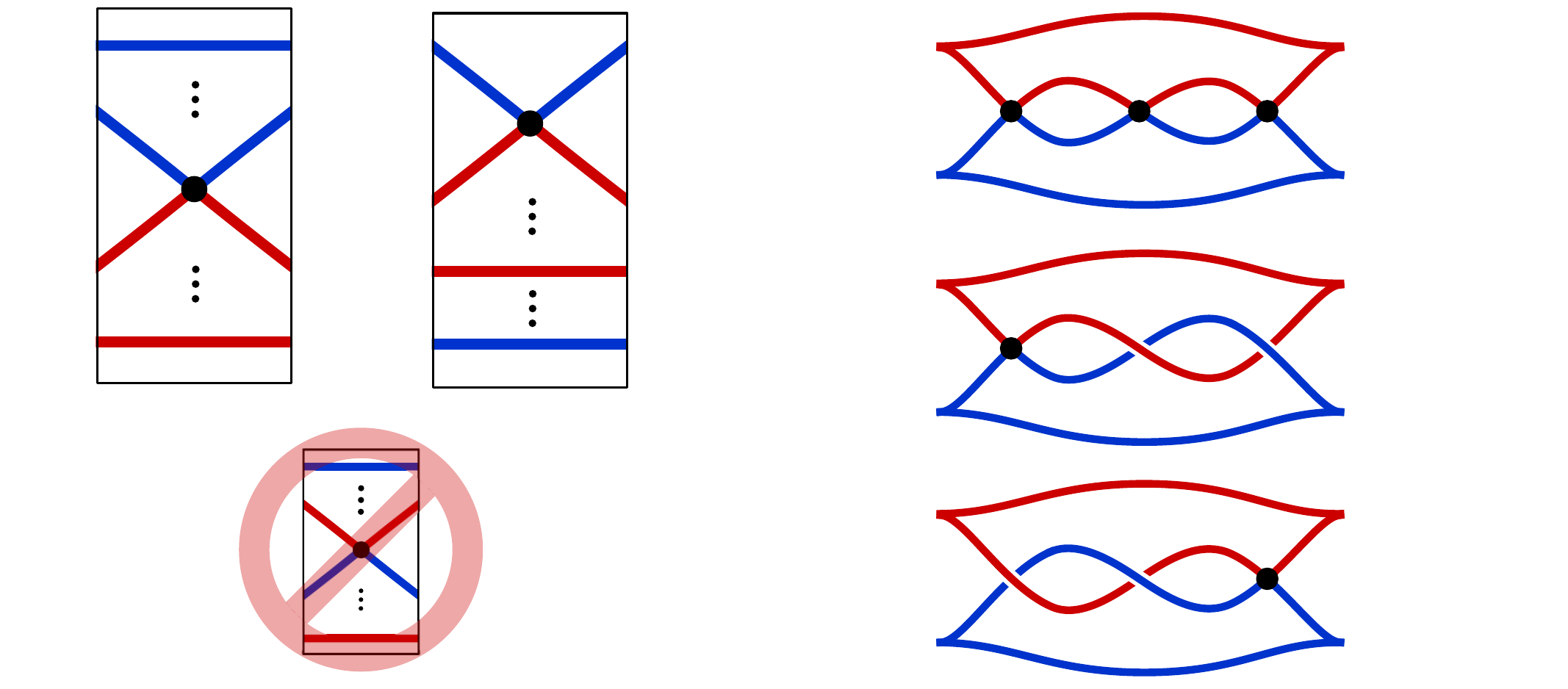}   
    
	\end{overpic}
	\caption{The normality condition near switched crossings on the left, and the three normal rulings of the trefoil on the right.}
	\label{fig:rulingdef}
\end{figure}

Not every Legendrian knot admits a normal ruling. By \cite{fuchs2003rulings,fuchs2004invariants,sabloff2005augmentations}, existence of a normal ruling of a knot is equivalent to existence of an augmentation of the Chekanov-Eliashberg DGA. In particular, a Legendrian knot admitting a ruling has maximal Thurston-Bennequin invariant in its topological type. 

Specifying to normal rulings of the max-tb unknot, observe that the standard $2$-cusp projection admits a unique (trivial) normal ruling. By \cite{chekanov2007pushkar}, counts of normal rulings are a Legendrian isotopy invariant, hence any front projection $\mathcal{F}$ of the max-tb unknot admits a unique normal ruling. One can moreover show that this unique ruling is \emph{orientable} in that every switch is a positive crossing. 

The combinatorial formula for the Thurston-Bennequin number gives 
\begin{align*}
    -1 &= \mathrm{writhe}(\mathcal{F}) - \tfrac{1}{2}(\# \text{ of cusps}) \\
    &= \mathrm{writhe}(\mathcal{F}) - n
\end{align*}
where $n$ is the number of ruling disks. Hence, $\mathrm{writhe}(\mathcal{F}) = n - 1$. To $\mathcal{F}$ we may associate a graph $G$ with vertices for each ruling disk and edges for each switch adjacent to two eyes. By \cite[\S 6]{chekanov2007pushkar}, the Euler characteristic of $G$ is invariant under Legendrian isotopy. From the standard $2$-cusp unknot front we have $\chi(G) = 1$, i.e.\ $n - (\# \text{ of switches}) = 1$. Combining this observation with the above calculation, we see
\[
\mathrm{writhe}(\mathcal{F}) = (\# \text{ of switches}).
\]
Summarizing the important facts: 

\begin{lemma}
Any front projection of the max-tb unknot admits a unique normal ruling and $\mathrm{writhe}(\mathcal{F})$ is given by the number of switches. Moreover, each switch is a positive crossing.
\end{lemma}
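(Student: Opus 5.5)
The lemma bundles three facts: uniqueness of the normal ruling, the equality $\mathrm{writhe}(\mathcal{F}) = \#\text{switches}$, and positivity of every switch. I would prove them in that order, since each step uses the one before.

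\emph{Uniqueness.} I would invoke Chekanov--Pushkar's theorem that the number of normal rulings (graded by $\chi$ of the ruling surface) is a Legendrian isotopy invariant. Any max-tb unknot front is Legendrian isotopic to the standard $2$-cusp front. This follows from the Eliashberg--Fraser classification of Legendrian unknots: max-tb forces $\mathrm{rot}=0$ by the Bennequin inequality. The standard front has no crossings, so its only normal ruling is the trivial one with a single ruling disk. Hence $\mathcal{F}$ has exactly one normal ruling, and its Euler characteristic equals $1$.

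\emph{Writhe formula.} Let the unique ruling have $n$ ruling disks and $s$ switches. Each ruling disk is a $2$-cusp eye, so the front has $2n$ cusps, and $\mathrm{tb}=-1$ gives $\mathrm{writhe}(\mathcal{F}) = n-1$. The ruling surface is obtained by gluing the $n$ disks with $s$ half-twisted bands, one at each switch. Its Euler characteristic is therefore $n-s$, which is exactly $\chi(G)$ for the graph $G$ described above. By the invariance of this graded count from the uniqueness step, $n-s=1$, and so $\mathrm{writhe}(\mathcal{F})=s$.

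\emph{Positivity.} This is the step I expect to be the main obstacle. First plan: because the ruling surface has $\chi=1$ and is connected, it is a disk. A surface built from disks and half-twisted bands is orientable exactly when the orientations of the disk boundaries can be chosen so that every band is compatible with them. A disk is orientable, so such a choice exists. Orient the knot as the boundary of the ruling surface. The two strands at a switch belong to two different ruling disks, and the band joins those disks. Compatibility of the induced boundary orientations then forces the two strands to both run in the same $x$-direction at the switch. A local picture check should show that a crossing in a front whose strands both move rightward, or both leftward, is positive. Granting that check, every switch is positive.

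If the orientability argument is delicate, the fallback is the grading. For the trivially graded (Maslov potential $0$, mod $2$) ruling counted in the ungraded ruling polynomial, switched crossings must have even degree. In a front, even-degree crossings have strands oriented the same way horizontally, hence are positive. The remaining task would then be to verify that the unique ruling counted invariantly is indeed $\mathbb{Z}$-graded, which holds because the $2$-cusp unknot's ruling is graded and graded counts are also invariant.
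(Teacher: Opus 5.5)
Your proposal is correct. The uniqueness and writhe steps are the paper's argument: Chekanov--Pushkar invariance applied to the standard $2$-cusp front, $\mathrm{tb}=-1$ giving $\mathrm{writhe}(\mathcal{F})=n-1$, and invariance of $\chi=n-s$ giving $n-s=1$.

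The difference is positivity. The paper only asserts that ``one can moreover show'' the unique ruling is orientable; you actually prove it, and your first plan works.

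\begin{itemize}
\item \emph{Connectedness.} You state that $\Sigma$ is connected without proof. It holds because every component of a disks-plus-bands surface has nonempty boundary, and $\partial\Sigma=K$ is connected. Equivalently, $G$ is a connected graph with $\chi(G)=1$, i.e.\ a tree, and any band-thickening of a tree is a disk.
\item \emph{Compatibility.} This step is simpler than you suggest. An orientation of $\Sigma$ restricts to each ruling disk, so on the pieces of each eye the knot orientation is that eye's boundary orientation. At a switch, the NW and NE ends of the upper $0$-resolved arc lie on the same eye. Hence the two crossing strands run in the same $x$-direction, which in a front means the crossing is positive.
\end{itemize}

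Your fallback is also valid once the wording is fixed. The ungraded count imposes no degree condition on switches. However, the $\mathbb{Z}$-graded count is also an invariant equal to $1$ for the unknot. So $\mathcal{F}$ has a graded ruling, and by ungraded uniqueness it is the unique ruling. Its switches therefore have degree $0$, hence even degree, hence are positive.

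The graded-count argument is probably the shortest justification of the paper's unproved claim. Your disk argument has the advantage of using only the $\chi$-count you already need for the writhe formula.
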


This is used in the proof of the following proposition, which establishes the writhe obstruction in \cref{thm:obstructions} and the lower bounds in \cref{thm:main}.

\begin{proposition}\label{prop:writhe1obstruct}
Let $\mathcal{F}$ be a front projection of a max-tb unknot with $\mathrm{writhe}(\mathcal{F}) \leq 1$. Then $\mathcal{F}$ admits either a simplifying Legendrian RI or a simplifying Legendrian RII move. In particular, $\mathcal{F}$ is not smoothly hard on the plane or the sphere.     
\end{proposition}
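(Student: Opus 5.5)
Let $\mathcal{F}$ be such a front. By the preceding lemma, $\mathcal{F}$ has a unique, orientable normal ruling with exactly $w := \mathrm{writhe}(\mathcal{F}) \in \{0,1\}$ switches, and $n = w+1$ ruling disks. The plan is to analyze the $0$-resolution of the switches, which is a link of $w+1$ two-cusp eyes. Every non-switched crossing of $\mathcal{F}$ is then a crossing between two such eyes, or a crossing of an eye with itself. A two-cusp eye has no self-crossings, so every non-switched crossing lies between two distinct ruling disks.

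\textbf{Case $w = 0$.} There is a single ruling disk and no switches, so $\mathcal{F}$ itself is a two-cusp eye with no crossings. This is the trivial diagram and there is nothing to prove; one can say it is vacuously not hard, or that it is already unknotted.

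\textbf{Case $w = 1$.} There are two ruling disks $E_1, E_2$, each a two-cusp eye, joined at one switch $s$. Away from $s$, the two eyes intersect in the front at the other crossings. Two two-cusp eyes in the plane cross an even number of times, with the switch resolved. I will use the normality condition at $s$: near $x_0$ the disks are either disjoint or nested.
\begin{itemize}
\item If there are no non-switched crossings, then $\mathcal{F}$ consists of two eyes joined at a single crossing. One of the cusps then lies adjacent to $s$, forming a Legendrian RI kink, which is a simplifying RI move.
\item In general, consider the upper and lower strands of each eye, which are graphs over intervals. I will take the leftmost (or rightmost) cusp among all four cusps; say it belongs to $E_1$. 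Then I follow the upper and lower strands of $E_1$ from that cusp until they first meet $E_2$.
\end{itemize}
The first meeting is either the switch $s$ itself or a crossing with a strand of $E_2$. If it is $s$ and no other crossing intervenes, the cusp together with $s$ bounds an RI kink. Otherwise the first meetings on the upper and lower strands produce crossings with $E_2$, and an innermost bigon between a strand of $E_1$ and a strand of $E_2$ is found. That bigon is bounded by two crossings and contains no cusps, so it is an RII bigon. Because the two eyes are graphs over $x$ away from cusps, such a bigon is a Legendrian RII configuration: two strands between consecutive crossings with no cusp between them. I will argue that an innermost such bigon, chosen by $x$-extent, contains no other strands in its interior, and hence is removable.

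The main obstacle is this last step: guaranteeing an \emph{empty} bigon, or a kink with nothing passing through it. Here I will use the fact that only two eyes exist, so every strand is one of four graphs (the upper or lower strand of $E_1$ or $E_2$). Between consecutive crossing $x$-values, consecutive intersections of a strand of $E_1$ with a strand of $E_2$ bound a region. Any other strand entering that region would have to cross one of the two bounding strands, producing a nearer intersection. Minimality of the chosen pair then rules this out, except when the intruding crossing is $s$. That exceptional situation is handled by noting that the ruling is orientable, so $s$ is positive; combined with normality, this forces the local picture to be a kink.

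Finally, smoothing a Legendrian RI or RII move gives a smooth crossing-decreasing move, on the plane and hence also on the sphere. So $\mathcal{F}$ is not smoothly hard.
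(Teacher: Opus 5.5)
There is a genuine gap at your central step, where you declare an innermost cusp-free bigon between a strand of $E_1$ and a strand of $E_2$ to be a Legendrian RII configuration. In a front the crossing data is forced by slopes. Suppose two cusp-free strands $a,b$ cross at consecutive points $x_1<x_2$, with $a$ above $b$ on $(x_1,x_2)$. At $x_1$ the strand $a$ has the larger slope, so $b$ is the overstrand. At $x_2$ it has the smaller slope, so $a$ is the overstrand. Such a bigon is therefore always a \emph{clasp}, and no RII move, Legendrian or smooth, removes it. A Legendrian RII (\cref{fig:reid_leg}) always has a cusp on one of the two arcs bounding the bigon. So emptiness of the bigon, which your minimality argument targets, is not the real issue.

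Your RI claim is also too weak as stated. Meeting $s$ first when leaving an extremal cusp does not give a kink; since every unswitched crossing involves both eyes, an RI loop appears only when $s$ is the only crossing of $\mathcal{F}$.

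A concrete check shows both steps fail. Your argument only uses that $\mathcal{F}$ carries a ruling with two eyes and a single positive switch. It never uses uniqueness of the ruling or that $\mathcal{F}$ is an unknot. Consider the standard $3$-crossing max-tb trefoil front, ruled by switching only its leftmost crossing. The leftmost cusp's strands first meet the other eye at $s$ with nothing intervening, and the two unswitched crossings form exactly a cusp-free clasp between the eyes. Yet this minimal diagram admits no crossing-decreasing move.

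The missing idea is what uniqueness of the ruling provides. A clasp between the two eyes would let you switch both of its crossings and obtain a second normal ruling (the configurations of \cref{fig:Obs2}), so clasps cannot occur. Even with clasps excluded, locating the RII takes more than an innermost-bigon argument. The paper splits $\mathcal{F}$ into the switch region and the two cusp regions. It then follows the strands outward from the switch, according to whether the switch is nested or non-nested. It shows the only permitted interactions are ribbon-type $R^*$-regions (pure braids) and the configurations of \textsc{Cases 1} and \textsc{2}. Finally, compactness forces the process to terminate in a Legendrian RII in which a cusp passes through a strand. Your treatment of the exceptional case where the intruding crossing is $s$ (positivity plus normality forcing a kink) is also asserted rather than argued.
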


\begin{proof}
The only max-tb unknot front with $\mathrm{writhe}(\mathcal{F}) = 0$ is the standard two-cusp projection, so assume that $\mathrm{writhe}(\mathcal{F}) = 1$. After a planar isotopy we may further assume that all crossings and cusps have distinct $x$-coordinates, hence $\mathcal{F}$ admits a unique normal ruling. As $\mathrm{writhe}(\mathcal{F}) = 1$, this unique ruling has one switched crossing at an $x$-coordinate we denote $x_{\mathrm{sw}}$. Decompose the front as 
\[
\mathcal{F} = \mathcal{F}_{\mathrm{LC}} \cup \mathcal{F}_{\mathrm{sw}} \cup \mathcal{F}_{\mathrm{RC}}
\]
where $\mathcal{F}_{\mathrm{sw}} := \mathcal{F}\cap \{|x-x_{\mathrm{sw}}|< \epsilon\}$, $\mathcal{F}_{\mathrm{LC}} = \mathcal{F} \cap \{x\leq x_{\mathrm{sw}} - \epsilon\}$, and $\mathcal{F}_{\mathrm{RC}} = \mathcal{F} \cap \{x\geq x_{\mathrm{sw}} + \epsilon\}$. Here $\epsilon > 0$ is chosen sufficiently small enough so that the switched crossing is the only crossing in $\mathcal{F}_{\mathrm{sw}}$. In this case, $\mathcal{F}_{\mathrm{sw}}$ is either a \emph{nested} or \emph{non-nested} switch region (see \cref{fig:Obs1}) and the left and right cusp regions each have two cusps. If neither cusp region contains crossings, then $\mathcal{F}$ admits a simplifying Legendrian RI move, so we may assume by reflective symmetry that the left cusp region $\mathcal{F}_{\mathrm{LC}}$ contains crossings. 

\begin{figure}
	\centering
	\begin{tikzpicture}
		\node at (0,0) {\includegraphics[scale=0.4]{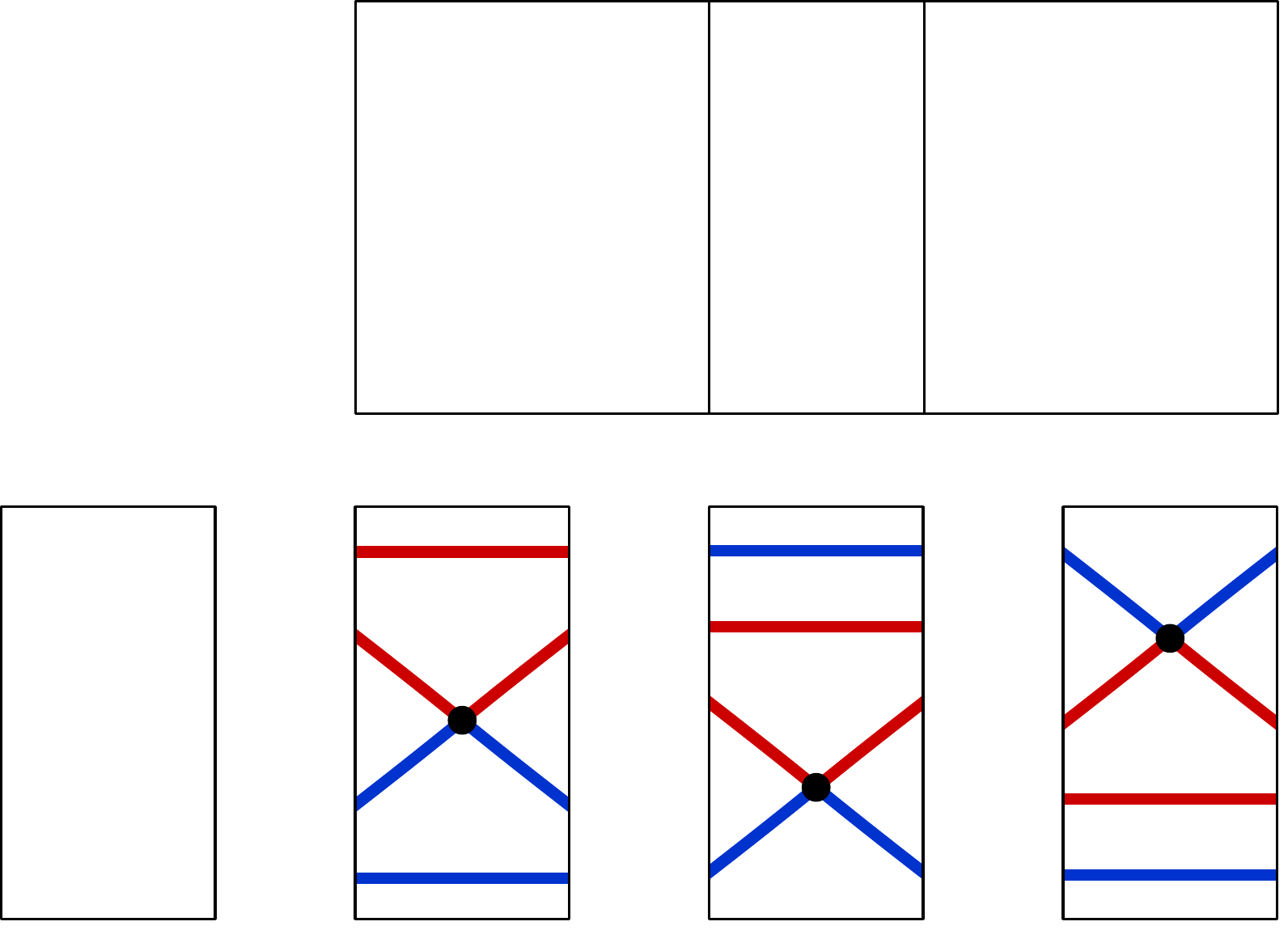}};
		\node at (-4,2.2) {$\mathcal{F}=$};
		\node at (-0.9,2.2) {$\mathcal{F}_{\text{LC}}$};
		\node at (1.5,2.2) {$\mathcal{F}_{\text{SW}}$};
		\node at (3.9,2.2) {$\mathcal{F}_{\text{RC}}$};
		
		\node at (-4.5,-2.1) {$\mathcal{F}_{\text{SW}}$};
		\node at (-3,-2.1) {$=$};
		\node at (0,-2.1) {or};
		\node at (2.9, -2.1) {or};
		\node at (-1.5,-4.2) {non-nested};
		\node at (1.5,-4.2) {nested};
		\node at (4.4,-4.2) {nested};
	\end{tikzpicture}
	\caption{Decomposition of a writhe-$1$ unknot front into three regions.}
	\label{fig:Obs1}
\end{figure}

Observe that the cusp regions cannot contain any of the three forbidden ``clasp" subregions indicated in the top row of \cref{fig:Obs2}. If they did, the modifications drawn in the bottom row of the same figure would produce distinct normal rulings of $\mathcal{F}$, contradicting uniqueness.

\begin{figure}
	\centering
	\includegraphics[scale=0.4]{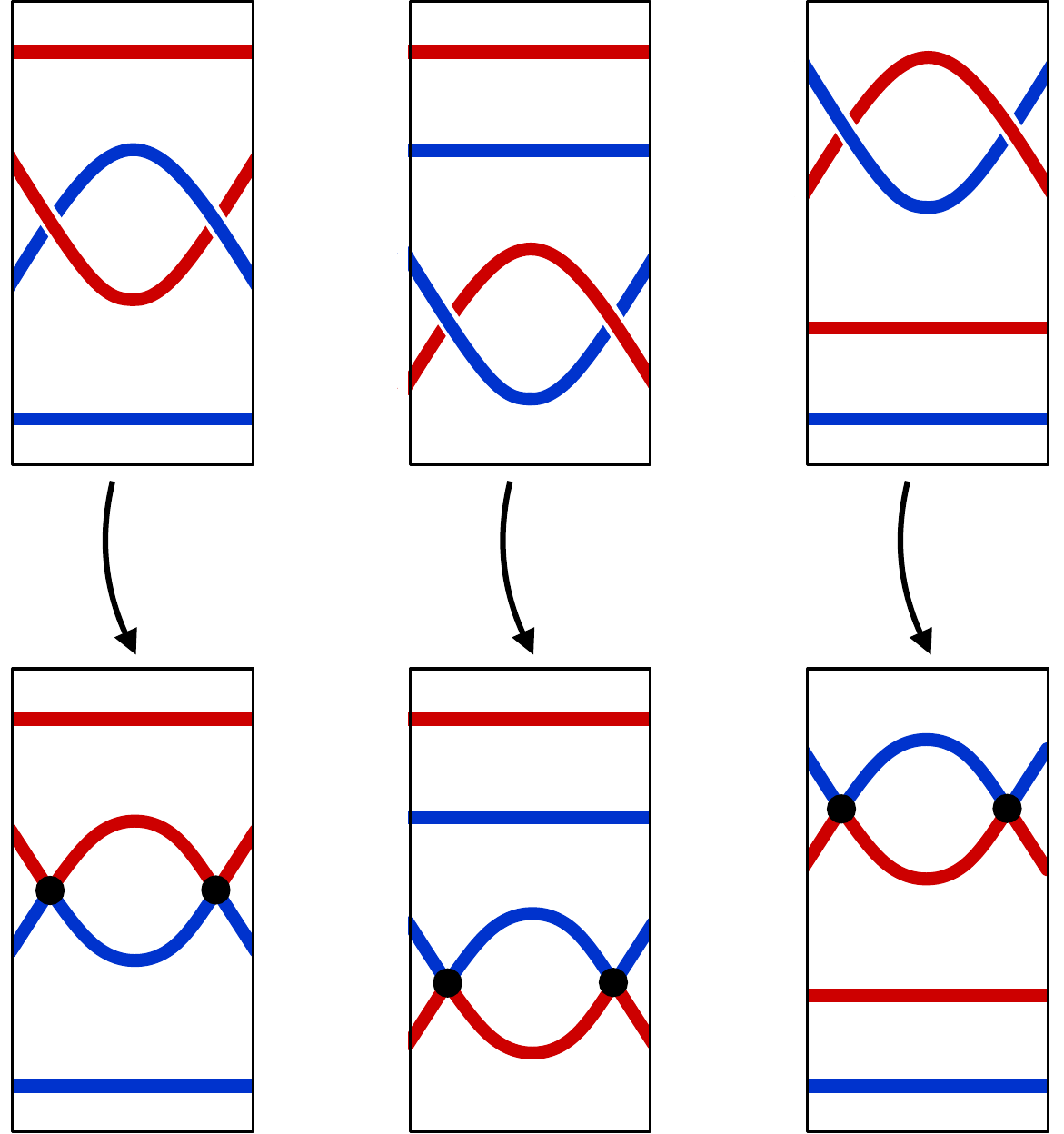}
	\caption{Configurations that violate uniqueness of normal rulings.}
	\label{fig:Obs2}
\end{figure}

We now proceed with casework based on the the type of switch region. 

\vspace{2mm}
\textsc{Case 1}: Non-nested switch region.
\vspace{2mm}

In the switch region we color the boundary of uppermost ruling disk red. Let $C$ denote the rightmost left cusp in $\mathcal{F}_{LC}$, which, up to reflective symmetry, we may assume is red. By assumption, there must be a first (from the right) crossing of the upper blue and lower red strand. Since $C$ is red, the upper blue strand must either intersect the lower red strand again, or intersect the upper red strand.

In the former case, the outlawed configurations in \cref{fig:Obs2} force the configuration pictured on the left of \cref{fig:Obs3}, where the blue ruling disk intersects the red ruling disk in a ribbon singularity (any other behavior of the lower blue strand would result in a clasp as in \cref{fig:Obs2}). We call such a region an $R^*$-region. Note that passing through an $R^*$-region from the right to the left is the identity map on the ordering of the strands (i.e.\ an $R^*$-region is a pure braid). Therefore, in general, we pass through some number of $R^*$-regions; if there are no crossings to the left of an $R^*$-region, there is a Legendrian RII as pictured on the right side of \cref{fig:Obs3}. 

\begin{figure}
	\centering
	\begin{tikzpicture}
		\node at (0,0) {\includegraphics[scale=0.3]{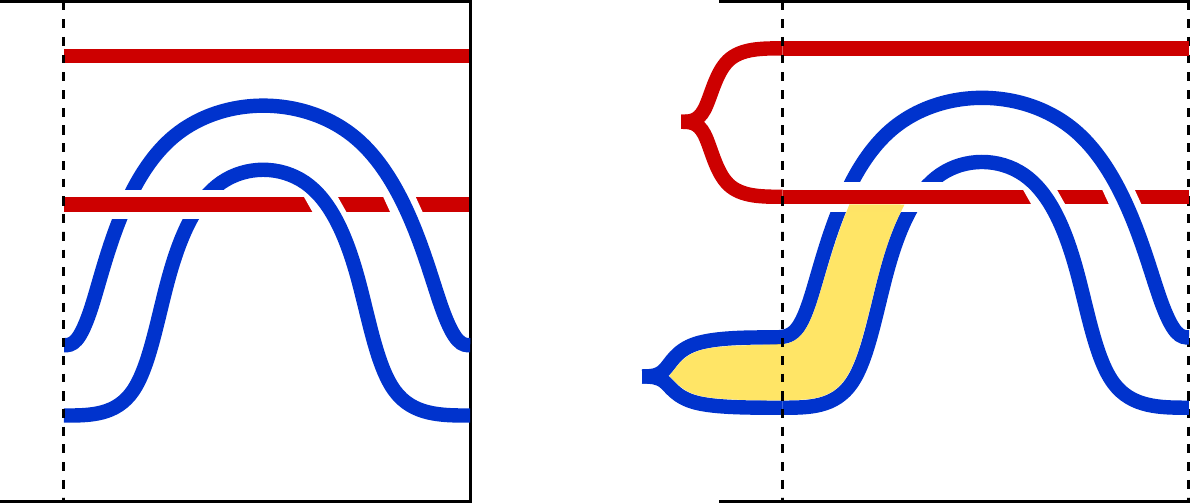}};
		\node at (-1.8,-1.6) {$R^*$};
		\node at (2,-1.6) {$R^*$};
	\end{tikzpicture}
	\caption{Identifying the simplifying Legendrian RII move after passing through some number of $R^*$-regions.}
	\label{fig:Obs3}
\end{figure}

Therefore, returning to the context of the first paragraph in \textsc{Case 1}, we may assume that after the initial red-blue crossing, the upper blue strand has a next intersection with the upper red strand. Up to a planar isotopy that doesn't affect the switched crossing, we further assume that the two crossings in question occur before any crossings involving the lower blue strand; see the left side of \cref{fig:Obs4}. If there are no additional crossings to the left of this region, there is a Legendrian RII as pictured on the right of the same figure. Consequently, we have reduced the proof to \textsc{Case 2}.

\begin{figure}
	\centering
	\includegraphics[scale=0.3]{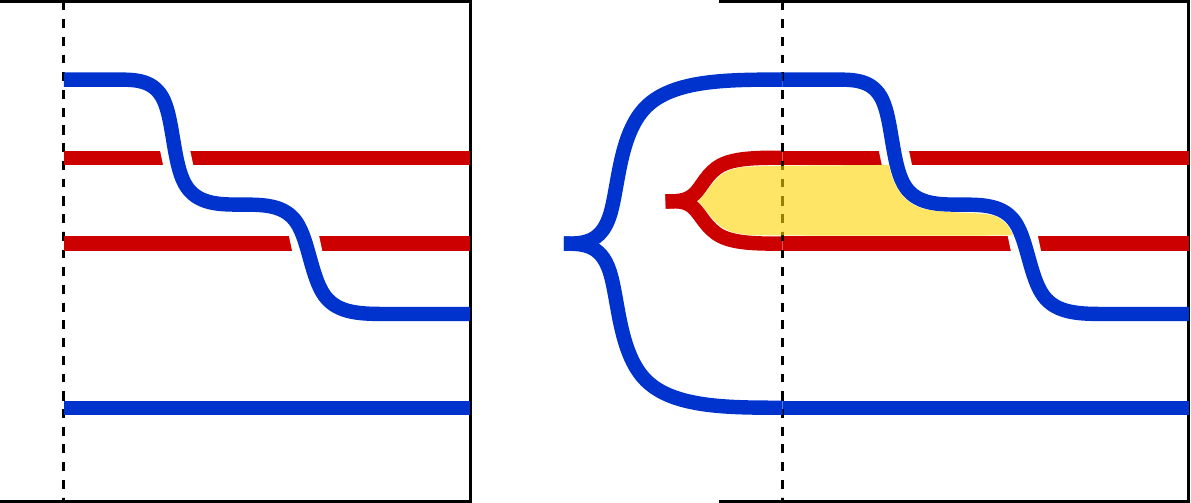}
	\caption{The other type of Legendrian RII move in \textsc{Case 1}.}
	\label{fig:Obs4}
\end{figure}

\vspace{2mm}
\textsc{Case 2}: Nested switch region.
\vspace{2mm}

There must be a first crossing, from the right, of a red and blue strand. By reflective symmetry, assume without loss of generality that the intersection is with the upper red and blue strands. In order to eventually form a red cusp, there must be a next red-blue crossing. It cannot be another crossing of the upper strands lest there be a clasp as in \cref{fig:Obs2}, so the next crossing must produce either one of the two regions indicated in the top row of \cref{fig:Obs5}.

\begin{figure}
	\centering
	\includegraphics[scale=0.3]{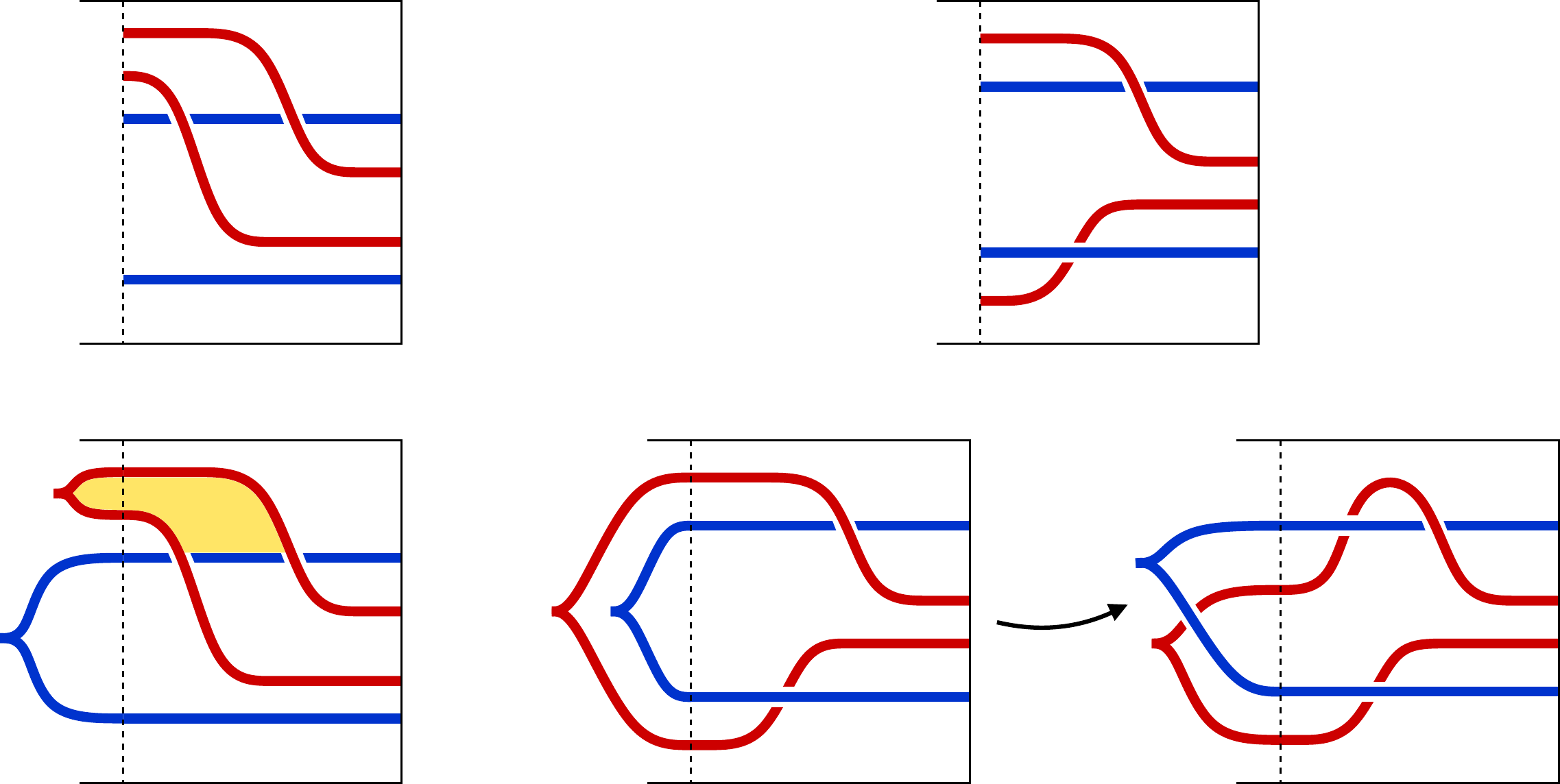}
	\caption{The possibilities in \textsc{Case 2}.}
	\label{fig:Obs5}
\end{figure}

In the left possibility, if there are no further crossings then there is a Legendrian RII as indicated below. Otherwise, we return to \textsc{Case 1}. In the right possibility, suppose first that there are no additional crossings to the left of the region. Then the left cusp region is determined as indicated, and after a Legendrian isotopy yields a forbidden clasp region. Therefore, there must be additional crossings, which takes us back to the start of \textsc{Case 2}. 

To conclude, the structure of the proof so far is to cycle between \textsc{Case 1} and \textsc{Case 2} repeatedly with the only terminating condition being a Legendrian RII move. By compactness of $\mathcal{F}$, the proof must terminate. Therefore, there exists a simplifying Legendrian RII.
\end{proof}

Next we establish the prohibited tangles of \cref{thm:obstructions}, stated separately as:

\begin{proposition}\label{prop:prohibited_tangles}
The oriented tangles depicted in \cref{fig:bad-tangles2}, up to rotation and total reversal of orientation (but not mirroring) cannot appear in a max-tb unknot front.
\end{proposition}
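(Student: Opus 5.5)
The strategy is to argue by contradiction using the uniqueness and orientability of the normal ruling recorded in the lemma above. Suppose a max-tb unknot front $\mathcal{F}$ has a smooth resolution containing one of the tangles (A), (B), (C) of \cref{fig:bad-tangles2}. First we make this concrete on the front. Any smooth tangle, realized inside a front, must have each overcrossing with negative slope and each undercrossing with positive slope, and vertical tangencies replaced by cusps. So we would enumerate, up to planar isotopy of fronts and Legendrian isotopy supported near the tangle, the finitely many ways the tangle can appear in $\mathcal{F}$. The orientation data fixes the sign of each crossing. We expect the tangles were chosen so that several of their crossings are negative, or so that their positive crossings sit in a clasp-like position. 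Rotation and total orientation reversal are handled by the symmetries of fronts: $(x,z)\mapsto(-x,z)$ and $(x,z)\mapsto(x,-z)$ both preserve the front condition after accounting for crossing data, and orientation reversal preserves all crossing signs. Mirroring is not allowed, because it does not preserve the slope rule.

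Second, for each resulting local front picture we study how the unique normal ruling $\rho$ of $\mathcal{F}$ can restrict to the tangle region. By the lemma every switch is positive, so negative crossings of the tangle are never switches. For each crossing we record which ruling disks pass through the tangle and how their boundaries are paired. The goal is to show that in every case one of two things happens.
\begin{itemize}
\item The restriction of $\rho$ violates the ruling axioms: a strand pairing that cannot be completed consistently, a non-normal switch, or a negative switch.
\item A local modification produces a second normal ruling, exactly as with the forbidden clasp configurations of \cref{fig:Obs2}. This means toggling a pair of crossings between switched and unswitched, for instance two positive crossings between the same two ruling disks that bound a bigon-type region with compatible nesting. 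Such a second ruling contradicts uniqueness.
\end{itemize}
The key point is that the toggle is local to the tangle and preserves normality elsewhere, so the global ruling stays valid. Concretely, we would prove a general local lemma: if two crossings between the same pair of ruling-disk boundaries bound a region free of other switches, with the right nesting, then toggling both switches yields another normal ruling. We would then check that each of (A), (B), (C), in each realization, contains such a pair.

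The main obstacle is the first step. A smooth tangle can be realized by fronts in many ways, for example with cusps inserted or extra zigzags, and these change which strands are upper or lower. We must also handle strands of the tangle belonging to the same ruling disk versus different disks. To control this, we would first try to normalize: use planar isotopy to make every arc of the tangle either cusp-free or carry a minimal number of cusps, then argue that additional cusps on an arc only create further ruling disk boundaries that do not obstruct the toggle. If the toggle argument fails in some configuration, the fallback is an Euler characteristic count. Such a configuration would force a cycle in the graph $G$ of ruling disks and switches, contradicting $\chi(G)=1$ from \cite{chekanov2007pushkar}.
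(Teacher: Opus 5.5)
Your framework is the right setting: negative crossings are never switches, and the contradiction comes either from the ruling axioms or from a second ruling. But the proposal never supplies the observation that makes the argument work, and the step you flag as the main obstacle is resolved in the wrong direction.

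The missing idea is this: a strand of the tangle that meets only unswitched crossings lies on the boundary of a \emph{single} ruling disk, i.e.\ a single two-cusp eye. Consequently it carries at most two cusps, it cannot cross itself, and it cannot cross another strand of the same color. This gives three things.
\begin{enumerate}
\item It settles (B) at once. All three crossings are negative, so the looping strand is unicolored with an unswitched self-crossing. There is then no ruling at all, hence nothing to toggle.
\item In (C) it forces the positive self-crossing to be a switch and the loop to be an entire eye.
\item It makes the enumeration of front realizations finite, with $0$, $1$, or $2$ cusps per unswitched strand.
\end{enumerate}
Without this bound, your claim that there are ``finitely many ways the tangle can appear'' fails: a strand may a priori carry arbitrarily many cusps, and these cannot be isotoped away. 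Your plan to show that extra cusps ``only create further ruling disk boundaries that do not obstruct the toggle'' points the wrong way. Cusps do not create new ruling disks, and extra cusps on an unswitched strand are simply impossible.

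A toggle lemma is also not what disposes of (A); the contradiction there is a slope and orientation count. When one strand is cusp-free, normalize it to slope $0$. The over/under pattern then forces the other strand's slopes at the three crossings to be positive, negative, positive. With the prescribed orientations (all three crossings negative), this requires at least four cusps on that strand. The cases where the first strand has one or two cusps are similar short case splits on how the crossings distribute over its smooth pieces. Each forces more than two cusps on the other strand, contradicting the eye bound.

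Uniqueness of the ruling is needed only for (C). There the paper exhibits explicit extraneous non-orientable rulings in the three minimal-cusp cases, sometimes after Reidemeister moves that do not affect the original ruling. Even there, your claim that a local toggle automatically ``preserves normality elsewhere'' needs proof. Toggling two crossings at $x_1<x_2$ exchanges arcs of two ruling disks across the whole strip $x_1<x<x_2$. That can change the nesting condition at other switches lying in the strip.

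Finally, the Euler characteristic fallback cannot help. (A) and (B) involve no switches and (C) involves one, and every switch joins two distinct disks. So no cycle in $G$ can be forced by these tangles.
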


\begin{figure}[ht]
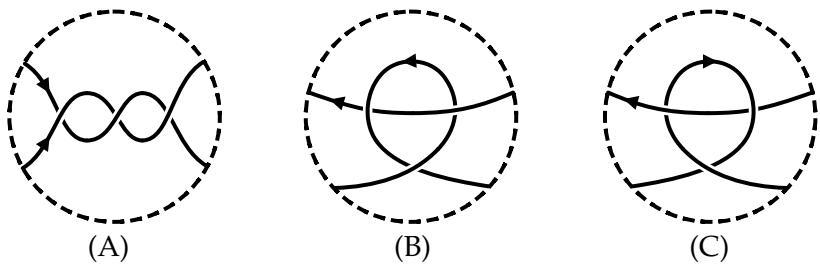

	\centering
	\begin{overpic}[scale=.6]{Figures/bad-tangles.pdf}
     \put(21,0){(A)}
     \put(48.5,0){(B)}
     \put(75,0){(C)}
	\end{overpic}
	\caption{Tangles prohibited in a max-tb unknot front.}
	\label{fig:bad-tangles2}
\end{figure}

\begin{proof}
Recall that the unique normal ruling of a max-tb front has the property that every switched crossing is a positive crossing. 

Beginning with the tangle (A), note that each of the three crossings is negative. If the tangle were found in a max-tb unknot front, then none of the three crossings could be switched. The lack of switches means that each strand of the tangle can have $0$, $1$, or $2$ cusps. We will systematically refer to the strand beginning on the lower-left of the tangle as the red strand; the strand beginning on the upper left will be called the blue strand. If one of the strands has $0$ cusps, we can normalize it to have slope $0$. Assume that the slope-$0$ strand is the red strand; see the left of \cref{fig:bad-tangle-a}. Necessarily, the slopes of the blue strand at the crossings, from left to right, are positive, negative, and positive. With the assumed orientation, this is only possible with at least $4$ cusps, which is a contradiction.  

\begin{figure}[ht]
	\centering
	\begin{overpic}[scale=.6]{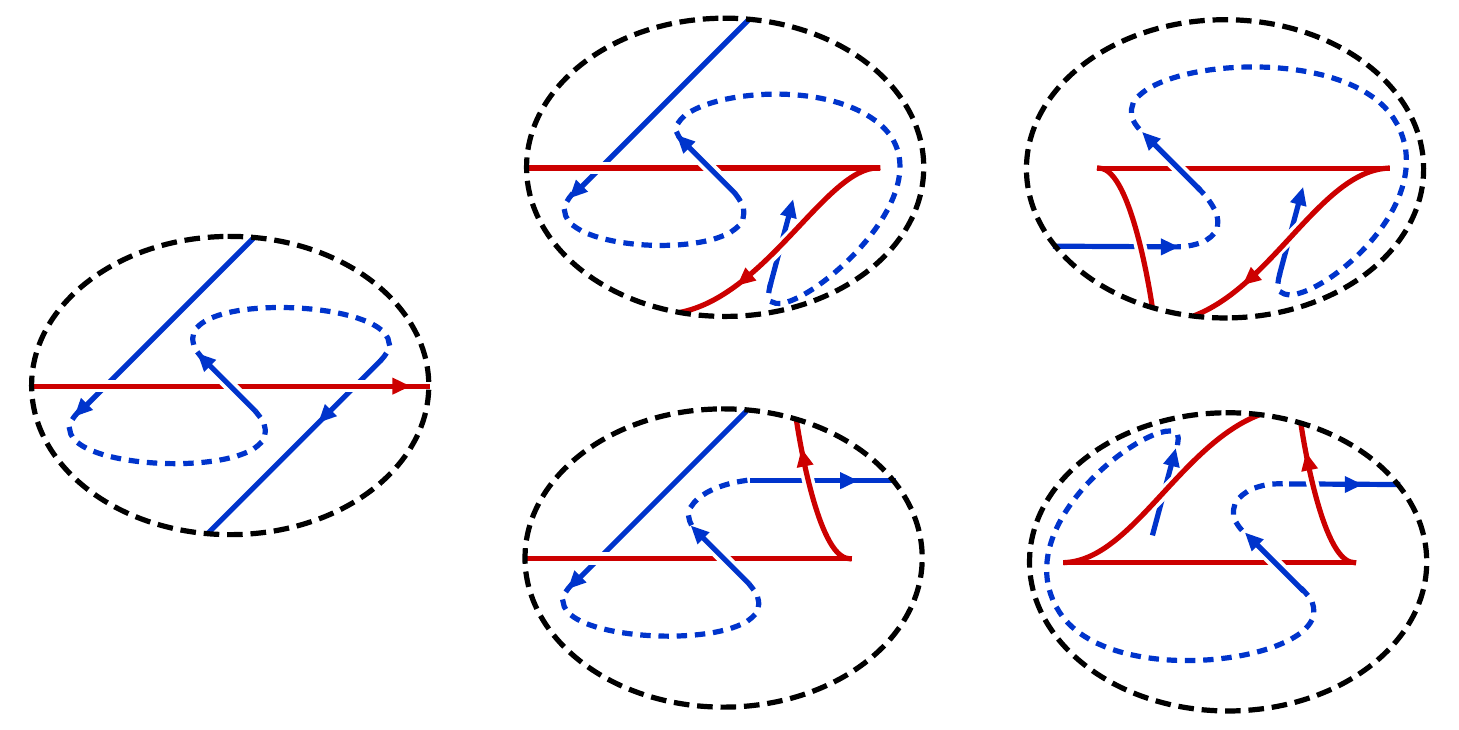}
     
	\end{overpic}
	\caption{The minimal cusp cases of tangle (A). Solid lines are Legendrian realized, dashed curves are topologically necessary paths.}
	\label{fig:bad-tangle-a}
\end{figure}

If the red strand has one cusp, then at least two of the crossings must occur on one of the two smooth loci of the red strand. If all three of the crossings occur on the same smooth locus, then the argument from the previous case applies, so assume that two of the crossings occur on one smooth locus. The middle of \cref{fig:bad-tangle-a} then depicts the two possible cases, up to natural symmetries. In either case the slopes and orientations of the blue strand force more than two cusps. The argument for the third case, where the red strand has two cusps, is similar. Note that a hypothetical normal ruling forces one red cusp to be an up-cusp and the other to be a down-cusp. We may also assume that each of the three smooth loci of the red projection supports one intersection, lest we reduce to previous cases. The two cases are depicted on the right side of \cref{fig:bad-tangle-a} and again require more than two blue cusps.

Moving on to tangle (B), note that again all three crossings are negative, hence none of them can be switched in a max-tb unknot ruling. This is impossible, as the strand with the self-crossing would be unicolored, violating a defining property of a ruled front. 

In the tangle (C), the self-crossing of the looping strand is now positive, while the other two crossings are negative. By the same argument in the previous paragraph, we see that the positive crossing must be switched. As the other crossings cannot be switched, the switched loop must be an eye. Moreover, by orientation considerations the color of the strand passing through the loop must be distinct from the color adjacent to the switch. As this strand has no switched crossings, it has either $0$, $1$, or $2$ cusps, as in the analysis of tangle (A). Thus, there are  three possibilities for the front, depicted in the top row of \cref{fig:bad-tangle-c}. Each of the configurations leads to a contradictory extraneous (non-orientable) ruling, possibly after some Reidemeister moves which do not affect the initial ruling, as depicted in the lower row of the figure. 
\end{proof}

\begin{figure}[ht]
	\centering
	\begin{overpic}[scale=.6]{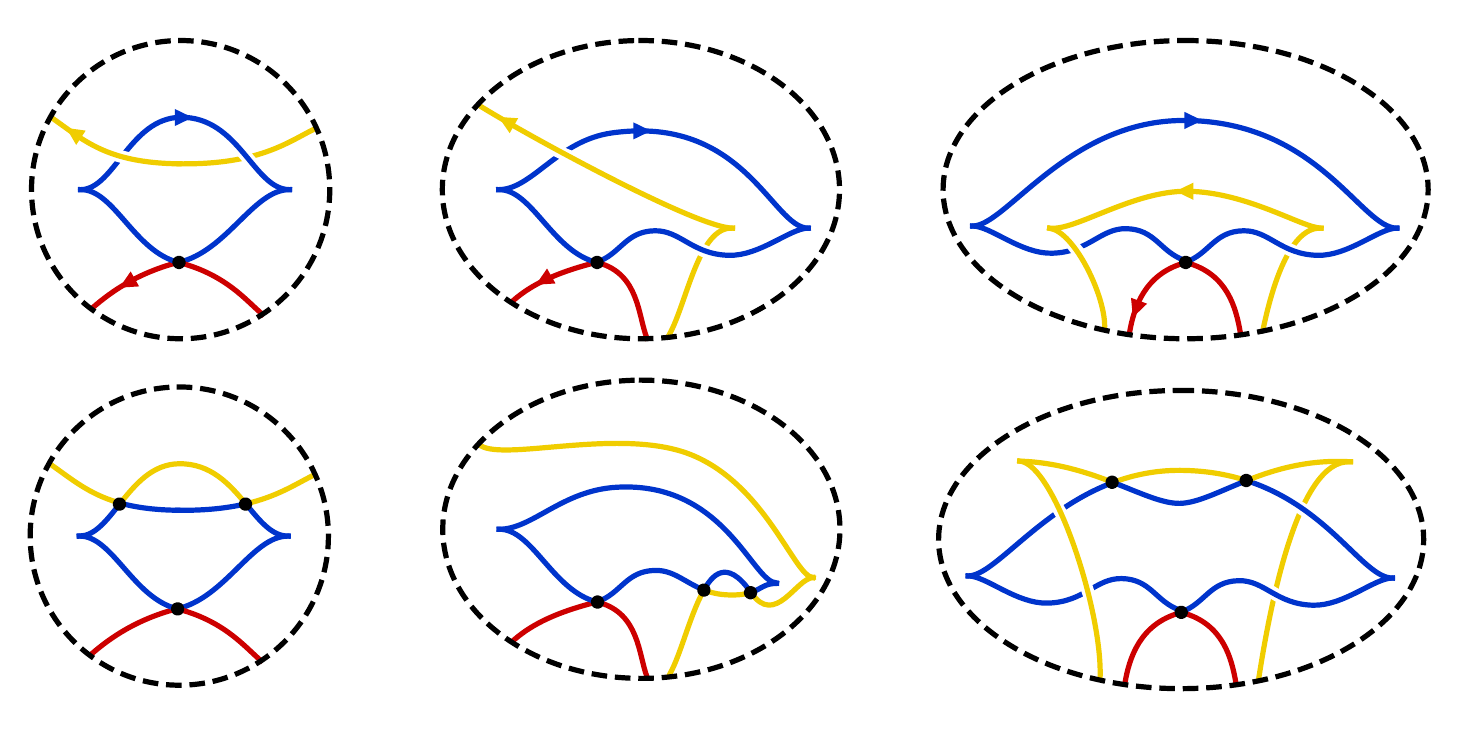}
     
	\end{overpic}
	\caption{The minimal cusp cases of tangle (C) in the top row, and the resulting extraneous normal rulings in the bottom row.}
	\label{fig:bad-tangle-c}
\end{figure}

\begin{remark}
The mirroring and orientation of the prohibited tangles in \cref{fig:bad-tangles} is important. For example, see \cref{fig:tangle2} for a max-tb unknot which contains the mirror of a tangle prohibited by \cref{prop:prohibited_tangles}.
\end{remark}

\begin{figure}[ht]
	\centering
	\begin{overpic}[scale=.6]{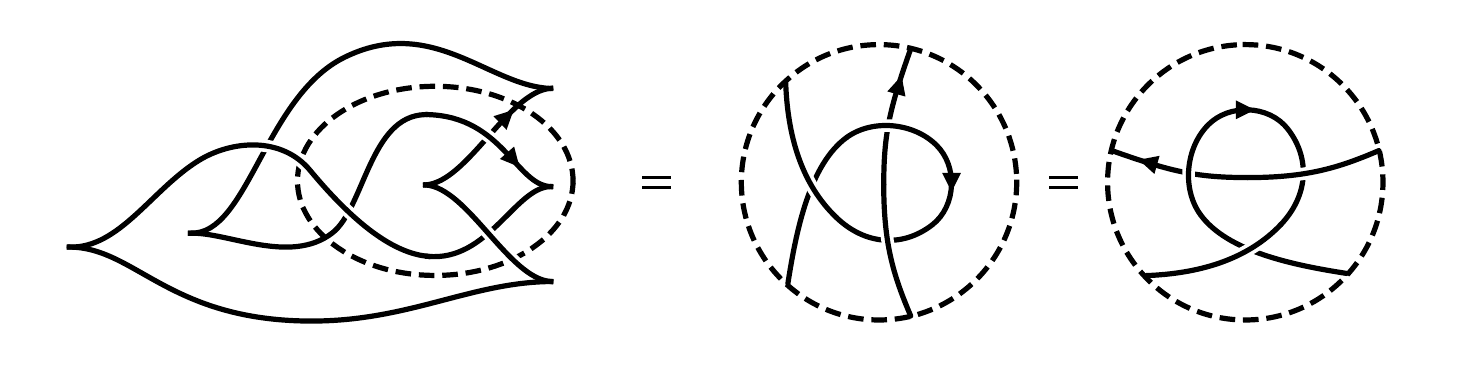}
     
	\end{overpic}
	\caption{A max-tb unknot front with the mirror of a prohibited tangle.}
	\label{fig:tangle2}
\end{figure}

With the proof of \cref{thm:obstructions} completed by the previous two propositions, we prove \cref{cor:families}.

\begin{proof}[Proof of \cref{cor:families}.]
In \cref{fig:generalized} we recall the construction of generalized Goeritz unknots, generalized Freedman-He-Wang unknots, and Hass-Nowik unknots. The integral boxes represent half-twists (so that the classical Goeritz unknot corresponds to $n=3$); a box labeled $K$ is any knot, $\bar{K}$ is its mirror, and the two strands passing through the box form a cut open copy of the knot and its blackboard pushoff. From the diagrams it is immediate that the generalized Goeritz and Hass-Nowik unknots have prohibted tangles of type (A), so these can never be max-tb unknots. The generalized Freedman-He-Wang unknot has writhe $0$, so any hard diagram arising from this construction cannot be a max-tb unknot. 
\end{proof}

\begin{figure}[ht]
	\centering
	\begin{overpic}[scale=.2]{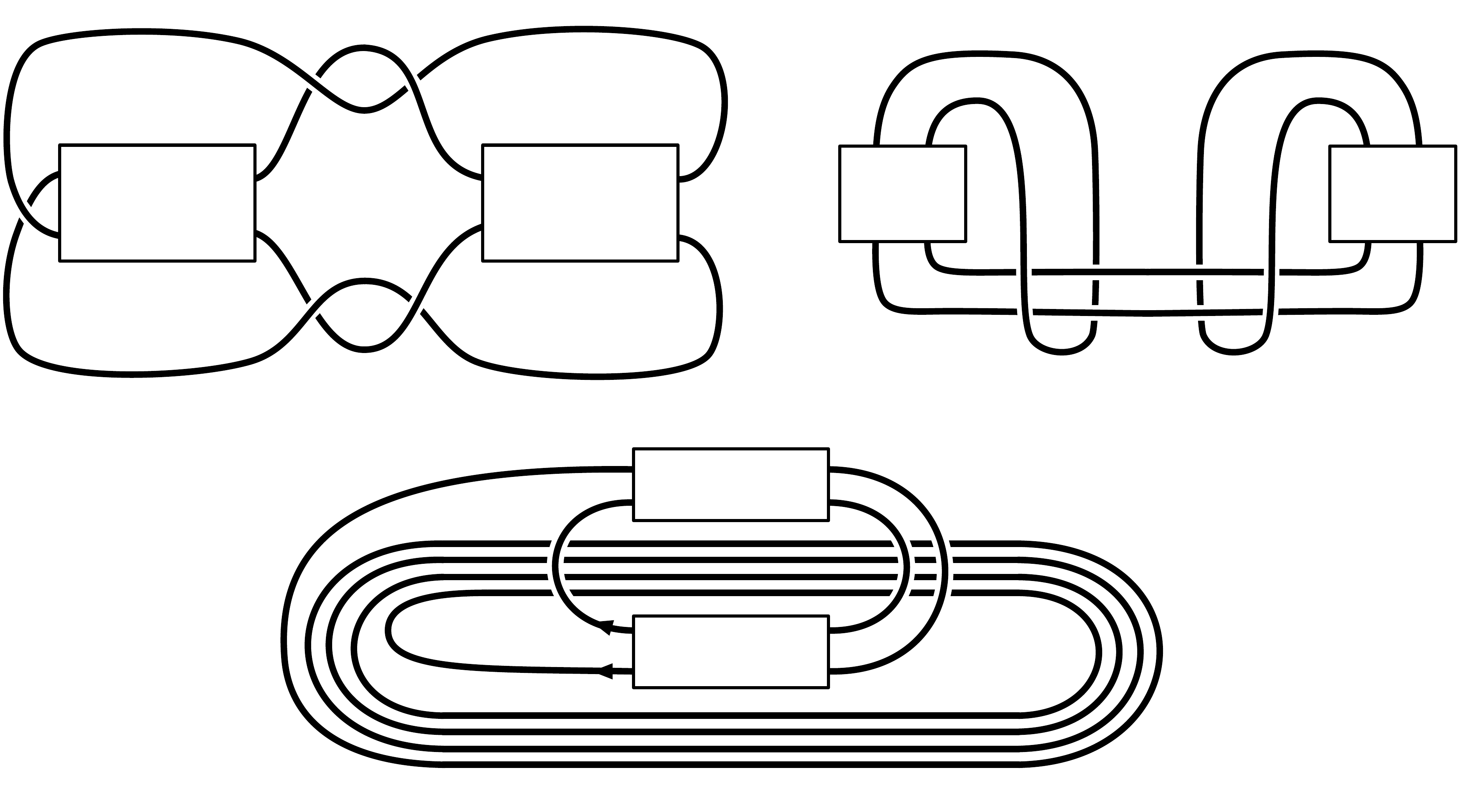}
        \put(10,41){$n$}
        \put(37,41){$-n$}

        \put(60,41.25){$K$}
        \put(93.75,41.25){$\bar{K}$}

        \put(46,21){$2n-1$}
        \put(47,10){$-2n$}
	\end{overpic}
	\caption{Generalized Goeritz (top left, $n\geq 3$), generalized Freedman-He-Wang (top right), and Hass-Nowik (bottom, $n\geq 4$) unknots.}
	\label{fig:generalized}
\end{figure}

In preparation for the proof of \cref{thm:KL}, we record another useful observation using the same idea underlying \cref{prop:prohibited_tangles}. 

\begin{proposition}\label{prop:two-switch}
If a tangle of the form in \cref{fig:two-switch} appears in a max-tb unknot front, then both crossings must be switched.     
\end{proposition}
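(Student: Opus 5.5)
We do not see \cref{fig:two-switch}, but the phrase ``the same idea underlying \cref{prop:prohibited_tangles}'' points to a specific picture. We assume the tangle consists of two positive crossings between two strands forming a clasp or twist region, possibly the two crossings of a loop-like configuration. The plan is to argue by contradiction using the unique normal ruling of the max-tb unknot front. By the lemma, this ruling is orientable, so only positive crossings can be switches. We suppose that at most one of the two crossings is switched and derive a contradiction.

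First, we rule out the case where neither crossing is switched. Then the two strands of the tangle keep their ruling disk boundaries through the tangle. We normalize each strand by the number of cusps it carries inside the tangle (0, 1, or 2), exactly as in the analysis of tangle (A). We then examine the slopes forced at the two crossings by the front rule that more negative slope passes over. Combined with the prescribed orientations, these slopes should force either:
\begin{itemize}
\item a cusp count that is incompatible with the strand being part of a single ruling disk boundary; or
\item a self-intersection of one ruling disk, which is the ``unicolored strand'' contradiction from tangle (B); or
\item a clasp configuration as in \cref{fig:Obs2}, which would produce a second normal ruling and contradict uniqueness.
\end{itemize}

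Second, we treat the case where exactly one crossing is switched. Around that switch we apply the normality condition: the two adjacent ruling disks must be disjoint or nested near the switch. We then track colors along the strands to the other, unswitched crossing. The goal is to show that either the two strands at the unswitched crossing belong to the same ruling disk, which is impossible since a ruling disk boundary has no self-crossings, or their disks interlace in a way that violates normality at the switch. Failing that, toggling the unswitched crossing to a switch should give a second valid normal ruling, as in the bottom rows of \cref{fig:Obs2} and \cref{fig:bad-tangle-c}, again contradicting uniqueness. When both crossings are switched, the positivity of the crossings is consistent with orientability, so this is the only surviving option.

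The main obstacle is the one-switch case. Here the global shape of the ruling disks outside the tangle is unknown, so we must enumerate the local cusp and nesting possibilities carefully, as in the minimal-cusp pictures for tangles (A) and (C). For each local possibility we must exhibit either a normality failure or an extraneous ruling, possibly after Legendrian Reidemeister moves away from the switches. The first approach to try is to show that the colors at the unswitched crossing are forced to be distinct and non-interlaced, so that switching it yields a second ruling directly.
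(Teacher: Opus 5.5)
There is a genuine gap. Your framework is the right one: the ruling is unique and orientable, so only positive crossings switch; you split by how many of the two crossings are switched; and you conclude via an extraneous ruling. But as written neither case is actually proved, and you have placed the difficulty in the wrong case.

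As its use in the proof of \cref{thm:KL} shows, the tangle is a two-crossing clasp of two antiparallel strands with both crossings positive. The one-switch case, which you call the main obstacle, is settled by a one-line coloring argument that your proposal does not find:
\begin{itemize}
\item At a positive front crossing both strands move in the same $x$-direction. So the $0$-resolution at a switch is the oriented smoothing, which joins the incoming half of each strand to the outgoing half of the other.
\item Since the strands are antiparallel, at either crossing this smoothing joins the two arcs that bound the bigon between the crossings.
\item Hence if exactly one crossing is switched, both bigon arcs (which meet no other crossings) lie on one ruling disk. They cross again at the other, unswitched crossing, which is impossible because eyes have no crossings.
\end{itemize}
Normality and cusp enumeration play no role here. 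Your fallback of ``toggling the unswitched crossing'' also does not make sense, because that crossing's strands already share a color.

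The no-switch case is where the real work lies, and there your three bulleted outcomes are a menu rather than an argument. Two of them cannot occur. With no switches, the two strands are unicolored with distinct colors $A\neq B$, so there is no same-color crossing as in tangle (B). The unswitched clasp also admits front realizations with few cusps, so there is no cusp-count contradiction as in tangle (A).

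The contradiction must come from uniqueness, via a specific operation: switch both crossings. This closes the bigon into its own ruling disk and merges the remainders of $A$ and $B$ into one. The result is a second ruling, provided the bigon loop is an eye and normality holds at the two new switches. The paper makes this checkable in three steps: normalize the leftmost crossing, remove cusps on the arcs from it to the tangle boundary, and reduce to $0$ or $1$ cusp on the rest of the undercrossing strand. In both resulting pictures the doubly switched ruling is normal.

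Your third bullet points in this direction. However, citing the clasps of \cref{fig:Obs2} is not the same as identifying this particular extraneous ruling and verifying that it is normal.
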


\begin{figure}[ht]
	\centering
	\begin{overpic}[scale=.5]{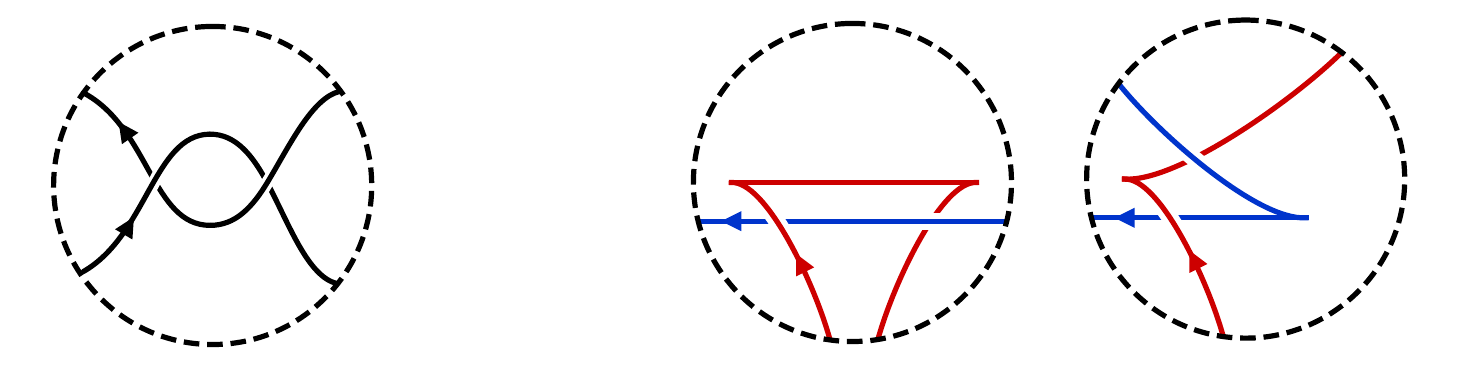}
     
	\end{overpic}
	\caption{A tangle (left) requiring all crossings to be switched.}
	\label{fig:two-switch}
\end{figure}

\begin{proof}
It is not possible for exactly one of the crossings to orientably switch, for otherwise there would be an unswitched intersection of the same color. Thus, assume for the sake of contradiction that neither of the crossings are switched.

Normalize the leftmost crossing so that the overcrossing strand has more negative slope. We additionally may assume without loss of generality that there are no cusps on the segments from this crossing to the boundary of the tangle, i.e.\ the segments with the arrows in \cref{fig:two-switch}, as these have no other intersections with the rest of the tangle. Now we consider all possibilities of minimal cusp examples according to whether the rest of the undercrossing strand has $0$, $1$, or $2$ cusps. By symmetry it suffices to consider the case of $0$ or $1$ cusp. The two minimal cusp examples are given on the right side of  \cref{fig:two-switch}, and both lead to extraneous rulings after switching both crossings (c.f.\ \cref{fig:bad-tangle-c}). This contradicts uniqueness of rulings, so both crossings must be switched.  
\end{proof}

Next, we consider the hard unknot diagrams of Kauffman and Lambropoulou. There are a few variations (see \cite[p.\ 33]{kauffman2011collapsing}) that can each be obstructed by a similar argument, so we restrict to their main construction, which we recall now. Given a tuple $(a_1, a_2, \dots, a_{2n})$ of positive natural numbers with $n\geq 2$, Kauffman and Lambropoulou prove that the diagram in \cref{fig:KLrational} is hard. (The reader may quickly verify that the diagram is not hard on the sphere when $n=1$.)  

\begin{figure}[ht]
	\centering
	\begin{overpic}[scale=.36]{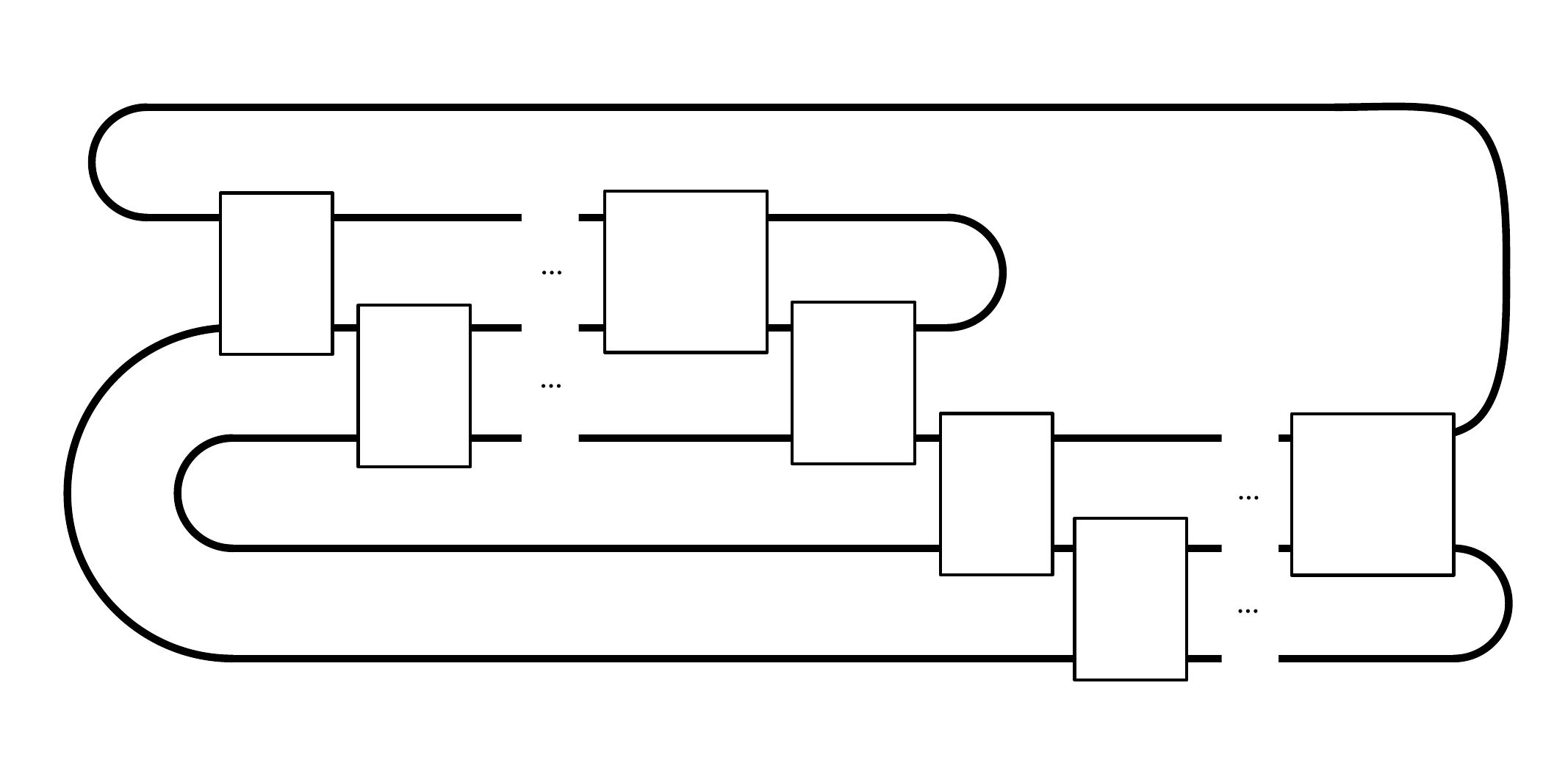}
      \put(15,30.5){\small $\pm a_1$}
      \put(24,23.5){\small $\mp a_2$}
      \put(39.25,30.5){\small $\pm a_{2n-1}$}
      \put(51.25,23.5){\small $\mp a_{2n}$}

      \put(61,16.25){\small $\mp a_1$}
      \put(69.5,9.75){\small $\pm a_2$}
      \put(83,16.25){\small $\mp a_{2n-1}$}
	\end{overpic}
    \vskip-0.3cm
	\caption{Kauffman-Lambropoulou hard unknots.}
	\label{fig:KLrational}
\end{figure}

\begin{proof}[Proof of \cref{thm:KL}.]
If both strands entering the left of the central twist box labeled $\mp a_{2n}$ are oriented in the same direction, then the writhe of the diagram is $\mp a_{2n}$; otherwise, if the strands have opposite orientation the writhe is $\pm a_{2n}$. If the diagram is a max-tb unknot, then \cref{thm:obstructions} requires the writhe to be $a_{2n}\geq 2$. Therefore, under this assumption, there are two cases to consider: the choice of signs leading to a central box labeled $a_{2n}$ and the strands entering the box with the same orientation, or the choice of signs leading to a central box labeled $-a_{2n}$ and the strands entering the box with opposite orientation. 

\vspace{2mm}
\textsc{Case 1}: Central box $a_{2n}$, strands in the same direction.
\vspace{2mm}

We will show that this case cannot occur. Orient the strands passing through the $a_{2n}$ box from left to right. Let $s\in S_3$ denote the permutation induced by the the $3$-braid $\sigma_2^{-a_1}\sigma_1^{a_2}\cdots \sigma_2^{-a_{2n-1}}$, where we index (horizontal) strands from the bottom. Note that $s^{-1}(3) \neq 3$, for otherwise the strand exiting the upper right of the $a_{2n}$ box would turn around, pass backward through the above $3$-braid, and exit the top left of the $-a_1$ box heading west. It would then circle around the top of the diagram and enter the upper right of the $a_{2n-1}$ box, once again emerging from the top left of the $a_1$ box in a westward direction. But this contradicts the fact that the strands passing through the $a_{2n}$ box are oriented eastward. 

Assume for the sake of contradiction that $s^{-1}(3) = 2$. Then the upper left $3$-braid region determines a westward orientation for the strand emanating from the lower left of the $-a_1$ box, and thus an eastward orientation for the bottom horizontal strand. Note that the upper strand on the left side of the $-a_1$ box must be oriented eastward, since this strand enters the $a_{2n}$ box from its left after passing through the $3$-braid region. This, then, means that the strand emanating from the upper right of the $a_{2n-1}$ box is oriented eastward, and because $s^{-1}(3)=2$, that all three lower strands are eastwardly-oriented. See \cref{fig:KLrational2}. The case $s^{-1}(3) = 1$ is nearly identical, and is left to the reader. These arguments show that \textsc{Case 1} cannot occur.

\begin{figure}[ht]
	\centering
	\begin{overpic}[scale=.36]{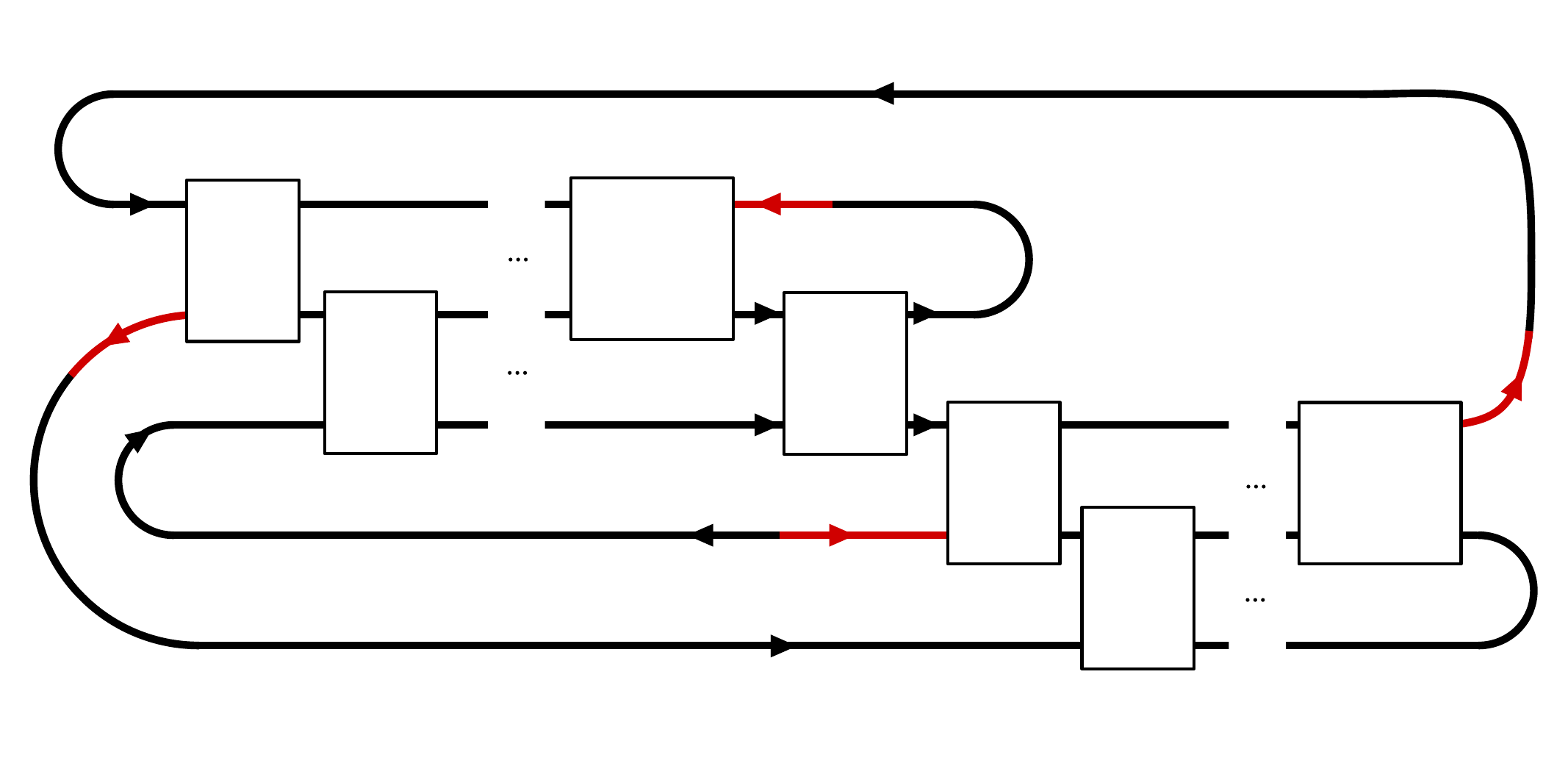}
      \put(12.75,31){\small $- a_1$}
      \put(23,24){\small $a_2$}
      \put(37.25,31){\small $-a_{2n-1}$}
      \put(51.85,24){\small $a_{2n}$}

      \put(62.75,17){\small $a_1$}
      \put(69.75,10){\small $-a_2$}
      \put(84.75,17){\small $a_{2n-1}$}
	\end{overpic}
    \vskip-0.3cm
	\caption{Orientation contradiction when $s^{-1}(3) = 2$.}
	\label{fig:KLrational2}
\end{figure}

\vspace{2mm}
\textsc{Case 2}: Central box $-a_{2n}$, strands in opposite direction, $n\geq 3$.
\vspace{2mm}

Now consider the diagram where the central box is $-a_{2n}$ and the strands passing through the box have opposite orientation. Suppose for the sake of contradiction that the diagram is a ruled max-tb unknot. First assume $n\geq 3$. Since $a_{2n}\geq 2$, \cref{prop:two-switch} implies that all crossings in the tangle formed by the $-a_{2n}$ box must be switched. Therefore, the two strands emanating from the right of the $-a_{2n}$ box have the same color (blue), and the two strands emanating from the left have the same color (red). Since the writhe of the diagram is $a_{2n}$, there are no other switches. As no other crossings in the diagram are switched, two strands of the same color cannot pass through any of the other boxes, for otherwise this would violate the ruling property. 

This latter observation forces the lower strand emanating from the left of the $-a_1$ box to the right of the $-a_{2n}$ box to be red. It turns around and enters the $-a_2$ box from the left as the lower strand, forcing the other strand passing through the box to be blue, and the third strand in the $3$-braid region to be red. Analyzing the $a_3$ box, the same reasoning implies that the strands exiting to the right of the $-a_2$ are red on the bottom and blue on top. In particular, $a_2$ must be even. Since $n\geq 3$, the same argument applies by studying the region between the $a_3$ and $-a_4$ box and we conclude that $a_3$ is even. Continuing, we conclude that $a_2, \dots, a_{2n-2}$ are even.  

\begin{figure}[ht]
	\centering
	\begin{overpic}[scale=.36]{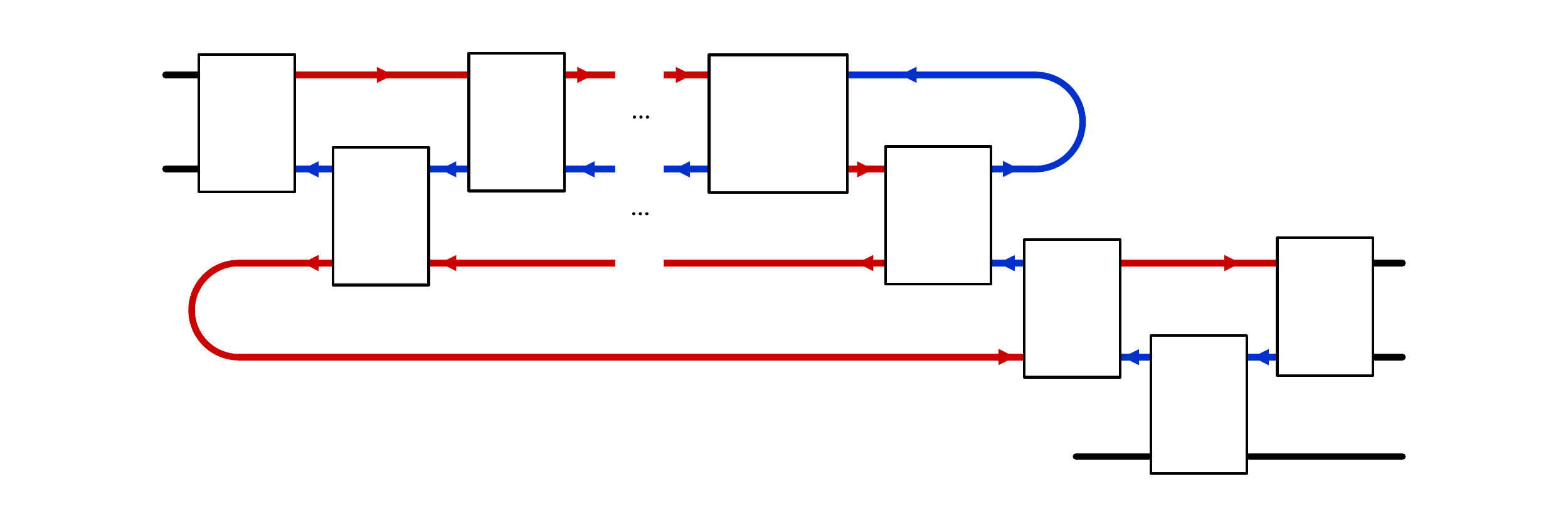}
      \put(14.25,25){\small $a_1$}
      \put(22.25,19){\small $-a_2$}
      \put(32,25){\small $a_3$}
      \put(47,25){\small $a_{2n-1}$}
      \put(56.85,19){\small $-a_{2n}$}

      \put(66.25,13){\small $-a_1$}
      \put(75.25,6.75){\small $a_2$}
      \put(82.25,13){\small $-a_{3}$}
	\end{overpic}
    
	\caption{The portion of the diagram relevant for the contradiction when $n\geq 3$ and the central box is $-a_{2n}$.}
	\label{fig:KLrational3}
\end{figure}

On the other hand, consider the two red strands exiting to the left of the $-a_{2n}$ box. It follows that $a_{2n-1}$ must be odd, lest there be two red strands passing through the $a_{2n-2}$ box.

Now we investigate the orientations of the strands. Orient the blue strands incident to the $-a_{2n}$ box as indicated in \cref{fig:KLrational3}; this determines the orientation of the rest of the blue strand in the upper left $3$-braid region (but it does not determine the orientation of the red strand, as this depends on the parity of $a_{2n}$). Notice, however, that the red strand must pass through the $-a_2$ box from right to left, for otherwise \cref{prop:two-switch} would imply that all $a_n \geq 2$ crossings in that box are switched, which is a contradiction. Following along this strand, we see that the strands pass through the $-a_1$ box with opposite orientation. Appealing once again to \cref{prop:two-switch}, we are forced to conclude $a_1 = 1$. Evenness of $a_2$ then forces the red strands to enter in opposite orientation through the $-a_3$ box. Earlier, though, we concluded that $a_3$ is even, and \cref{prop:two-switch} then forces both of these crossings to switch, a contradiction.

\vspace{2mm}
\textsc{Case 3}: Central box $-a_{2n}$, strands in opposite direction, $n = 2$.
\vspace{2mm}

For the special case $n=2$, the same argument implies $a_2$ is even and $a_1 = 1$. The difference is that $a_3 = a_{2n-1}$ is definitively odd, so we do not yet have a contradiction. However, the same reasoning we applied to the $-a_1$ box applies to the $-a_3$ box and we conclude that, in fact, $a_3 = 1$. Moreover, to prevent the $-a_2$ box from becoming a prohibited (A) tangle, we in fact have $a_2 = 2$. Letting $a_{4} =: 2k$, these simplifications then lead us to the diagram on the left side of \cref{fig:KL3}.

\begin{figure}[ht]
	\centering
	\begin{overpic}[scale=.32]{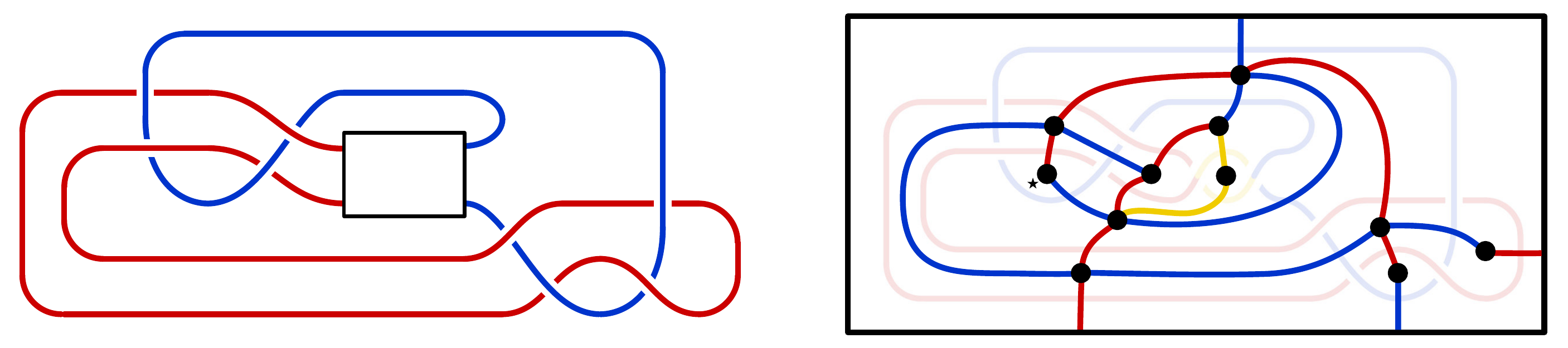}
      \put(23.5,10.25){\small $-2k$}
     
	\end{overpic}
    
	\caption{The case $n=2$, and the dual graph for $-2k=-2$.}
	\label{fig:KL3}
\end{figure}

Next, observe that any front projection $\mathcal{F}$ of the max-tb unknot with its unique normal ruling has the following property. Viewing the smooth diagram $D \subset S^2$, and given any point $p\notin D$, there is a circle $C \subset S^2$ passing through $p$, intersecting $D$ 
transversely and away from double points, such that $C$ intersects each color no more than twice. (These circles correspond to vertical lines in the front projection.) We claim that the diagram in \cref{fig:KL3} does not have this property. 

This is easiest to see from the perspective of the dual graph of the diagram, with edges colored according to the ruling. We have drawn the dual graph for $-2k=-2$ on the right, with the vertex corresponding to the region at infinity drawn as a black rectangle; it will be clear that the argument holds for any $k$. The above circle property implies the following property of the dual graph: given any interior vertex, there is a walk from the rectangle at infinity to the prescribed vertex and back to the rectangle at infinity such that each edge color is traversed no more than twice. 

Our diagram fails to have this property because of the starred vertex. Note that any walk from the starred vertex to the rectangle at infinity traverses at least two red edges. Therefore, any ``circle" from the rectangle to the starred vertex and back must traverse at least four red edges. This is a contradiction, which completes the proof. 
\end{proof}

Now we prove \cref{thm:burton}. The summary of the obstructions is in \cref{table1}.

\begin{table}[ht]
\centering
\begin{tabular}{|c|c|c|c|c|c|c|}
\hline
\textbf{\cite[Figure $\ast$]{burton2024hard}} & \textbf{Mirror} & \textbf{W.P.H?} & \textbf{S.S.H?} &  \textbf{Writhe} & \textbf{Max-tb front?} & \textbf{Obstruction} \\
\hline
8 & & $\checkmark$ & $\checkmark$ & 0 & X & writhe \\
\hline
9 & $\checkmark$ & $\checkmark$ & $\checkmark$ & 3 & X & ad hoc \\
\hline
10 & & $\checkmark$ & $\checkmark$ & 0 & X & writhe \\
\hline
11 & $\checkmark$ & $\checkmark$ & $\checkmark$ & 1 & X & writhe \\
\hline
12 &  & $\checkmark$ & $\checkmark$ & 1 & X & writhe \\
\hline
13 & & $\checkmark$ & $\checkmark$ & 2 & X & tangle (B) \\
\hline
14 & $\checkmark$ & X & X & 2 & X & tangle (A) \\
\hline
15 & & $\checkmark$ & $\checkmark$ & 1 & X & writhe \\
\hline
16 &  & ? & X & 1 & X & ad hoc \\
\hline
17 & & ? & X & 2 & X & ad hoc \\
\hline
18 & & ? & X & 5 & X & tangle (A) \\
\hline
19 & & $\checkmark$ & X & 0 & X & writhe \\
\hline
20 & & $\checkmark$ & X & 0 & X & writhe \\
\hline
21 & $\checkmark$ & ? & X & 3 & X & tangle (C) \\
\hline
22 & & $\checkmark$ & $\checkmark$ & 3 & X & tangle (C) \\
\hline
23 & $\checkmark$ & ? & X & 5 & X & tangle (A) \\
\hline
24 & & $\checkmark$ & $\checkmark$ & 5 & X & tangle (A) \\
\hline
25 & & ? & X & 37 & ? & \\
\hline
26 & $\checkmark$ & ? & X & 1 & X & tangle (A) \\
\hline
27 & $\checkmark$ & $\checkmark$ & $\checkmark$ & 4 & X & ad hoc \\
\hline
28 & & $\checkmark$ & $\checkmark$ & 0 & X & writhe \\
\hline
\end{tabular}
\vspace{5mm}
\caption{Analyzing the unknot diagrams in \cite[Appendix A]{burton2024hard} using the figure enumerations in that article. For diagrams with nonzero writhe, we have indicated whether we use the mirror of their diagram to obtain a non-negative writhe. W.P.H stands for weakly planar hard (the weakest form of hardness) and S.S.H stands for strongly spherically hard (the strongest form of hardness).}
\label{table1}
\end{table}

\begin{proof}[Proof of \cref{thm:burton}.]
Any of the diagrams with writhe $0$ or $1$ that are weakly planar hard are ruled out from being max-tb unknot fronts by the writhe obstruction in \cref{thm:obstructions}. This accounts for \cite[Figures 8, 10, 11, 12, 15, 19, 20, 28]{burton2024hard}.  By inspection, the (appropriately mirrored, as indicated in \cref{table1}) \cite[Figures 14, 18, 23, 24, 28]{burton2024hard} contain tangle (A), \cite[Figure 13]{burton2024hard} contains tangle (B), and the \cite[Figures 21, 22]{burton2024hard} contain tangle (C), obstructing each of these. 

This leaves the appropriately mirrored \cite[Figures 9, 16, 17, 27]{burton2024hard}. The rest of the proof is dedicated to ad hoc arguments using the same reasoning underlying \cref{prop:prohibited_tangles} for these figures. We can quickly dispense of \cite[Figure 16]{burton2024hard}, because it has writhe $1$, hence the hypothetical unique oriented normal ruling would have one switch, which is a positive crossing. One can quickly verify that none of the $8$ positive crossings, when switched, yield a decomposition of the diagram into two planar disks. The remaining three diagrams are considered in three separate arguments. 

\vspace{2mm}
\textsc{Case 1}: \cite[Figure 27]{burton2024hard}.
\vspace{2mm}

We begin with the simplest argument for \cite[Figure 27]{burton2024hard} (which is the same as \cite[Figure 27]{petronio2016unknots}). First, note that in a diagram of the max-tb unknot, any tangle of the form given in \cref{fig:switch-tangle} (or its mirror) must have at least one switched crossing, for if it didn't, the self-intersecting strand would violate the ruling condition. The diagram in question has $16$ pairwise disjoint tangle regions of this form, hence a hypothetical normal ruling would have at least $16$ switched crossings. This contradicts the fact that the writhe is $4$. 

\begin{figure}[ht]
	\centering
	\begin{overpic}[scale=.5]{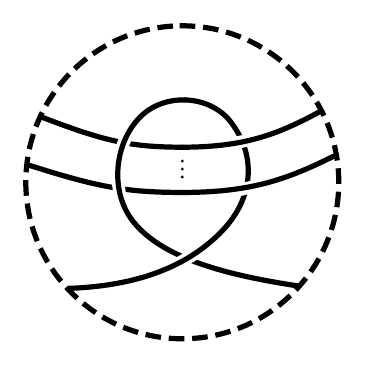}
     
	\end{overpic}
	\caption{A tangle requiring at least one switched crossing.}
	\label{fig:switch-tangle}
\end{figure}

\vspace{2mm}
\textsc{Case 2}: \cite[Figure 17]{burton2024hard}, the Ochiai I unknot.
\vspace{2mm}

The relevant unknot diagram is drawn on the left of \cref{fig:fig17}, with positive (negative) crossings highlighted in green (resp.\ pink). Assume for the sake of contradiction that there is a max-tb front witnessing the diagram, so that it admits a unique normal ruling which is moreover orientable. Then none of the negative crossings may be switched. Consequently, we must color the strand emanating north from the crossing labeled $a$ to crossing $b$ with a single color. Likewise, the strand emanating east from crossing $c$ to crossing $d$ must be uni-colored. Since it crosses the red strand at an unswitched crossing, we must use a different color (blue). Similarly, the strand emanating east from $e$ to $f$ must be uni-colored with a third color (yellow) as it intersects both red and blue at unswitched crossings. This produces the partial coloring in the middle of \cref{fig:fig17}.  

\begin{figure}[ht]
	\centering
	\begin{overpic}[scale=.34]{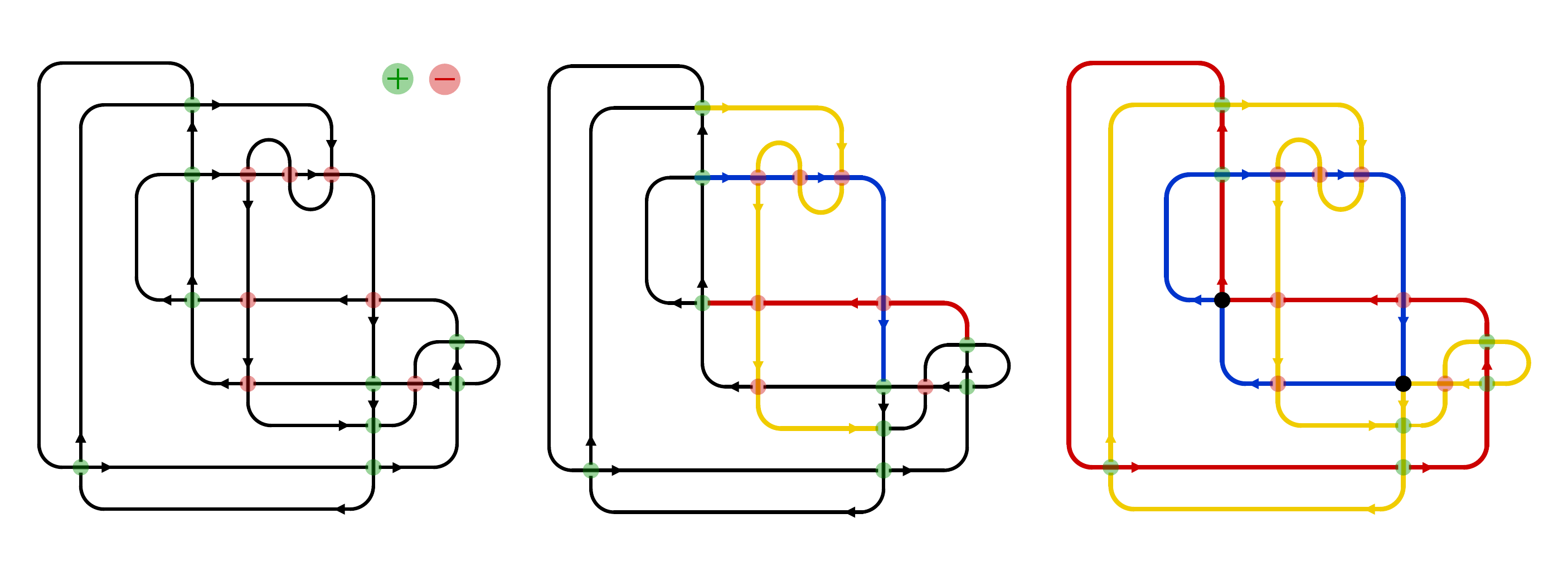}
     \put(62.5,15.75){\tiny $a$}
     \put(43.5,16){\tiny $b$}
     \put(43.5,26.25){\tiny $c$}
     \put(54.5,13.25){\tiny $d$}
     \put(43.25,30.5){\tiny $e$}
     \put(54.35,10.35){\tiny $f$}
	\end{overpic}
	\caption{The Ochiai I unknot, \cite[Figure 17]{burton2024hard}.}
	\label{fig:fig17}
\end{figure}

Now we consider the positive crossings $b$ and $c$. Note that it is not possible for both crossings to be switched, for otherwise the strand emanating north from $b$ to $c$ would be simultaneously red and blue. The same reasoning applied to the strand emanating west from $b$ to $c$ implies that at least one of these crossings must be switched. Thus, precisely one of $b$ or $c$ must be a switched crossing. 

On the right of \cref{fig:fig17} we consider the case where $b$ is switched and $c$ is unswitched. As a consequence, the strand from the overcrossing at $d$ to $b$ must be blue. But this forces a switch at crossing $d$, where both strands are blue. At this point, we have used two switches, and since the writhe of the diagram is $2$ there are no other switched crossings. We are then forced to color the rest of the diagram as indicated in the third panel of the figure, which yields a contradiction: there are intersections of the yellow color at unswitched crossings. The case where $c$ is switched and $b$ is unswitched is similar.

\vspace{2mm}
\textsc{Case 3}: The mirror of \cite[Figure 9]{burton2024hard}, the $D_{43}$ unknot.
\vspace{2mm}

The mirror of \cite[Figure 9]{burton2024hard} requires a similar but more involved ad hoc argument. The diagram is drawn on the left side of \cref{fig:fig9a}; for simplicity, we have suppressed explicit crossing information and have only indicated the orientation and whether the crossing is positive or negative, which is all that is needed for the argument. 

\begin{figure}[ht]
	\centering
	\begin{overpic}[scale=.45]{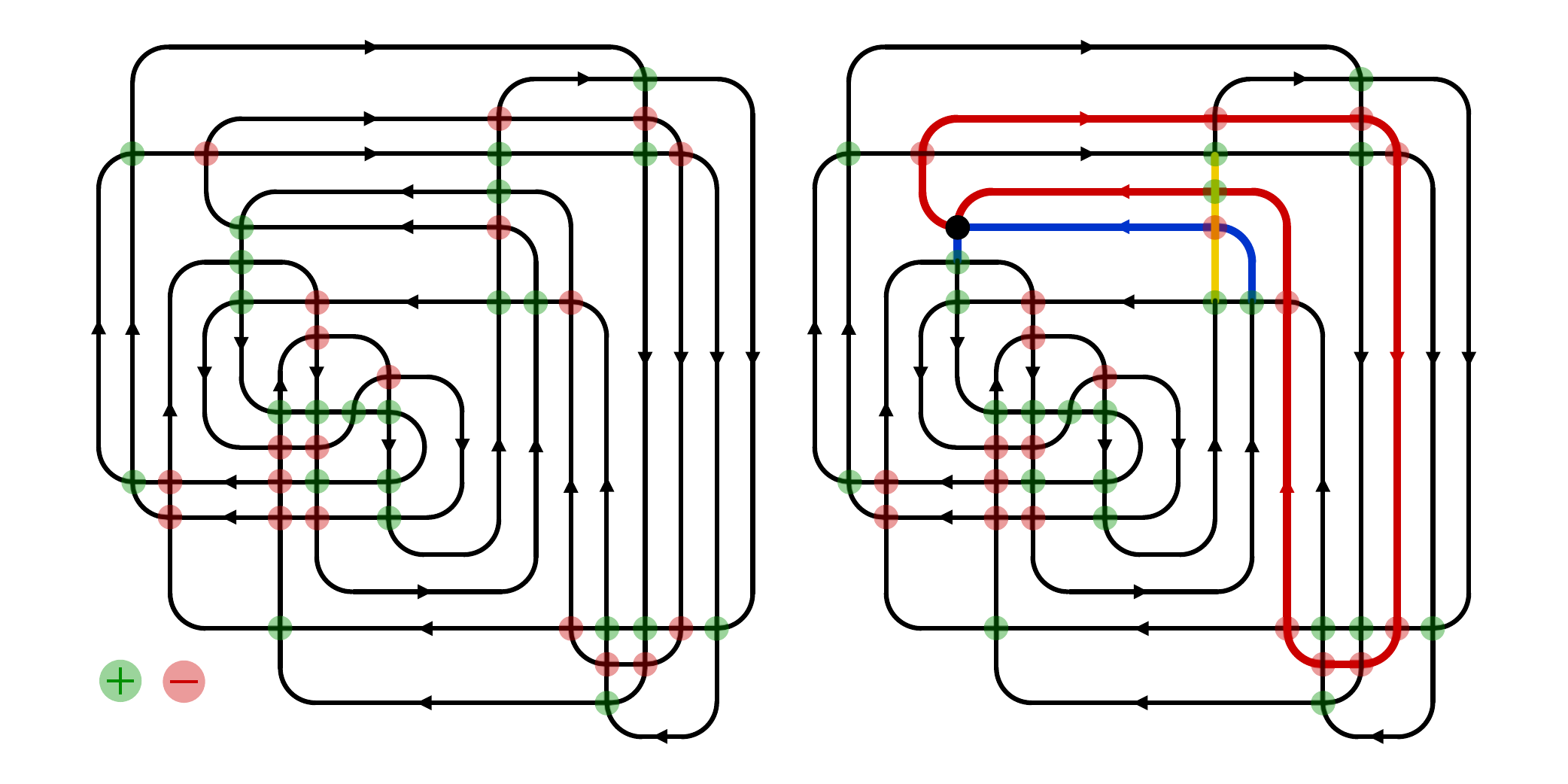}

     \put(59.5,34.2){\tiny $a$}
     \put(76,38.25){\tiny $b$}
     \put(80.5,31.5){\tiny $c$}
     \put(76,31.5){\tiny $d$}
     \put(76,41){\tiny $e$}
     
	\end{overpic}
	\caption{The mirror of the $D_{43}$ unknot, \cite[Figure 9]{burton2024hard}.}
	\label{fig:fig9a}
\end{figure}

Assume for the sake of contradiction that the diagram admits a normal ruling. Since the writhe is $3$, there are three switched crossings, all positive. Beginning at the positive crossing labeled $a$ in the right side of \cref{fig:fig9a}, the strand emanating to the west passes through a sequence of negative crossings, all necessarily unswitched, until it reaches the positive crossing labeled $b$. We color this stretch red. Note that $b$ cannot be a switch, for otherwise the orientation would force the red strand to turn north. To avoid a red-red unswitched intersection, the subsequent crossing needs to switch, sending the red strand east; the same argument holds for the next three crossings, resulting in $5$ switches. Thus, $b$ is unswitched. The strand from $b$ to $a$ is thus red, and we see that $a$ is necessarily a switched crossing. We color the other strands adjacent to $a$ blue, and note that the eastward segment extends through a negative crossing to the positive crossing labeled $c$. 

Now consider the segment beginning at the crossing labeled $d$ and heading north. This passes through both blue and red via unswitched crossings en route to the positive crossing labeled $e$, hence is necessarily a third color, yellow.

\begin{figure}[ht]
	\centering
	\begin{overpic}[scale=.45]{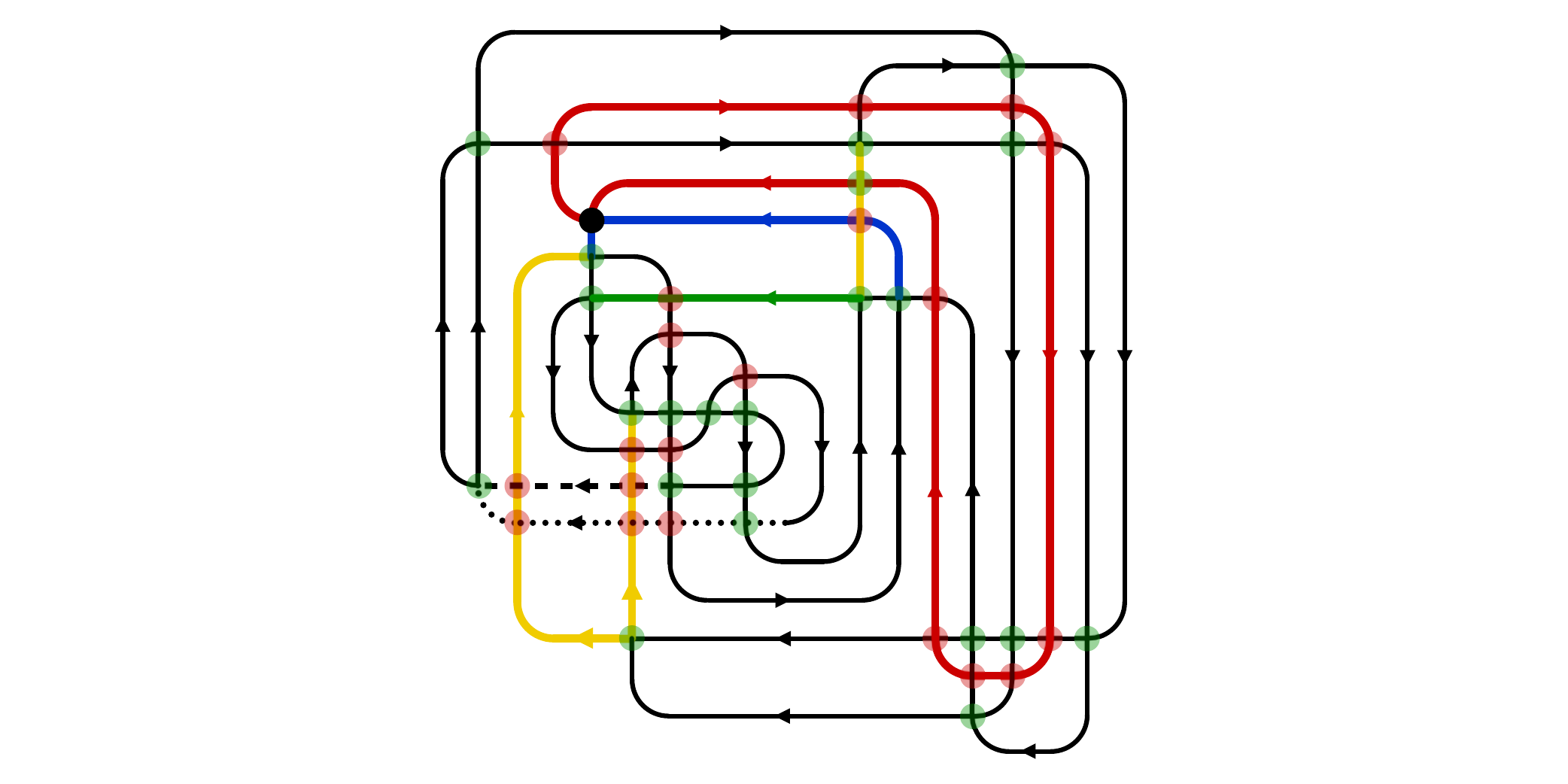}

     \put(36.5,37){\tiny $a$}
     \put(53.25,38.75){\tiny $b$}
     \put(56,31.75){\tiny $c$}
     \put(53.25,31.75){\tiny $d$}
     \put(53.25,41.25){\tiny $e$}
     \put(35.75,31.25){\tiny $f$}
     \put(38.55,10.5){\tiny $g$}
     \put(35.85,34.4){\tiny $h$}
     \put(29.25,19.75){\tiny $i$}
     \put(38.95,24.6){\tiny $j$}
     
	\end{overpic}
	\caption{The final contradiction ruling out \cite[Figure 9]{burton2024hard}.}
	\label{fig:fig9b}
\end{figure}

Next, in \cref{fig:fig9b}, consider the segment emanating to the west from crossing $d$ to the positive crossing labeled $f$. We claim that this segment must be the fourth and final color, green, of the assumed normal ruling. It cannot be red, as the entire red ruling disk is already drawn. It cannot be yellow, for the crossing at $d$ would result in a yellow-yellow intersection that cannot be orientably switched. If it were blue and $d$ were unswitched, then $c$ would be a blue-blue intersection which cannot be orientably switched. If it were blue and $d$ were switched, then the trigon involving $c,d$, and the negative crossing north of $d$ would violate the normality condition at a switch under any possible Legendrian realization of the diagram. Thus, the $d$-to-$f$ segment must be green. 

Now we turn our attention to the unicolored strand emanating west from the crossing labeled $g$, passing through two unswitched crossings, and terminating at crossing $h$. This strand cannot be blue, because $h$ would be a blue-blue intersection which cannot be orientably switched. It cannot be green, for if $h$ were unswitched there would be an unswitched green-green intersection to the right of $f$, and if $h$ were switched then $f$ would be a green-green intersection which cannot be orientably switched. Thus, the $g$-to-$h$ segment must be yellow. 

Finally, observe that the two horizontal segments (dashed and dotted in \cref{fig:fig9b}) which intersect this yellow segment must be green and blue, in some combination. Indeed, they cannot be the same color, for otherwise the positive crossing at $i$ would involve a same-color intersection which cannot be orientably switched. As the segment emanating north from $g$ and terminating at $j$ also intersects these green and and blue segments at unswitched crossings, it must be colored yellow. However, this produces our contradiction: at crossing $g$, there is a yellow-yellow intersection which cannot be orientably switched. 
\end{proof}

\section{Constructing hard Legendrian unknots}\label{sec:construct}

We identify a class of minimally-twisting Legendrian arcs which are smoothly hard relative to their endpoints. Precisely:

\begin{definition}\label{def:LBB}
A \emph{(left) building block} is a front projection $\mathcal{F}_\ell\subset (-\infty,0]_x \times \R_z$ such that
\begin{enumerate}
    \item $\mathcal{F}_\ell \cap \{x = 0\}$ consists of two points,\label{part:LBB1}
    \item the front projection $\hat{\mathcal{F}}_\ell \subset \R^2$ obtained by closing $\mathcal{F}_\ell \cap \{x = 0\}$ with a right cusp represents a max-tb unknot,\label{part:LBB2} 
    \item the only good spherical Reidemeister move available in $\hat{D}_\ell$, the smoothing of $\hat{\mathcal{F}}_\ell$, is a RII$^-$ move corresponding to a region bounded by said right cusp, and\label{part:LBB3} 
    \item the exhaustion of RIII moves in $\hat{D}_\ell$ reveals no other good Reidemeister moves and additionally leaves the region corresponding to the RII$^-$ move unaffected.\label{part:LBB4} 
\end{enumerate}
A \emph{(right) building block} is defined analogously for a front in $[0,\infty)\times \R$. We call a building block \emph{strong} if the following additional property holds: 
\begin{enumerate}
    \item[(5)] After introducing an additional positive crossing near the endpoints at $x=0$ (see \cref{fig:blocksproof2}, for example), the created smooth RIII move and its exhaustion leads to no new good Reidemeister moves.                     \end{enumerate}
\end{definition}

\begin{lemma}\label{lemma:blocks}
The front projections in \cref{fig:blocks} are strong building blocks of writhe $2$.      
\end{lemma}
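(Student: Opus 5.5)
The proof is a direct, figure-by-figure verification of the conditions in \cref{def:LBB}. We never see \cref{fig:blocks}, so each step below is a finite inspection carried out on the drawn fronts.

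\textbf{Conditions (1) and (2), and the writhe.} Condition \eqref{part:LBB1} holds by construction. For \eqref{part:LBB2} and the writhe, close each block $\mathcal{F}_\ell$ with a right cusp to get $\hat{\mathcal{F}}_\ell$. We then do two things.
\begin{enumerate}
\item Compute $\mathrm{writhe}(\hat{\mathcal{F}}_\ell)$ from the crossing signs and check that it equals $2$.
\item Count cusps and check that there are $6$, so that $\mathrm{tb} = 2 - 3 = -1$.
\end{enumerate}
To see that the closure is the unknot, exhibit an explicit Legendrian isotopy to the standard two-cusp front. The natural sequence starts with the RII$^-$ move across the closing right cusp, followed by the resulting Legendrian RI/RII simplifications. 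As a consistency check, and to organize the later steps, also record the unique normal ruling guaranteed by the earlier lemma: two positive switches and three ruling disks.

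\textbf{Conditions (3) and (4).} Pass to the smooth resolution $\hat D_\ell \subset S^2$ and inspect every complementary region, including the outer one. Monogon regions would give RI moves, and bigon regions with appropriate over/under data would give RII$^-$ moves. We check that the only such region is the bigon bounded by the closing right cusp. Next, list all trigons with the correct over/under pattern, i.e.\ the available RIII moves, and compute their full orbit under iterated RIII moves. This is a finite set of diagrams, since RIII preserves the crossing count. For each diagram in the orbit, repeat the monogon/bigon check and confirm two things:
\begin{enumerate}
\item no new good move appears;
\item the RII$^-$ bigon at the right cusp is never part of a moved trigon.
\end{enumerate}
The second point is easiest to secure by showing that no RIII-trigon shares a crossing with that bigon.

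\textbf{Condition (5), the strong property.} Insert the extra positive crossing near $x=0$, as in \cref{fig:blocksproof2}. This creates a trigon together with the two crossings of the former RII$^-$ bigon. Perform the resulting RIII move, then exhaust RIII moves again, and check once more for monogons and good bigons.

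\textbf{Main obstacle.} The delicate step is controlling the full RIII-orbit in (4) and (5), since each RIII move can create new trigons. I would organize the check by locating the RIII-active trigons and showing they are confined to a small subtangle, ideally one that can be cycled through only a handful of configurations. This reduces the exhaustion to a short explicit list of diagrams, each verified by inspection, and that list would be drawn as a supplementary figure.
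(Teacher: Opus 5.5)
Your proposal is correct and follows essentially the same route as the paper. Conditions (1)--(3) are checked by direct inspection of the right-cusp closure and its smoothing, and conditions (4)--(5) by exhausting the finite RIII-orbit; in the paper's figures this orbit reduces to a few short, non-interacting chains of RIII moves (the blue/green moves in each block, plus a terminating linear chain created by the extra positive crossing) that never touch the RII$^-$ region.

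The differences are cosmetic. The paper treats the max-tb unknot closure as immediate from the picture instead of writing out an isotopy. It also carries out the strongness check explicitly only for the second block, leaving the first to the reader.
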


\begin{figure}[ht]
	\centering
	\begin{overpic}[scale=.32]{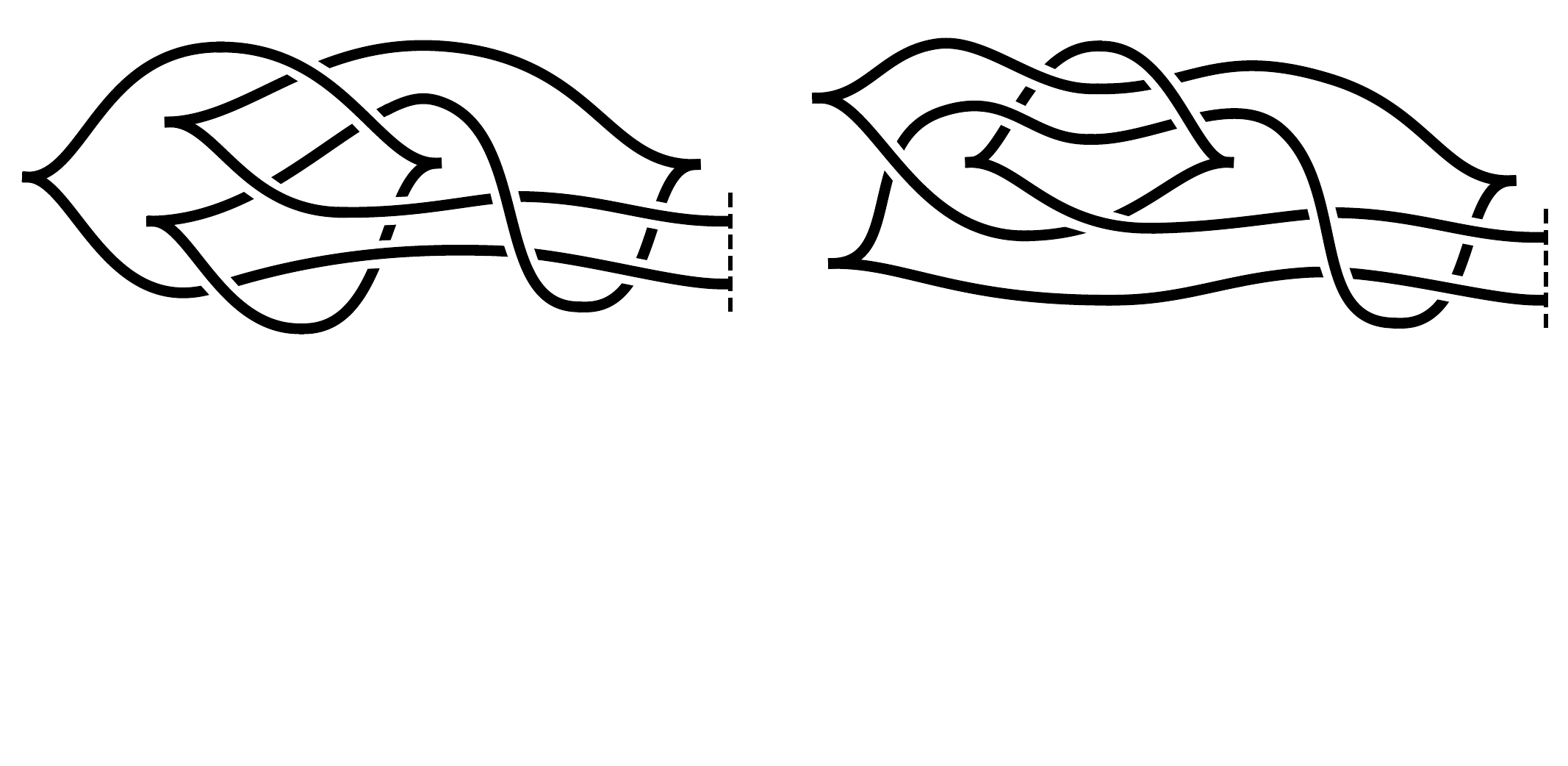}
    
	\end{overpic}
    \vskip-3cm
	\caption{Two (left, strong) building blocks of writhe $2$.}
	\label{fig:blocks}
\end{figure}

\begin{proof}
That each front admits a right-cusp closure to the max-tb unknot is immediate (see the top of \cref{fig:blocksproof}), as is the observation that each smoothing has only one available good Reidemeister (RII$^-$) move corresponding to the right-cusp region highlighted in yellow in the middle row of \cref{fig:blocksproof}. These observations give \eqref{part:LBB1}, \eqref{part:LBB2}, and \eqref{part:LBB3} of \cref{def:LBB}. It remains to verify \eqref{part:LBB4}. To that end, we have also highlighted (in blue and green, and blue, respectively) the available RIII moves for each smoothed building block. On the left, performing each move (blue or green) only introduces the corresponding inverse RIII move (red); moreover, neither move interacts with the yellow region. On the right, the blue RIII introduces its inverse (red) and a new RIII (green); performing the green move leads to a dead end, and again the yellow region is untouched. Thus, for both blocks, \eqref{part:LBB4} holds.

\begin{figure}[ht]
	\centering
	\begin{overpic}[scale=.44]{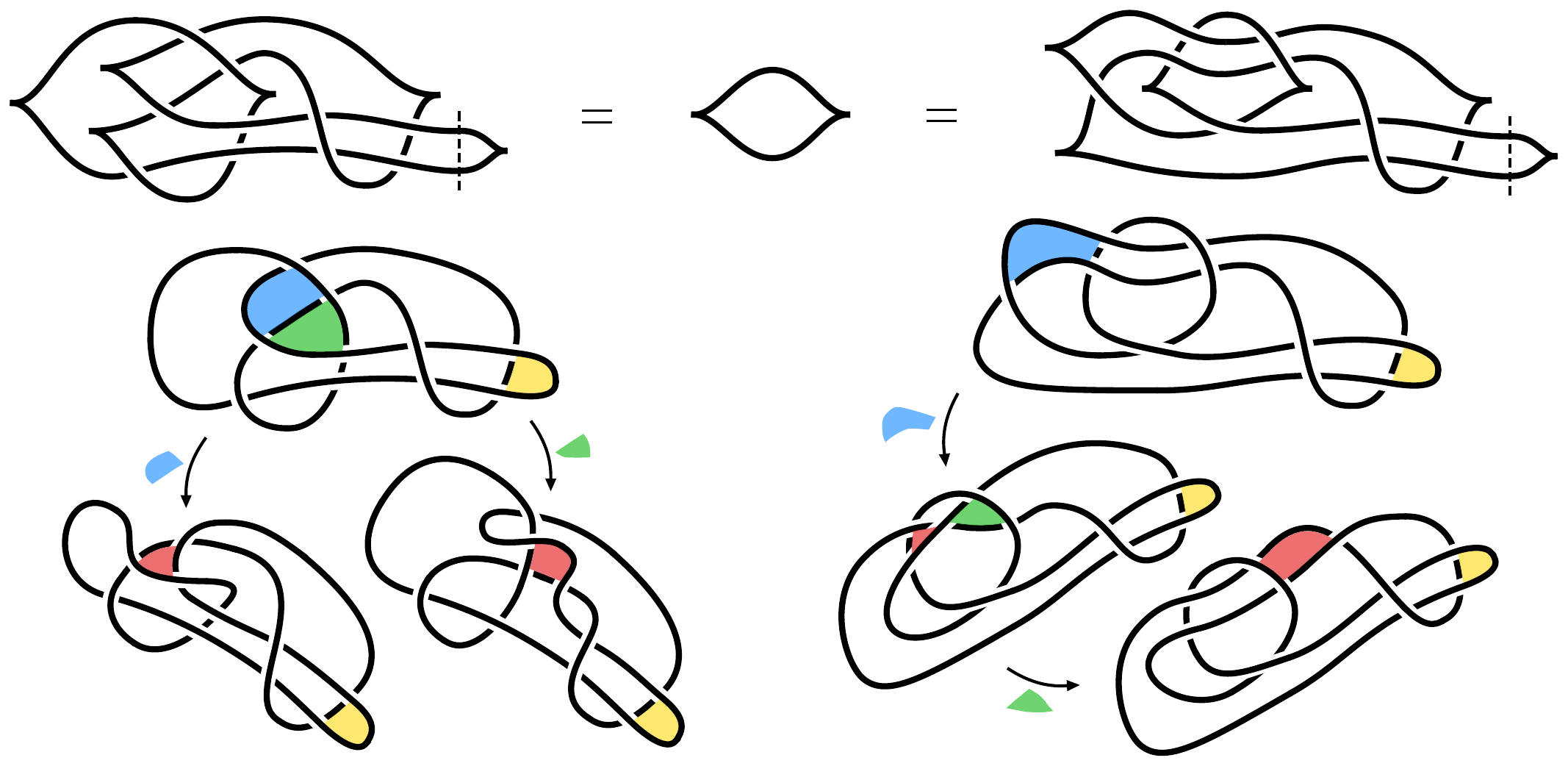}
    
	\end{overpic}
	\caption{The building block proof in \cref{lemma:blocks}.}
	\label{fig:blocksproof}
\end{figure}

The verification of strongness boils down to an exhaustion of the Reidemeister graph of the diagram with the additional positive crossing; we verify this explicitly for the second front, and leave the slightly more tedious check of the first front to the reader.\footnote{No generality of our results is lost by only considering the second front, for which we explicitly verify strongness; we have simply provided two examples for the sake of variety.}

Fixing the already present blue RIII move, the newly created RIII move leads to a linear chain of RIII moves which terminates; this is depicted in \cref{fig:blocksproof2}. One may check that at no point the linear chain of RIII moves beginning with the blue region (as shown on the right side of \cref{fig:blocksproof}) admits a nontrivial interaction with the new linear chain of RIII moves. Therefore, the indicated building block is strong.  
\end{proof}

\begin{figure}[ht]
	\centering
	\begin{overpic}[scale=.44]{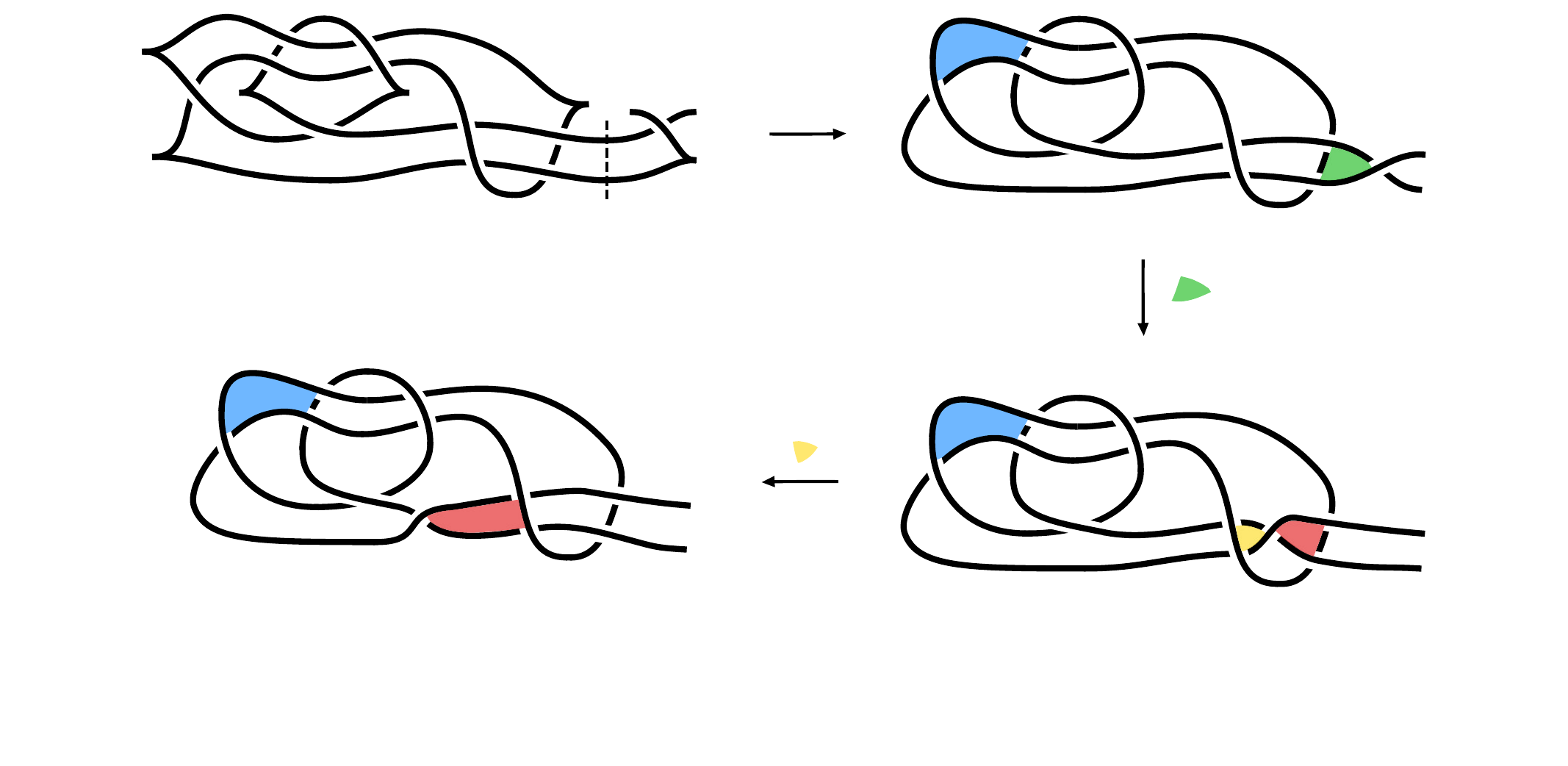}
    
	\end{overpic}
    \vskip-1.5cm
	\caption{Proving strongness in \cref{lemma:blocks}.}
	\label{fig:blocksproof2}
\end{figure}

\begin{remark}
Given the examples in \cref{fig:blocks}, it is straightforward to concoct building blocks with larger complexity, either in crossing number and/or writhe, by weaving eyes in and out of each other to create additional clasps. See \cref{fig:blocks-infinite} for two example families, one which remains minimal in writhe and another which increases in writhe. The blocks in \cref{fig:blocks} are variations on the simplest examples (in both crossing number and writhe) the authors were able to construct. In fact, by \cref{prop:writhe1obstruct} and \cref{prop:recipe} below, it follows that the writhe of a building block must be at least $2$, hence these examples are writhe-minimizing.
\end{remark}

\begin{figure}[ht]
	\centering
	\begin{overpic}[scale=.32]{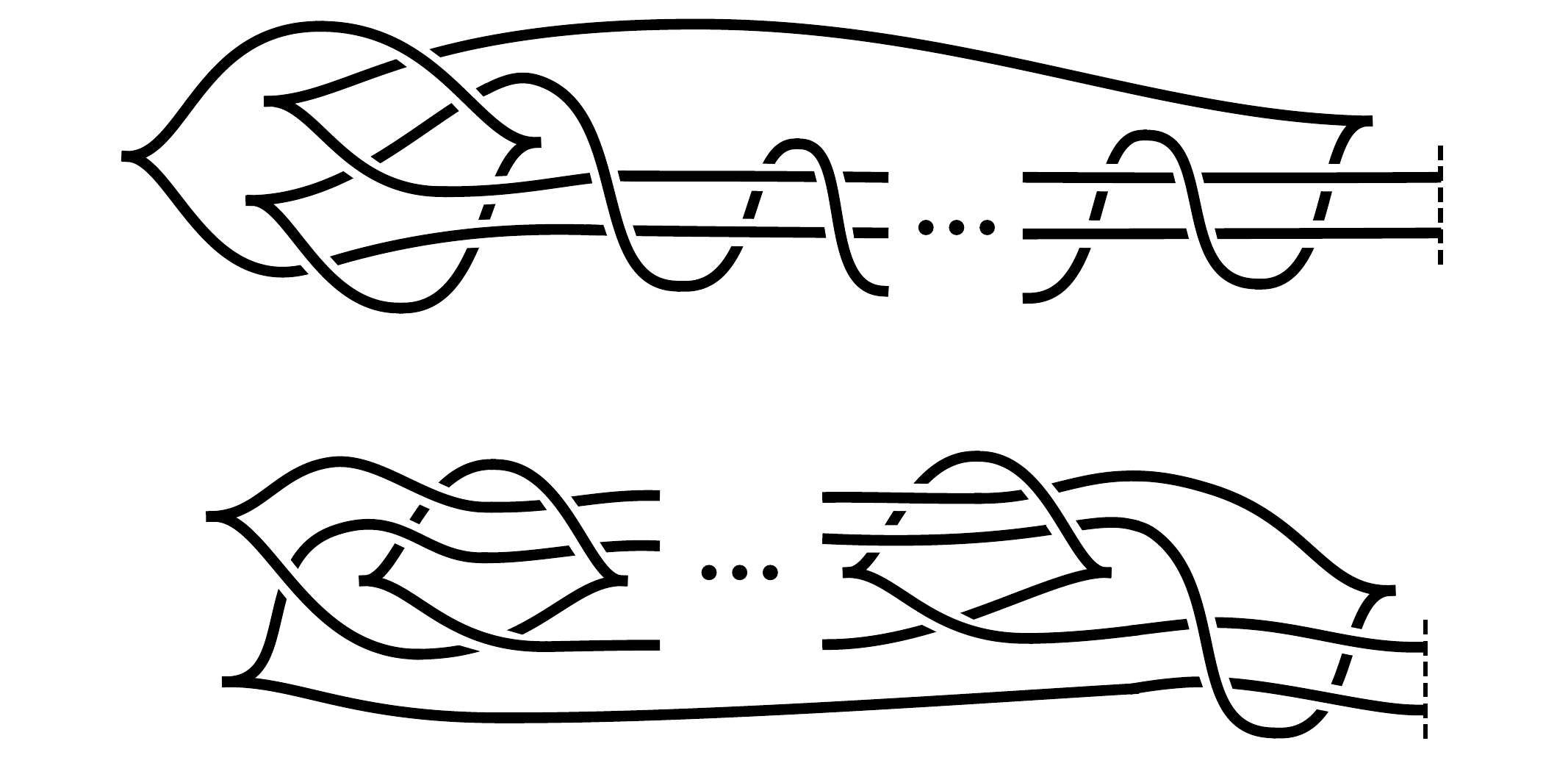}
    
	\end{overpic}
	\caption{On the top, an infinite family of building blocks of writhe $2$. On the bottom an infinite family of building blocks with increasing writhe.}
	\label{fig:blocks-infinite}
\end{figure}

\begin{proposition}\label{prop:recipe}
Given left and right building blocks $\mathcal{F}_\ell$ and $\mathcal{F}_r$, the front projection obtained by their sum as on the right side of \cref{fig:recipe} is a smoothly spherically hard max-tb unknot of writhe $\mathrm{wr}(\mathcal{F}_\ell) + \mathrm{wr}(\mathcal{F}_r)$. Given a strong building block $\mathcal{F}_\ell$, the front projection obtained as on the left side of \cref{fig:recipe} is a smoothly planar hard max-tb unknot of writhe $\mathrm{wr}(\mathcal{F}_\ell) + 1$.
\end{proposition}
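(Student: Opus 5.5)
We must show three things about each sum: it is a max-tb unknot, its writhe is as claimed, and it is smoothly hard. We will not see \cref{fig:recipe}. Our working assumption is that the spherical sum is obtained by cutting the two right/left cusp closures of $\mathcal{F}_\ell$ and $\mathcal{F}_r$ and gluing the endpoints at $x=0$ directly. We likewise assume the planar version joins $\mathcal{F}_\ell$ to a short arc with one positive crossing near $x=0$ before closing.

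\emph{Max-tb unknot and writhe.} We first check that the sum is the unknot with $\mathrm{tb}=-1$. The sum of the closures $\hat{\mathcal{F}}_\ell$ and $\hat{\mathcal{F}}_r$ is a Legendrian connected sum of two max-tb unknots, realized on fronts by deleting a right cusp and a left cusp and joining the strands. Hence it is the unknot. The Thurston--Bennequin invariant satisfies $\mathrm{tb}(\Lambda_1\#\Lambda_2)=\mathrm{tb}(\Lambda_1)+\mathrm{tb}(\Lambda_2)+1=-1$. Equivalently, count directly: crossings add, and the cusp count is $c_\ell+c_r-2$. Since $\mathrm{wr}=\mathrm{tb}+\tfrac12\#\text{cusps}$, this gives $\mathrm{wr}=\mathrm{wr}(\hat{\mathcal F}_\ell)+\mathrm{wr}(\hat{\mathcal F}_r)$. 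In the planar construction, the extra positive crossing adds $1$ to the writhe. To keep $\mathrm{tb}=-1$, the cusp count must also rise by $2$; we check this on the figure, where the extra crossing is a small Legendrian kink that is in fact a Legendrian RI, so the front is still a max-tb unknot.

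\emph{Hardness.} For the spherical statement, we consider the smoothing $D$ of the sum and enumerate every crossing-nonincreasing Reidemeister move on $S^2$, together with the RIII moves reachable from $D$. Every region of $D$ is one of three kinds: a region of $\hat D_\ell$ away from the right cusp, a region of $\hat D_r$ away from its left cusp, or one of the few regions meeting the gluing site. By \eqref{part:LBB3} and \eqref{part:LBB4}, regions of the first two kinds support no RI move, no RII$^-$ move, and no RIII chain that creates one. The only good moves in the closures were the RII$^-$ moves in the bigons adjacent to the closing cusps. Those bigons are destroyed by the gluing: each becomes a region bounded by strands of both blocks, with at least three crossings or with a non-bigon boundary. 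Moreover, RIII moves are local to triangles, and the gluing site contains no triangles. So the RIII orbits of $D$ are products of the RIII orbits of the two blocks. Because \eqref{part:LBB4} says those orbits leave the cusp regions unaffected, no move ever reaches the gluing site and no good move appears. This proves strong spherical hardness. A composite diagram is then the concern that the outer region of $S^2$ now sees both blocks; we handle it by noting that the outer region of $D$ is the union of the outer regions of the closures, minus the old cusp bigons.

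For the planar statement, the extra positive crossing creates exactly the local configuration appearing in property (5). The new RIII move and its exhaustion produce no good moves, by strongness. On the other side, the mirrored/closed piece is a trivial arc, and on the plane it bounds only the unbounded region. That region does not admit a planar RI or RII$^-$ move because it has many crossings on its boundary. The main obstacle is making the ``independence of RIII orbits'' claim rigorous at the gluing or extra-crossing site. We would verify it by listing the regions near $x=0$ and showing that no triangle containing a crossing from both sides ever forms along any chain allowed by \eqref{part:LBB4} and (5).
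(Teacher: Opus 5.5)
\textbf{Spherical half.} This part is fine and follows the paper. You get unknottedness from the connected sum. The gluing destroys the two RII$^-$ bigons. Property (4) shows the RIII orbits on the two sides never interact. Your direct count $\mathrm{tb}=(w_\ell+w_r)-\tfrac12(c_\ell+c_r-2)=-1$ is a valid, more elementary substitute for the paper's appeal to the Lagrangian filling induced by Legendrian surgery.

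\textbf{Planar half: the gap.} You argue the front is still a max-tb unknot because the extra crossing is ``a small Legendrian kink that is in fact a Legendrian RI.'' If that were true, smoothing the cusps would turn the kink into an empty monogon. That monogon is an available planar RI move, so the diagram would not be hard even weakly. Your description of the construction therefore contradicts the conclusion you want.

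\textbf{What the construction actually is.} In the paper's construction the closing arc runs around the outside of $\mathcal{F}_\ell$. The extra positive crossing sits near $x=0$, where the old right-cusp bigon becomes the triangle of property (5). The paper's proof accordingly says the front is an unknot ``because of the spherical RIII move off to infinity.'' That is a move across the face containing $\infty$, not a Legendrian isotopy in the front plane.

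\textbf{Corrected argument.}
\begin{enumerate}
\item Unknottedness: use that spherical move. The knot type depends only on the diagram in $S^2$.
\item Maximal tb: count directly, $\mathrm{tb}=(\mathrm{wr}(\mathcal{F}_\ell)+1)-(\mathrm{wr}(\mathcal{F}_\ell)+2)=-1$. No Legendrian RI is involved.
\item The unbounded region: your reason that it admits no good move (``it has many crossings on its boundary'') is not the right one. The face at infinity is exactly where the simplifying spherical move takes place. It is excluded only because a move across $\infty$ is not a planar move.
\end{enumerate}
With this corrected picture, planar hardness follows as you say. Properties (3) and (4) handle the moves inside the block. Strongness, property (5), handles the new RIII move at the extra crossing and its exhaustion.
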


\begin{figure}[ht]
	\centering
	\begin{overpic}[scale=.35]{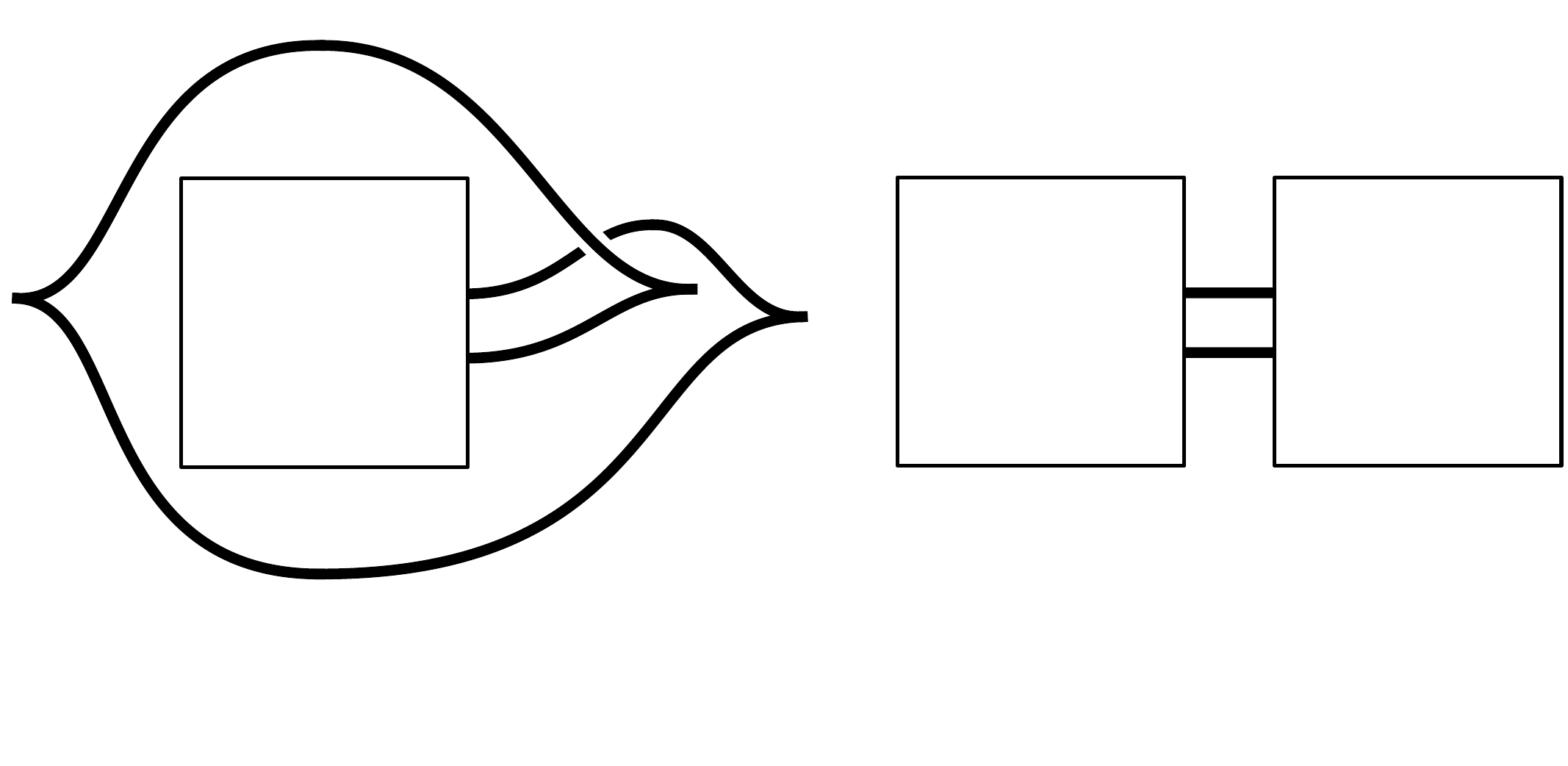}
    \put(19,27){$\mathcal{F}_\ell$}
    \put(64.5,27){$\mathcal{F}_\ell$}
    \put(88.5,27){$\mathcal{F}_r$}
	\end{overpic}
    \vskip-1cm
	\caption{On the left, a planar hard max-tb unknot of writhe $\mathrm{wr}(\mathcal{F}_\ell) + 1$. On the right, a spherically hard max-tb unknot of writhe $\mathrm{wr}(\mathcal{F}_\ell) + \mathrm{wr}(\mathcal{F}_r)$.}
	\label{fig:recipe}
\end{figure}

\begin{proof}
First, we consider the spherical recipe on the right. By definition, $\hat{\mathcal{F}}_\ell$ and $\hat{\mathcal{F}}_r$ are max-tb unknots; the composite front is obtained by performing a Legendrian surgery (smoothly, a connect sum) on split copies $\hat{\mathcal{F}}_\ell \sqcup \hat{\mathcal{F}}_r$ of the fronts. Being a split sum of unknots, the result is an unknot. The Thurston-Bennequin invariant is maximized because the Legendrian surgery on fillable knots induces a Lagrangian filling of the composite \cite{chantraine2010concordance,ekholm2012exactcobordisms}. The Legendrian surgery also destroys both available good RII$^-$ present in the smoothing $\hat{D}_\ell \sqcup \hat{D}_r$ and thus the composite diagram has no good Reidemeister moves. Moreover, by \eqref{part:LBB4} of \cref{def:LBB}, the available RIII moves on either side of the diagram sum do not interact and thus lead to no emergent good moves; it follows that the diagram is smoothly spherically hard. The writhe computation follows from the fact that writhe is additive under connect sum.

Next, we consider the planar recipe on the left. The resulting front is smoothly an unknot because of the spherical RIII move off to infinity, and the Thurston-Bennequin calculation is immediate. It only remains to verify that the diagram is smoothly hard on the plane. This is where the assumption of strongness enters: the additional positive crossing in the diagram creates a smooth RIII move. However, by assumption, the exhaustion of this RIII move leads to no good Reidemeister moves on the plane. Thus, the diagram is smoothly hard on the plane. 
\end{proof}

\begin{example}
The front in \cref{fig:writhe4hard} is given by applying \cref{prop:recipe} to the leftmost building block in \cref{fig:blocks} and (the rotation of) the rightmost building block in the same figure. The front is therefore smoothly spherically hard. 
\end{example}

\section{Impostors}\label{sec:impostors}
Given the relatively small number of cases which arise in proving \Cref{prop:writhe1obstruct}, it is tempting to apply the approach of that proof to a writhe-2 front projection $\mathcal{F}$ of the max-tb unknot.  In the present section we disabuse ourselves of this temptation by constructing infinitely many \emph{impostors} --- writhe-2 front projections of Legendrian knots which are not the max-tb unknot, but whose classical invariants and ruling polynomial agree with those of the max-tb unknot.  While the only structural feature of $\mathcal{F}$ needed to prove \Cref{prop:writhe1obstruct} was the uniqueness of its ruling, these examples demonstrate that additional structure will be necessary in analyzing the writhe-2 case.

\begin{figure}
\centering
\includegraphics[scale=0.25]{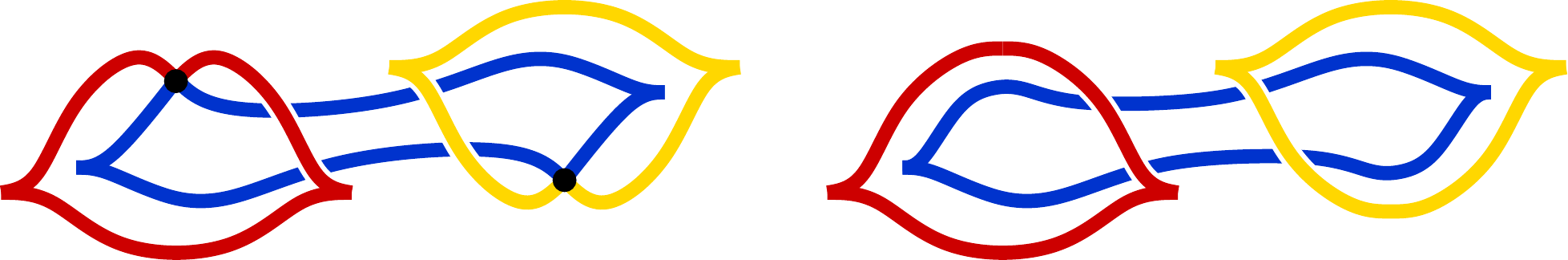}
\caption{The 0-resolution of a front projection of the standard Legendrian unknot with respect to its unique ruling.}
\label{fig:0-resolution}
\end{figure}

\subsection{Writhe 2 diagrams of the standard unknot}
We explain the origin of our impostors by first making an observation about 0-resolutions of writhe-2 front projections of the max-tb unknot.  Given any ruling $R$ of a front projection $\mathcal{\mathcal{F}}$ (of any Legendrian knot in $(\mathbb{R}^3,\xi_{\mathrm{std}})$), we may denote by $\Lambda_0(\mathcal{F},R)$ the front projection obtained by performing the 0-resolution at each switch of $R$.  See \Cref{fig:0-resolution}.  When the ruling $R$ is understood (e.g., when $\mathcal{F}$ admits a unique ruling), we will write $\Lambda_0(\mathcal{F})$.

\begin{proposition}\label{prop:writhe2pairwiseunlinked}
Let $\mathcal{F}$ be a front projection of the max-tb unknot with $\mathrm{writhe}(\mathcal{F}) = 2$. Then there is a sequence of Legendrian RIII moves which, when applied to $\mathcal{F}$, produces a front projection $\mathcal{F}'$ with the property that $\Lambda_0(\mathcal{F}')$ consists of three pairwise unlinked Legendrian eyes.
\end{proposition}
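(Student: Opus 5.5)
By the Lemma on unique rulings, $\mathcal{F}$ has a unique normal ruling, and since $\mathrm{writhe}(\mathcal{F})=2$ it has exactly two switches, both positive crossings. The ruling graph $G$ has $\chi(G)=1$, so with two edges it has three vertices and is a tree (a path). Hence $\Lambda_0(\mathcal{F})$ consists of exactly three Legendrian eyes $E_1,E_2,E_3$, with $E_2$ sharing one switch with $E_1$ and the other with $E_3$. Under the $0$-resolution, the eyes $E_1$ and $E_2$ are joined by a band at a switch, and so are $E_2$ and $E_3$. The original knot $\mathcal{F}$ is therefore a band sum of the three-component Legendrian link $\Lambda_0(\mathcal{F})$ along two bands.

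The plan is to show first that each pair of eyes has linking number zero and is in fact unlinked. Then we use Legendrian RIII moves to remove the crossings that ``fake'' a link.

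For linking numbers, I would argue via the Thurston--Bennequin count. Each eye is a max-tb unknot with $\mathrm{tb}=-1$, and the $0$-resolution removes only two positive crossings. Orient the eyes compatibly with $\mathcal{F}$; this is possible because the ruling is orientable. Then
\[
\mathrm{writhe}(\mathcal{F}) = 2 + \sum_i \mathrm{writhe}(E_i) + 2\sum_{i<j}\mathrm{lk}(E_i,E_j).
\]
Summing the cusp counts shows that $\sum_i \mathrm{writhe}(E_i) + 2\sum\mathrm{lk}=0$. Each eye is a $2$-cusp component with no self-crossings, so $\mathrm{writhe}(E_i)=0$ and $\sum_{i<j}\mathrm{lk}(E_i,E_j)=0$.

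To upgrade this to individual vanishing and honest unlinking, I would use uniqueness of the ruling, following the clasp analysis of \cref{prop:writhe1obstruct} and \cref{fig:Obs2}. Two eyes that cross each other at unswitched crossings cannot form a clasp configuration, since that would produce an extra normal ruling. So any two eyes must meet in ``ribbon'' patterns such as the $R^*$-regions of \cref{prop:writhe1obstruct}, or in trivially nested or stacked patterns.

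The main step is then a local normal-form argument, done region by region between the two switches' $x$-coordinates. Consider two eyes $E_i,E_j$ and a third eye strand that crosses them. I would show that whenever the third eye's strand passes between them in a way that creates a triple-point configuration, a Legendrian RIII move slides it out. This move does not change the switched crossings or the normality near them, so the ruling transfers to $\mathcal{F}'$. After exhausting these RIII moves, every pair $E_i,E_j$ should intersect only in $R^*$-type ribbon regions or in configurations that are visibly split. Two max-tb eyes meeting only in such pure-braid ribbon regions are Legendrian unlinked, because each ribbon region can be undone by a Legendrian RII of the link, although not of $\mathcal{F}$ itself.

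The expected obstacle is the interaction among all three eyes simultaneously. A strand of $E_3$ may be threaded through a ribbon region of $E_1$ and $E_2$ so that the pair $E_1,E_2$ alone looks unlinked while the triple is Borromean-like. This is exactly why RIII moves are needed. To handle it I would try an induction on the number of crossings of $E_3$ inside the $x$-interval where $E_1$ and $E_2$ overlap. Using the forbidden-clasp lemma, I would aim to show that there is always an innermost triangle that admits a Legendrian RIII move reducing this count, so the process terminates with pairwise unlinked eyes.
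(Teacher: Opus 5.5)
\textbf{The gap: switched clasps.} The step that fails is your claim that two eyes meeting at unswitched crossings cannot form a clasp configuration as in \cref{fig:Obs2}, ``since that would produce an extra normal ruling.'' That reasoning is only valid in writhe $1$.

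With three eyes, one of the two clasping eyes, say $B$, may share a switch with the third eye $Y$, and that switch may lie on the boundary of the clasp region. Switching the two clasp crossings then need not produce a normal ruling, because the existing $B$--$Y$ switch obstructs it. So uniqueness of the ruling gives no contradiction. These \emph{switched} clasps are the genuine obstruction, and they are why the statement allows RIII moves at all. Indeed, if your no-clasp claim held, the writhe-$1$ casework would already show that $\Lambda_0(\mathcal{F})$ itself is pairwise unlinked, with no moves needed.

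The missing idea is to destroy a switched clasp between $B$ and $R$ by a Legendrian RIII move that slides $R$ across the $B$--$Y$ switch. This converts the $R$--$B$ crossings of the clasp into $R$--$Y$ crossings. You must then check that no prohibited $R$--$Y$ clasp is created. This holds because otherwise the two original $R$--$B$ clasp crossings could have been switched to give an extraneous ruling of $\mathcal{F}$. Once all clasps are gone, pick a slab where the two disks are nested or disjoint (e.g.\ near the right cusps). Then run the Case 1/Case 2 argument of \cref{prop:writhe1obstruct} on that pair alone, and repeat for the other two pairs.

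\textbf{Why your RIII moves cannot help.} As you describe them, the triangle is formed by strands of three distinct eyes at unswitched crossings. Such a move keeps the same switch set as a valid ruling, which by uniqueness is \emph{the} ruling of the new front. So it is an RIII move of the link $\Lambda_0(\mathcal{F})$ itself, hence a Legendrian isotopy of it. It therefore cannot change whether any pair of eyes is linked. Only moves that pass a strand through a switch alter $\Lambda_0$ nontrivially.

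The obstacle you anticipate is also not one the proposition must rule out. A third eye threaded so that the triple is Brunnian while each pair is unlinked is compatible with the statement, which asserts only pairwise unlinkedness. The paper in fact leaves open whether eyes can form a Brunnian link. So your induction on crossings of $E_3$ in the overlap of $E_1$ and $E_2$ targets the wrong configuration. Its key step, an innermost triangle admitting an RIII, rests on exactly the forbidden-clasp lemma that fails here.

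Two smaller points. Your linking-number computation is correct, but it only yields $\sum_{i<j}\mathrm{lk}(E_i,E_j)=0$ and is never used. Also, an $R^*$-region is not undone by a single Legendrian RII. In \cref{prop:writhe1obstruct} the RII appears only at the end of a chain of such regions, which is why the full casework from an initial nested or disjoint slab is needed.
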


\begin{proof}
Because $\mathcal{F}$ is a front projection of the max-tb unknot, it admits a unique ruling; because $\mathrm{writhe}(\mathcal{F}) = 2$, the resolution $\Lambda_0(\mathcal{F})$ of this ruling consists of three Legendrian eyes, which we denote $B$, $R$, and $Y$.  We will show that the components $B$ and $R$ must be unlinked in the absence of $Y$, possibly after first applying some Legendrian RIII moves to $\mathcal{F}$.  We will make no assumption about the existence of a switch between these components, and thus the argument will apply to any pair of the three components.

Even without a switch between $B$ and $R$, we may find a portion $\{x_0-\epsilon\leq x\leq x_0+\epsilon\}$ of the front projection in which the regions bounded by $B$ and $R$ are either nested or disjoint.  For instance, such a region may be found near the right cusps of $\mathcal{F}$.  We can now proceed to apply the casework of \Cref{prop:writhe1obstruct}, except that ruling out the clasp subregions depicted in the top row of \cref{fig:Obs2} may first require the application of Legendrian RIII moves.

\begin{figure}
\centering
\includegraphics[scale=0.25]{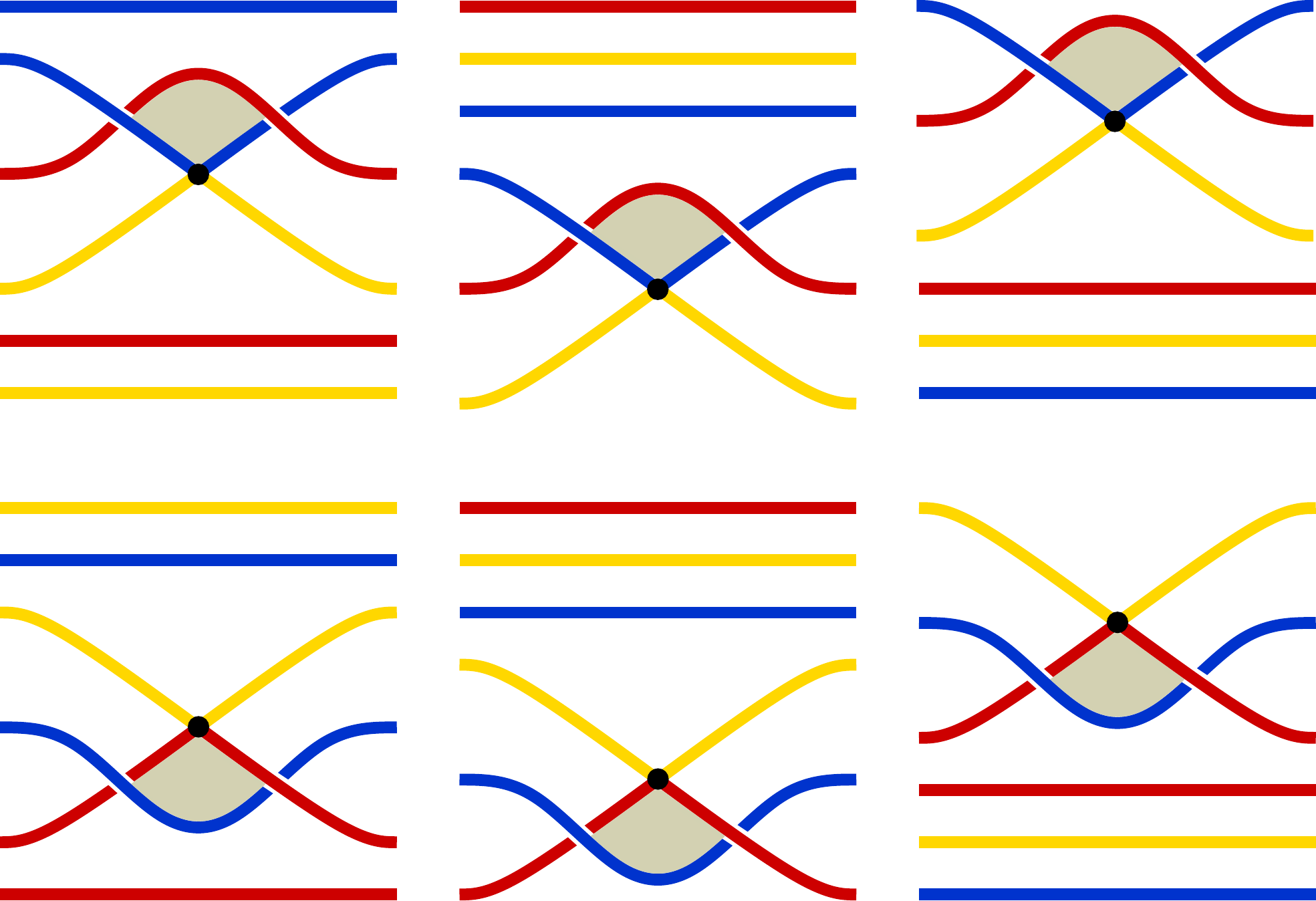}
\caption{The three configurations of $B$ and $R$ prohibited by \cref{fig:Obs1} fail to lead to an extraneous ruling if $Y$ meets the clasp region at a switch.}
\label{fig:switchedClaspRegions}
\end{figure}

Specifically, a clasp subregion $r_c$ may fail to lead to an extraneous ruling if the third component $Y$ shares a switch with $B$ or $R$ along the boundary of $r_c$.  In this case, we say that $r_c$ is a \emph{switched} clasp subregion.  See \Cref{fig:switchedClaspRegions}.  For instance, suppose $Y$ and $B$ share a switch along the boundary of $r_c$, as depicted in \Cref{fig:switchedClaspRegionReidemeister}.  In this case, we may destroy the subregion $r_c$ by applying a Legendrian RIII move which slides $R$ across the switch.  See the top row of \Cref{fig:switchedClaspRegionReidemeister}.  Notice that this modification creates additional crossings between $R$ and $Y$, but does not create a prohibited clasp subregion between these components.  If a prohibited clasp subregion between $R$ and $Y$ were created by this move, then the two crossings of $R$ and $B$ which occur along the boundary of $r_c$ could have been made into switches, providing an extraneous ruling of $\mathcal{F}$.  See the bottom row of \Cref{fig:switchedClaspRegionReidemeister}.

\begin{figure}
\centering
\includegraphics[scale=0.25]{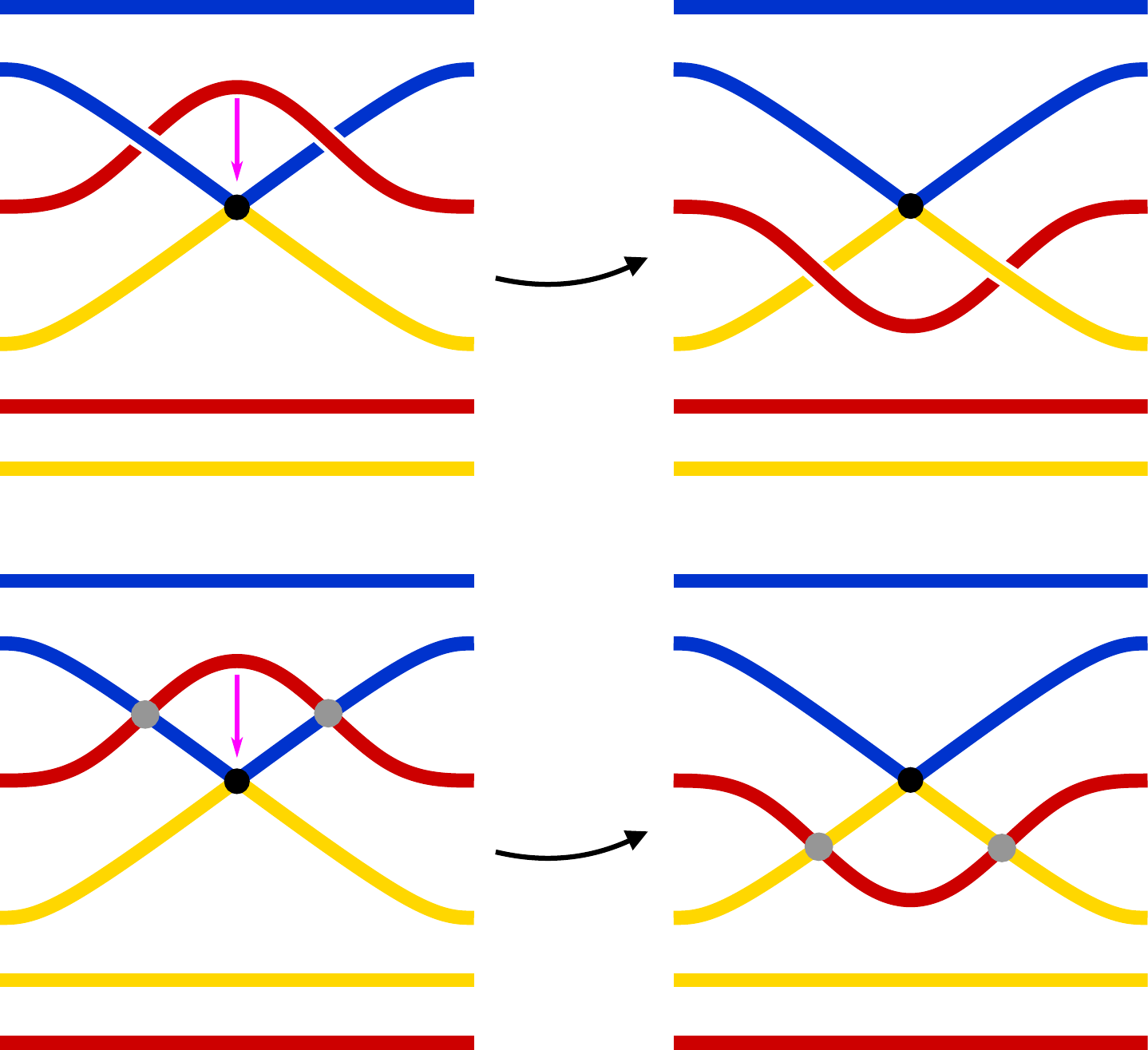}
\caption{By applying a Legendrian RIII we may eliminate the clasp regions seen in \Cref{fig:switchedClaspRegions}.  This will create additional crossings between $R$ and $Y$, but does not create prohibited clasps, lest $\mathcal{F}$ admit an extraneous ruling.}
\label{fig:switchedClaspRegionReidemeister}
\end{figure}

Following the elimination of all clasp subregions, $B$ and $R$ admit an initial region analogous to the switch region $\mathcal{F}_{\mathrm{sw}}$ used in \Cref{prop:writhe1obstruct}.  We may now apply the argument of \Cref{prop:writhe1obstruct} to the link $B\cup R$ in the absence of $Y$ to see that $B$ and $R$ are unlinked.  Because the Legendrian Reidemeister moves used to eliminate switched clasp regions between $B$ and $R$ do not create additional clasp regions, the same argument applies to the pairs $B,Y$ and $R,Y$.
\end{proof}

\Cref{prop:writhe2pairwiseunlinked} tells us that the Legendrian eyes $B$, $R$, and $Y$ of $\Lambda_0(\mathcal{F})$ form either an unlink or a Brunnian link.  Mohnke has shown in \cite{mohnke2001legendrian} that it is not possible to form the Borromean rings out of three max-tb Legendrian unknots, regardless of their front projections.  On the other hand, there exist non-trivial, $n$-component Brunnian links whose components are all max-tb Legendrian unknots, for any $n\geq 2$. See \cref{fig:brunnian} for an example when $n=3$.

\begin{figure}[ht]
	\begin{overpic}[scale=.4]{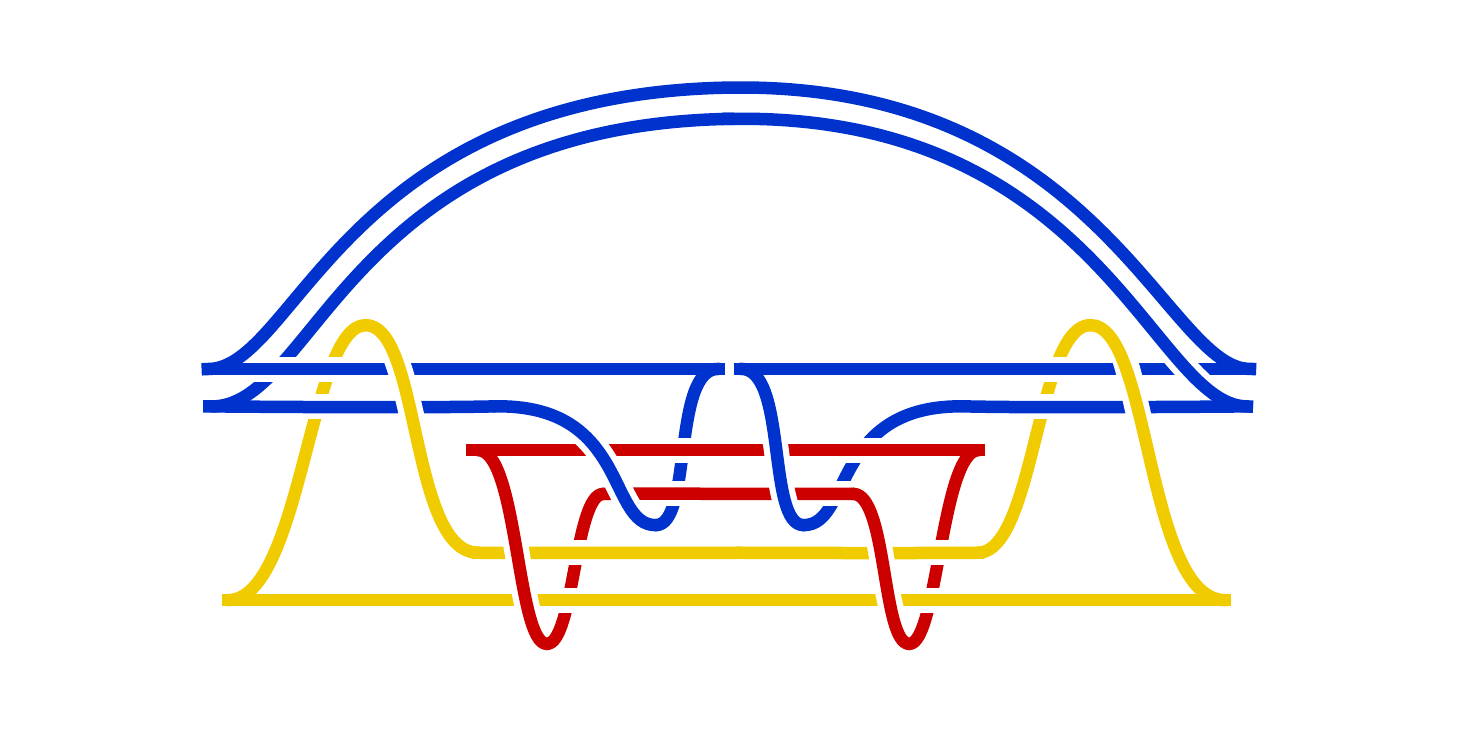}   
    
	\end{overpic}
	\caption{A $3$-component Brunnian link.}
	\label{fig:brunnian}
\end{figure}

Given our requirement that $B$, $R$, and $Y$ be eyes, we have the following question:
\begin{question}\label{question:brunnian-eyes}
Does there exist a Legendrian realization of a Brunnian link whose front projection consists of Legendrian eyes?
\end{question}
The authors suspect that the answer to this question is no, and thus that $B\sqcup R\sqcup Y$ is an unlink.  The 3-component case could plausibly be answered with some tedious casework in the front projection, but the impostors we produce in the next subsection undermine the usefulness of knowing $B\sqcup R\sqcup Y$ to be an unlink, so we do not pursue this casework here.

\subsection{Constructing impostors}

\Cref{prop:writhe2pairwiseunlinked} provides a recipe for constructing all front projections of the max-tb Legendrian unknot with writhe 2: join three pairwise-unlinked Legendrian eyes in $\mathbb{R}^2_{xz}$ into a single component by adding two switched crossings, and then perform arbitrary Legendrian RIII moves.  Note that the RIII moves have no effect on hardness.  One can resolve the question of writhe-2 hardness by either showing that this recipe always results in a diagram which admits a simplifying Reidemeister move or by using the recipe to construct a hard diagram for the standard unknot.  We have done neither.

\begin{figure}
\centering
\includegraphics[scale=0.25]{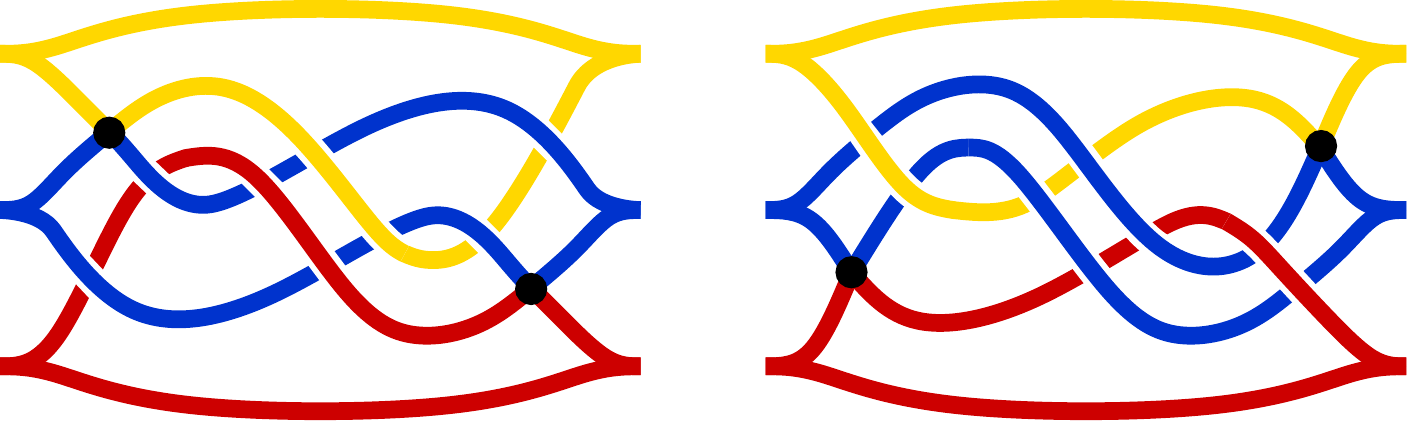}
\caption{A front projection of $m(9_{46})$ which admits no simplifying Legendrian Reidemeister moves.}
\label{fig:m946}
\end{figure}

The essential problem with this strategy is that, while every writhe-2 diagram of the standard Legendrian unknot results from the algorithm described, the algorithm can also produce nontrivial knot types.  Consider \Cref{fig:m946}, which exhibits a front projection $\mathcal{F}$ of a Legendrian knot of smooth type $m(9_{46})$.  As depicted, this front projection admits rulings $R_1$ and $R_2$ so that the 0-resolutions $\Lambda_0(\mathcal{F},R_1)$ and $\Lambda_0(\mathcal{F},R_2)$ are both unlinks of three Legendrian eyes.

While the front projection seen in \Cref{fig:m946} disrupts our initial strategy, recall that the standard Legendrian unknot has ruling invariant 1.  So one may revise the strategy and hope to show that any \emph{uniquely-ruled} front projection resulting from our recipe must admit a simplifying Legendrian RII move.  This hope is quickly dashed by the front projection seen on the left of \Cref{fig:pretzel-family}.  This is a front projection for a Legendrian knot $\Lambda$ of smooth type $11n_{139}$, with classical invariants $\mathrm{tb}(\Lambda)=-1$ and $\mathrm{r}(\Lambda)=0$.  Rutherford has shown in \cite{rutherford2006thurston} that the (ungraded) ruling polynomial of a Legendrian knot is determined by its smooth isotopy class and classical invariants, and Cornwell-Ng-Sivek have exhibited in \cite{cornwell2016concordance} a Legendrian $11n_{139}$ with these classical invariants whose ruling polynomial is 1.  It follows that the Legendrian $11n_{139}$ depicted in \Cref{fig:pretzel-family} has a unique ruling, and is thus an impostor.

\begin{figure}
\centering
\includegraphics[scale=0.25]{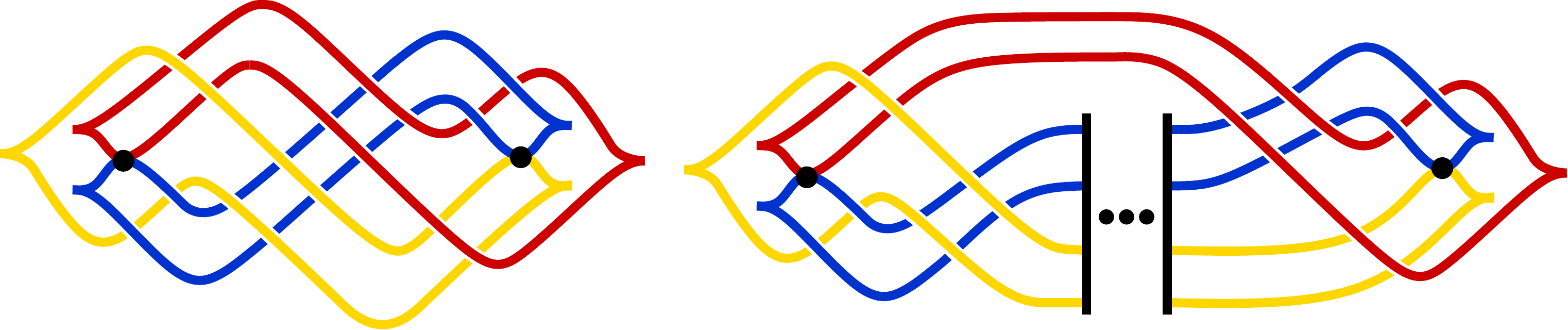}
\caption{On the left, a front projection of $11n_{139}$ which admits no simplifying Legendrian Reidemeister moves.  Twisting the blue and yellow bands around each other $m$ times produces a front projection of $P(-2m-5,-3,3)$.}
\label{fig:pretzel-family}
\end{figure}

In fact, by twisting the blue and yellow bands in our front projection of $11n_{139}$, we obtain infinitely many front projections which have writhe 2, admit a unique ruling, and whose 0-resolution under this ruling is a 3-component unlink of Legendrian eyes.  Indeed, the writhe and 0-resolution of these examples are evident in \Cref{fig:pretzel-family}, and we will compute the ruling polynomials of these fronts by determining their smooth type.

\begin{proposition}\label{prop:pretzels}
The Legendrian knot whose front is given in the right panel of \Cref{fig:pretzel-family}, which includes $m$ full twists of the blue and yellow bands of $11n_{139}$, is smoothly isotopic to the pretzel knot $P(-2m-5,-3,3)$.
\end{proposition}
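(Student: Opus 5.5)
The plan is to smooth the front and then isotope it by planar moves and smooth Reidemeister moves until it is visibly a standard three-tangle pretzel diagram, tracking the twist counts as we go. First we replace every cusp by a smooth vertical tangency, obtaining an ordinary knot diagram $D_m$. We read off $D_m$ as a union of three ``bands'' attached to a central disk. These are the eyes $B$, $R$, $Y$ of $\Lambda_0(\mathcal{F})$ glued along the two switched crossings, where each switch is a half-twisted band joining two eyes. Concretely, we view the knot as the boundary of an unknotted surface built from the three eye-disks. Equivalently, and more usefully, we locate two disjoint arcs (near the two switches) splitting the diagram into three vertical twist regions connecting a top and a bottom horizontal arc. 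That is exactly the shape of a pretzel diagram $P(p,q,r)$.

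Second, we treat the base case $m=0$. Here the smoothed diagram should reduce, after flypes and RII moves that cancel the crossings coming from the clasps of the eyes, to $P(-5,-3,3)$. As a consistency check, $P(-5,-3,3)$ is a known presentation of $11n_{139}$, which agrees with the identification already made in the text. We also compare signs with the writhe $2$ and with the orientation. For an odd-odd-odd pretzel $P(p,q,r)$ all strands in each tangle are antiparallel, and the writhe is $-(p+q+r)$ up to convention. With $p+q+r=-5$ this does not match writhe $2$ directly, since extra crossings in the front contribute, and the count will be used to pin the sign convention.

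Third, for general $m$, we observe that the $m$ full twists of the blue and yellow bands around each other are localized in a region that, under the isotopy of the base case, is carried into the twist region that becomes the $-5$ column. Each full twist of two antiparallel strands adds two crossings of the same sign to that column, giving $-5-2m$. Since the base-case isotopy is supported away from the twisting region, or commutes with it up to planar isotopy, it applies verbatim for all $m$ and yields $P(-2m-5,-3,3)$.

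The main obstacle is the second step: producing an explicit, checkable sequence of moves from the smoothed front to the pretzel picture, and verifying that the added twists land in the correct column with the correct sign rather than splitting between two columns. To do this, we would first try to draw the Seifert-like spanning surface made of the three eye-disks joined at the two switches, and then isotope that surface into the standard three-band pretzel surface. The twist parameters are then the half-twist counts of the bands, which can be read off locally from crossings and cusps.
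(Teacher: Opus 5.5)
You have correctly located the difficulty, but the proposal does not resolve it, and the device you suggest for resolving it cannot work. The surface formed by the three eye-disks of $\Lambda_0(\mathcal{F})$ together with the two half-twisted bands at the switches is connected, has Euler characteristic $3-2=1$, and has connected boundary, so it is a disk. If it were embedded the knot would be trivial. Since $P(-2m-5,-3,3)$ is not, this surface is at best an immersed ribbon disk (the bands must pass through the eye-disks). No isotopy carries it to the standard pretzel Seifert surface, which is two disks joined by three bands and has genus one. Similarly, ``locating two arcs that split the diagram into three vertical twist regions'' presupposes the pretzel form you are trying to reach. So even the case $m=0$ is only asserted (``should reduce''). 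What is needed is an explicit sequence of moves, which the paper supplies by reflecting arcs and pushing crossings along bands. Your writhe comparison checks nothing, since such an isotopy may use RI moves; the paper's does, when it adds a kink to a vertical arc.

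For general $m$, your claim that the twisting region is carried intact into the $-5$ column, each full twist contributing two crossings between two antiparallel strands, is unjustified. It also misdescribes the twisting. The blue and yellow bands each have two edges, so $m$ full twists of the bands around each other give $8m$ crossings among four strands, not $2m$ crossings between two. The paper has to dismantle this region iteratively. Four band crossings at a time are traded, via arc reflections and an added kink, for two crossings in a corner of the tangle and one on a vertical arc. This turns the $8m$ crossings into $4m$ corner crossings and $2m-1$ vertical-arc crossings, which are then pushed along a band and, after one more reflection, assemble into the $-2m-5$ column. Nothing in your argument ensures that a simplifying isotopy for $m=0$ can be kept away from the twisted bands, and the paper's is not.

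To make your idea rigorous, proceed as follows:
\begin{itemize}
\item Realize the $m$-twisted knot by twisting $K_0$ $m$ times along a disk bounded by an unknot $c$ encircling both bands.
\item Push $K_0\cup c$ through an explicit isotopy to $P(-5,-3,3)$.
\item Show that $c$ ends up bounding a disk that meets the knot exactly in the two strands of the $-5$ column, with the handedness that lengthens that column.
\end{itemize}
That sign check is essential. The opposite handedness would give $P(-5+2m,-3,3)$; for example, at $m=1$ this is $P(-3,-3,3)$, a $9_{46}$ up to mirroring.
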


\begin{proof}
\Cref{fig:pretzel-isotopy-1,fig:pretzel-isotopy-2,fig:pretzel-isotopy-3} exhibit a smooth isotopy from our diagram to the pretzel knot $P(-2m-5,-3,3)$.  Specifically, the diagram obtained by smoothing the front in \Cref{fig:pretzel-family} is seen in the leftmost panel of \Cref{fig:pretzel-isotopy-1}.  By reflecting the blue and red arcs of that diagram, we obtain a diagram with 6 fewer crossings, as seen in the middle panel of \Cref{fig:pretzel-isotopy-1}.  Next, we push the three crossings highlighted in the middle panel along the indicated arcs, obtaining the rightmost panel of \Cref{fig:pretzel-isotopy-1}.  The next stage of our isotopy focuses on the tangle highlighted in that panel.

\begin{figure}
\centering
\includegraphics[scale=0.24]{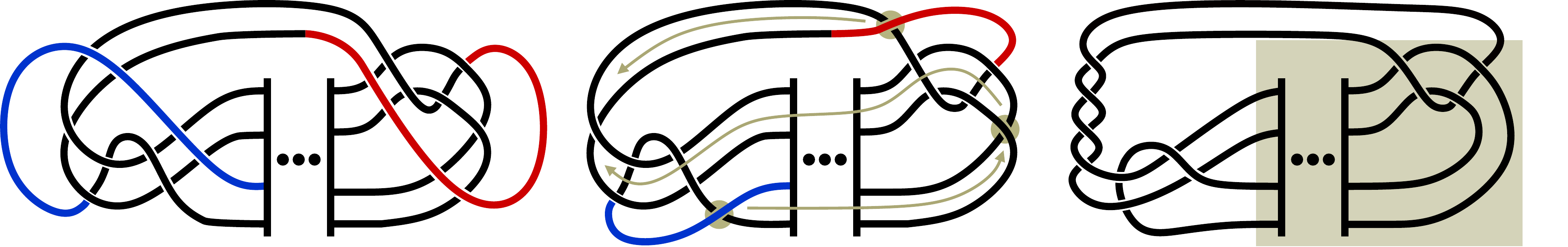}
\caption{The first two steps of our smooth isotopy.}
\label{fig:pretzel-isotopy-1}
\end{figure}

The relevant tangle is seen in panel (A) of \Cref{fig:pretzel-isotopy-2}, where we have highlighted the fact that the vertical ordering of the bands is preserved after $m$ full twists.  This highlighting is reversed in panel (B), where we have moved one half-twist of the bands into view.  Next, we reflect the red arc and then the blue arc to obtain panel (C).  The four crossings revealed in panel (B) have now been reduced to two, and we obtain panel (D) by pushing these crossings along the band.  Moving another half-twist of the bands into view brings us to panel (E), which bears a superficial resemblance to panel (B); notice, however, the crossing data along the vertical arc.  Adding a twist to the vertical arc in panel (E) produces panel (F), and from here we may iterate the process which transforms (B) into (F). The difference in highlighting between panels (B) and (F) does not prevent us from applying these steps. Notice that we have removed four crossings between the bands at the expense of creating three crossings: two in the bottom-left of the tangle, and one in the vertical arc.  By iterating the process, we transform the $8m$ crossings of panel (A) into $4m$ crossings in the bottom-left of the tangle, plus $2m-1$ crossings along the vertical arc. Applying the isotopy of \Cref{fig:pretzel-isotopy-2} to the tangle highlighted in the last panel of \Cref{fig:pretzel-isotopy-1} produces the first panel of \Cref{fig:pretzel-isotopy-3}.

\begin{figure}
\centering
\begin{tikzpicture}
	\node at (0,0) {\includegraphics[scale=0.25]{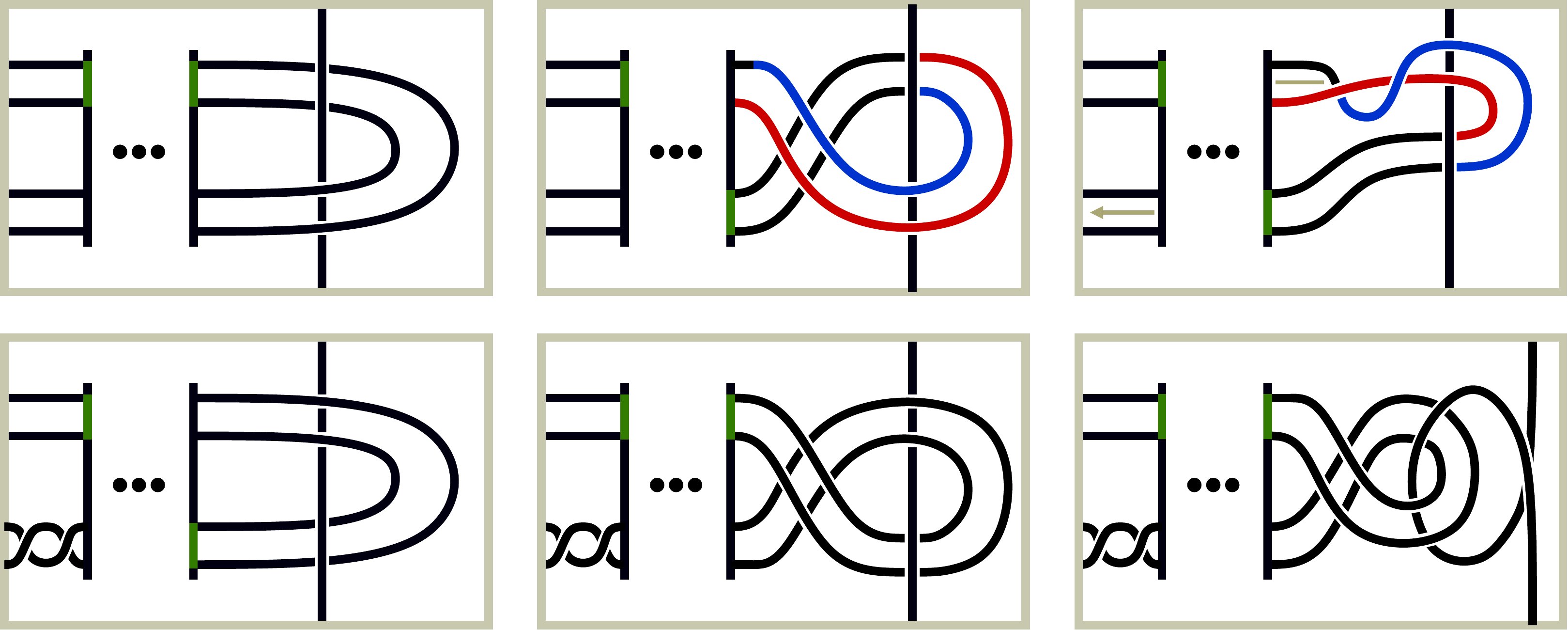}};
	\node at (-5,3.1) {(A)};
	\node at (0,3.1) {(B)};
	\node at (5,3.1) {(C)};
	\node at (-5,-3.1) {(D)};
	\node at (0,-3.1) {(E)};
	\node at (5,-3.1) {(F)};
	\node[font=\footnotesize] at (-5.6, 0.4) {$8m$};
	\node[font=\footnotesize] at (-0.94, 0.4) {$8m-4$};
	\node[font=\footnotesize] at (3.78, 0.4) {$8m-4$};
	\node[font=\footnotesize] at (-5.6, -2.54) {$8m-4$};
	\node[font=\footnotesize] at (-0.94, -2.54) {$8m-8$};
	\node[font=\footnotesize] at (3.78, -2.54) {$8m-8$};
	\end{tikzpicture}
\caption{Unraveling the tangle. The number in each picture represents the number of crossings in the region between the two vertical bars.}
\label{fig:pretzel-isotopy-2}
\end{figure}

Finally, the $4m$ crossings in the bottom-left of the tangle are pushed along the band, as indicated in the first panel of \Cref{fig:pretzel-isotopy-3}.  The red arc in the second panel of that figure is then reflected, decreasing by one the number of crossings in the diagram.  At last, the highlighted crossing in the third panel of \Cref{fig:pretzel-isotopy-3} is pushed along the indicated arc, producing the diagram for $P(-2m-5,-3,3)$ seen in the final panel.

\begin{figure}
\centering
\begin{tikzpicture}
	\node at (0,0) {\includegraphics[scale=0.25]{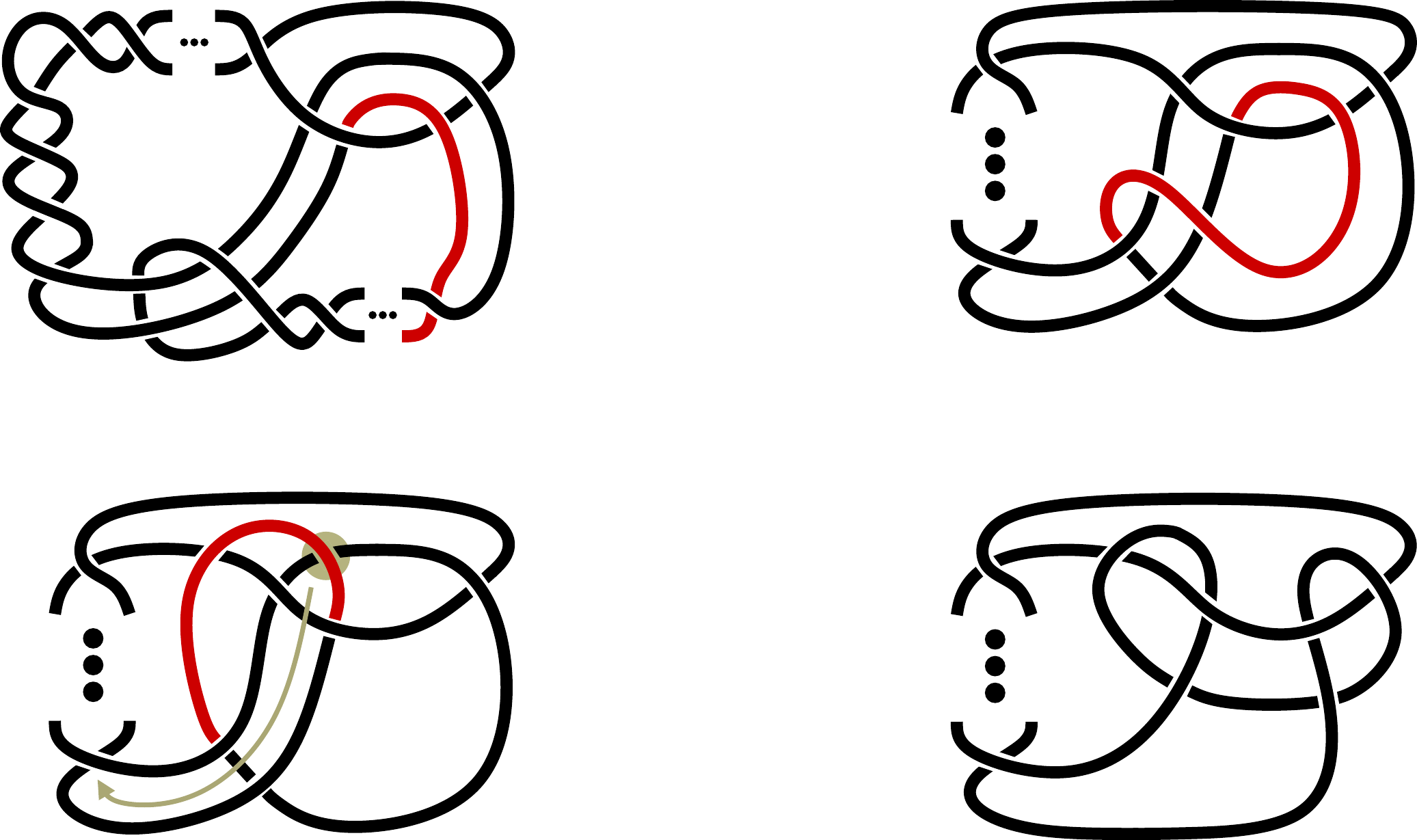}};
	\node[font=\footnotesize] at (-3.2, 2.8) {$2m$};
	\node[font=\footnotesize] at (-2, 0.2) {$4m$};
	\node[font=\footnotesize] at (0.8, 1.55) {$-2m-4\left\{ \right.$};
 	\node[font=\footnotesize] at (-4.8, -1.55) {$-2m-4\left\{ \right.$};
	\node[font=\footnotesize] at (0.8, -1.55) {$-2m-5\left\{ \right.$};
\end{tikzpicture}
\caption{The final steps of our smooth isotopy to $P(-2m-5,-3,3)$.}
\label{fig:pretzel-isotopy-3}
\end{figure}
\end{proof}

From the front projection we see that our Legendrian $P(-2m-5,-3,3)$ has Thurston-Benn\-equin number $-1$ and rotation number $0$.  In \cite{cornwell2016concordance} (c.f. \cite[Appendix A]{etnyre2020legendrian}), Cornwell-Ng-Sivek gave a Legendrian realization of $P(-n,-3,3)$, for each $n\geq 4$, whose Chekanov-Eliashberg DGA is stable tame isomorphic to that of the unknot.  In particular, Cornwell-Ng-Sivek produce a Legendrian $P(-2m-5,-3,3)$ with the same classical invariants as ours, and with ruling polynomial 1.  We conclude, via Rutherford's result cited above, that each of our Legendrian pretzel knots has a unique ruling, and thus that we have infinitely many impostors.

\bibliography{references}

@misc{gowers2011hard,
  author       = {Gowers, Timothy and others},
  title        = {Are there any very hard unknots?},
  howpublished = {MathOverflow},
  year         = {2011},
  url          = {https://mathoverflow.net/questions/53471/are-there-any-very-hard-unknots}
}

@book{stewart2009professor,
  author    = {Stewart, Ian},
  title     = {Professor {S}tewart's {C}abinet of {M}athematical {C}uriosities},
  publisher = {Basic Books},
  year      = {2009},
  pages     = {310},
  isbn      = {9780465013029}
}

@book{adams1994knot,
  title={{The Knot Book: An Elementary Introduction to the Mathematical Theory of Knots}},
  author={Adams, C.C.},
  isbn={9780821886137},
  year={1994},
  publisher={W.H. Freeman and Company}
}

@misc{applebaum2024unknottingnumberhardunknot,
      title={The unknotting number, hard unknot diagrams, and reinforcement learning}, 
      author={Taylor Applebaum and Sam Blackwell and Alex Davies and Thomas Edlich and András Juhász and Marc Lackenby and Nenad Tomašev and Daniel Zheng},
      year={2024},
      note={arXiv:2409.09032} 
}

@article{bennequin1983entrelacements,
  title={Entrelacements et equations de {P}faff},
  author={Bennequin, Daniel},
  journal={Ast\'{e}risque},
  volume={107},
  pages={87--161},
  year={1983}
}

@article{birman1992unlink,
  title={Studying links via closed braids. {V}. {T}he unlink},
  author={Joan S. Birman and William W. Menasco},
  journal={Trans. Amer. Math. Soc. },
  year={1992},
  volume={329},
  pages={585-606},
  url={https://api.semanticscholar.org/CorpusID:15718860}
}

@misc{Breen2024Regularly,
  author = {Breen, Joseph},
  title = {Regularly slice implies once-stably decomposably slice},
  journal = {arXiv preprint},
  note = {arXiv:2410.21031},
  year = {2024}
}

@article{bourgeois2015cobordisms,
author = {Fr{\'e}d{\'e}ric Bourgeois and Joshua M Sabloff and Lisa Traynor},
title = {{Lagrangian cobordisms via generating families: Construction and geography}},
volume = {15},
journal = {Alg. Geom. Topol.},
number = {4},
publisher = {MSP},
pages = {2439--2477},
year = {2015},
doi = {10.2140/agt.2015.15.2439},
URL = {https://doi.org/10.2140/agt.2015.15.2439}
}

@article{burton2024hard,
author = {Benjamin A. Burton and Hsien-Chih Chang and Maarten Löffler and Clément Maria, Arnaud de Mesmay and Saul Schleimer and Eric Sedgwick and Jonathan Spreer},
title = {Hard Diagrams of the Unknot},
journal = {Exp. Math.},
volume = {33},
number = {3},
pages = {482--500},
year = {2024},
publisher = {Taylor \& Francis},
doi = {10.1080/10586458.2022.2161676},


URL = { 
    
        https://doi.org/10.1080/10586458.2022.2161676
    
    

},
eprint = { 
    
        https://doi.org/10.1080/10586458.2022.2161676
    
    

}

}

@article{casals2022infinitely,
author = {Roger Casals and Honghao Gao},
title = {{Infinitely many Lagrangian fillings}},
volume = {195},
journal = {Ann. of Math.},
number = {1},
publisher = {Department of Mathematics of Princeton University},
pages = {207--249},
year = {2022},
doi = {10.4007/annals.2022.195.1.3},
URL = {https://doi.org/10.4007/annals.2022.195.1.3}
}

@article{casals2024steintrace,
author = {Roger Casals and John B. Etnyre and Mark Kegel},
title = {{Stein traces and characterizing slopes}},
volume = {389},
journal = {Math. Ann.},
pages = {1053--1098},
year = {2024},
doi = {10.1007/s00208-023-02662-2},
URL = {https://doi.org/10.1007/s00208-023-02662-2}
}

@article{chantraine2010concordance,
author = {Baptiste Chantraine},
title = {{Lagrangian concordance of Legendrian knots}},
volume = {10},
journal = {Algebr. Geom. Topol.},
number = {1},
publisher = {MSP},
pages = {63--85},
year = {2010},
doi = {10.2140/agt.2010.10.63},
URL = {https://doi.org/10.2140/agt.2010.10.63}
}

@article{chekanov2002dga,
  author = {Yuri Chekanov},
  title = {Differential algebra of {L}egendrian links},
  journal = {Invent. Math.},
  volume = {150},
  number = {3},
  pages = {441--483},
  year = {2002}
}

@article{chekanov2007pushkar,
author = {Chekanov, Yu and Pushkar, P.E.},
year = {2007},
month = {10},
pages = {95--149},
title = {Combinatorics of fronts of {L}egendrian links and the {A}rnol'd 4-conjectures},
volume = {60},
journal = {Russian Math. Surveys},
doi = {10.1070/RM2005v060n01ABEH000808}
}

@article{chongchitmate2013atlas,
author = {Wutichai Chongchitmate and Lenhard Ng},
title = {An Atlas of {L}egendrian Knots},
journal = {Exp. Math.},
volume = {22},
number = {1},
pages = {26--37},
year = {2013},
publisher = {Taylor \& Francis},
doi = {10.1080/10586458.2013.750221},


URL = { 
    
        https://doi.org/10.1080/10586458.2013.750221
    
    

},
eprint = { 
    
        https://doi.org/10.1080/10586458.2013.750221
    
    

}

}

@article{ConwayEtnyreTosun2021Disks,
  author = {Conway, James and Etnyre, John B. and Tosun, B\"{u}lent},
  title = {Symplectic fillings, contact surgeries, and {Lagrangian} disks},
  journal = {Int. Math. Res. Not.},
  volume = {2021},
  number = {8},
  pages = {6020--6050},
  year = {2021},
  doi = {10.1093/imrn/rny291}
}

@article{cornwell2016concordance,
   title={Obstructions to {L}agrangian concordance},
   volume={16},
   ISSN={1472--2747},
   url={http://dx.doi.org/10.2140/agt.2016.16.797},
   DOI={10.2140/agt.2016.16.797},
   number={2},
   journal={Algebr. Geom. Topol.},
   publisher={Mathematical Sciences Publishers},
   author={Cornwell, Christopher and Ng, Lenhard and Sivek, Steven},
   year={2016},
   month=apr, pages={797–824} 
}

@article{ding2001symplectic,
  title={Symplectic fillability of tight contact structures on torus bundles},
  author={Ding, Fan and Geiges, Hansjorg},
  journal={Algebr. {G}eom. {T}opol.},
  volume={1},
  number={1},
  pages={153--172},
  year={2001},
  publisher={Mathematical Sciences Publishers}
}

@article{dynnikov2006arc,
author = {I. A. Dynnikov},
journal = {Fund. Math.},
language = {eng},
number = {1},
pages = {29-76},
title = {Arc-presentations of links: {M}onotonic simplification},
url = {http://eudml.org/doc/283163},
volume = {190},
year = {2006},
}

@article{ekholm2012exactcobordisms,
author = {Ekholm, Tobias and Honda, Ko and Kálmán, Tamás},
year = {2012},
month = {12},
pages = {2627--2689},
title = {Legendrian knots and exact {L}agrangian cobordisms},
volume = {18},
journal = {J. Eur. Math. Soc. (JEMS)},
doi = {10.4171/JEMS/650}
}

@article{eliashberg1987wave,
  title={A theorem on the structure of wave fronts and its application in
  symplectic topology},
  author={Eliashberg, Yakov},
  journal={Funct. Anal. Its. Appl.},
  volume={21},
  pages={227--232},
  year={1987}
}

@article{eliashberg1992contact,
  title={Contact 3-manifolds twenty years since {J}. {M}artinet’s work},
  author={Eliashberg, Yakov},
  journal={Ann. Inst. Fourier (Grenoble)},
  volume={42},
  number={1-2},
  pages={165--192},
  year={1992}
}

@article{eliashberg2018flexiblelagrangians,
    author = {Eliashberg, Yakov and Ganatra, Sheel and Lazarev, Oleg},
    title = "{Flexible Lagrangians}",
    journal = {International Mathematics Research Notices},
    volume = {2020},
    number = {8},
    pages = {2408-2435},
    year = {2018},
    month = {05},
    issn = {1073-7928},
    doi = {10.1093/imrn/rny078},
    url = {https://doi.org/10.1093/imrn/rny078},
    eprint = {https://academic.oup.com/imrn/article-pdf/2020/8/2408/33136983/rny078.pdf},
}

@article{etnyre2001knots,
author = {John B. 
 Etnyre and Ko 
 Honda},
title = {{Knots and Contact Geometry I: Torus Knots and the Figure Eight Knot}},
volume = {1},
journal = {J. Symplectic Geom.},
number = {1},
publisher = {International Press of Boston},
pages = {63 -- 120},
year = {2001},
}

@incollection{etnyre2005surveyknots,
    author = {Etnyre, John B.},
    title = {Legendrian and transversal knots},
    booktitle = {{Handbook of Knot Theory}},
    editor = {Menasco, William and Thistlewaite, Morwen},
    publisher = {Elsevier B.V.},
    year = {2005},
    isbn = {0444514520},
    type = {Survey article},
    pages = {105--186}
}

@article{etnyre2020legendrian,
  author  = {Etnyre, John B. and Ng, Lenhard},
  title   = {{L}egendrian contact homology in {$\mathbb{R}^3$}},
  journal = {Surv. Differ. Geom.},
  volume  = {25},
  pages   = {103--161},
  year    = {2020},
  doi     = {10.48550/arXiv.1811.10966}
}

@article{fuchs2003rulings,
title = {Chekanov–{E}liashberg invariant of {L}egendrian knots: existence of augmentations},
journal = {J. Geom. Phys.},
volume = {47},
number = {1},
pages = {43--65},
year = {2003},
issn = {0393-0440},
doi = {https://doi.org/10.1016/S0393-0440(01)00013-4},
url = {https://www.sciencedirect.com/science/article/pii/S0393044001000134},
author = {Dmitry Fuchs}
}

@article{fuchs2004invariants,
  title={Invariants of {L}egendrian Knots and Decompositions of Front Diagrams},
  author={Dmitry Fuchs and Tigran Ishkhanov},
  journal={Mosc. Math. J.},
  year={2004},
  volume={4},
  number={3},
  pages={707--717},
  url={https://api.semanticscholar.org/CorpusID:17173687}
}

@article{goeritz1934knot,
    author = {Lebrecht Goeritz},
    title = {Bemerkungen zur knotentheorie},
    journal = {Abh. Math. Sem. Univ. Hamburg},
    year = {1934},
    number = {1},
    volume = {10},
    pages = {201--210}
}

@article{gompf1998handlebody,
  title={Handlebody construction of {S}tein surfaces},
  author={Gompf, Robert E},
  journal={Ann. of Math.},
  pages={619--693},
  year={1998},
  publisher={JSTOR}
}

@article{hass2001reidemeister,
  author = {Hass, Joel and Lagarias, Jeffrey C.},
  title = {The Number of {R}eidemeister Moves Needed for Unknotting},
  journal = {J. Amer. Math. Soc.},
  volume = {14},
  number = {2},
  pages = {399--428},
  year = {2001},
  doi = {10.1090/S0894-0347-01-00358-7}
}

@article {hass2010quadratic,
    AUTHOR = {Hass, Joel and Nowik, Tahl},
     TITLE = {Unknot diagrams requiring a quadratic number of {R}eidemeister
              moves to untangle},
   JOURNAL = {Discrete Comput. Geom.},
  FJOURNAL = {Discrete \& Computational Geometry. An International Journal
              of Mathematics and Computer Science},
    VOLUME = {44},
      YEAR = {2010},
    NUMBER = {1},
     PAGES = {91--95},
      ISSN = {0179-5376,1432-0444},
   MRCLASS = {57M25 (68U05)},
  MRNUMBER = {2639820},
MRREVIEWER = {Peiyi\ Zhao},
       DOI = {10.1007/s00454-009-9156-4},
       URL = {https://doi.org/10.1007/s00454-009-9156-4},
}

@article{henrich2014unknotting,
author = {Allison Henrich and Louis H. Kauffman},
title = {Unknotting Unknots},
journal = {Amer. Math. Monthly},
volume = {121},
number = {5},
pages = {379--390},
year = {2014},
publisher = {Taylor \& Francis},
doi = {10.4169/amer.math.monthly.121.05.379},


URL = { 
    
    
        https://www.tandfonline.com/doi/abs/10.4169/amer.math.monthly.121.05.379
    

},
eprint = { 
    
    
        https://www.tandfonline.com/doi/pdf/10.4169/amer.math.monthly.121.05.379
    

}

}

@inbook{kauffman2011collapsing,
author = { Louis H.   Kauffman  and  Sofia   Lambropoulou },
title = {Hard unknots and collapsing tangles},
booktitle = {Introductory Lectures on Knot Theory},
publisher={World Scientific Publishing},
pages = {187--247},
year = {2011},
doi = {10.1142/9789814313001_0009},
URL = {https://www.worldscientific.com/doi/abs/10.1142/9789814313001_0009},
eprint = {https://www.worldscientific.com/doi/pdf/10.1142/9789814313001_0009}
}

@article{lackenby2015polynomial,
  author = {Lackenby, Marc},
  title = {A Polynomial Upper Bound on {R}eidemeister Moves},
  journal = {Ann. of Math.},
  volume = {182},
  number = {2},
  pages = {491--564},
  year = {2015},
  doi = {10.4007/annals.2015.182.2.3}
}

@article{leverson2014augmentations,
  title={Augmentations and Rulings of {L}egendrian Links},
  author={Caitlin Leverson},
  journal={J. Symplectic Geom.},
  year={2014},
  volume={14},
  pages={1089--1143},
  url={https://api.semanticscholar.org/CorpusID:119688269}
}

@misc{lunel2024harddiagramssplitlinks,
      title={Hard diagrams of split links}, 
      author={Corentin Lunel and Arnaud de Mesmay and Jonathan Spreer},
      year={2024},
        note={arXiv:2412.03372},
      eprint={2412.03372},
      archivePrefix={arXiv},
      primaryClass={math.GT},
      url={https://arxiv.org/abs/2412.03372}, 
}

@incollection {mohnke2001legendrian,
    AUTHOR = {Mohnke, Klaus},
     TITLE = {Legendrian links of topological unknots},
 BOOKTITLE = {Topology, geometry, and algebra: interactions and new
              directions ({S}tanford, {CA}, 1999)},
    SERIES = {Contemp. Math.},
    VOLUME = {279},
     PAGES = {209--211},
 PUBLISHER = {Amer. Math. Soc., Providence, RI},
      YEAR = {2001},
      ISBN = {0-8218-2063-X},
   MRCLASS = {57R17 (57M25 57M27 57N37)},
  MRNUMBER = {1850749},
MRREVIEWER = {Ximin\ Liu},
       DOI = {10.1090/conm/279/04562},
       URL = {https://doi.org/10.1090/conm/279/04562},
}

@article {petronio2016unknots,
    AUTHOR = {Petronio, Carlo and Zanellati, Adolfo},
     TITLE = {Algorithmic simplification of knot diagrams: new moves and
              experiments},
   JOURNAL = {J. Knot Theory Ramifications},
  FJOURNAL = {Journal of Knot Theory and its Ramifications},
    VOLUME = {25},
      YEAR = {2016},
    NUMBER = {10},
     PAGES = {1650059, 30},
      ISSN = {0218-2165,1793-6527},
   MRCLASS = {57M25},
  MRNUMBER = {3548477},
       DOI = {10.1142/S0218216516500590},
       URL = {https://doi.org/10.1142/S0218216516500590},
}

@article{rutherford2006thurston,
  title={{The Thurston-Bennequin number, Kauffman polynomial, and ruling invariants of a Legendrian link: The Fuchs conjecture and beyond}},
  author={Rutherford, Dan},
  journal={Int. {M}ath. {R}es. {N}ot. IMRN}, 
  pages ={Art. ID 78591, 15},
  volume={2006},
  number={9},
  year={2006},
  publisher={OUP}
}

@ARTICLE{sabloff2005augmentations,
  author={Sabloff, Joshua M.},
  journal={Int. {M}ath. {R}es. {N}ot. IMRN}, 
  title={Augmentations and rulings of {L}egendrian Knots}, 
  year={2005},
  volume={2005},
  number={19},
  pages={1157--1180},
  doi={10.1155/IMRN.2005.1157}}
\bibliographystyle{amsalpha}

\end{document}